\theoremstyle{plain}
\newtheorem{thm}{Theorem}[section]
\newtheorem*{thm*}{Theorem}
\newtheorem*{cor*}{Corollary}
\newtheorem{prop}[thm]{Proposition}
\newtheorem{lem}[thm]{Lemma}
\newtheorem{cor}[thm]{Corollary}
\newtheorem{claim}{Claim}
\newtheorem*{claim*}{Claim}
\theoremstyle{definition}
\newtheorem{defn}[thm]{Definition}
\newtheorem{ex}[thm]{Example}
\newtheorem{rem}[thm]{Remark}
\theoremstyle{remark}
\numberwithin{equation}{thm}
\def\Ker{\mathrm{Ker}}
\def\Im{\mathrm{Im}}
\def\rank{\mathrm{rank}}
\def\a{\mathfrak a}
\def\e{\mathrm{e}}
\def\m{\mathfrak m}
\def\n{\mathfrak n}
\def\q{\mathfrak q}
\def\Z{\Bbb Z}
\newcommand{\rmG}{\mathrm{G}}
\newcommand{\rmR}{\mathrm{R}}
\newcommand{\rmS}{\mathrm{S}}
\def\depth{\operatorname{depth}}
\def\Ann{\mathrm{Ann}}
\def\Ass{\mathrm{Ass}}
\newcommand{\mapdown}[1]{\Big\downarrow
\llap{$\vcenter{{$\scriptstyle#1\,$}}$ }}
\begin{document}

\setlength{\baselineskip}{12pt}
%%%%%%%%%%%%%%%%%%%%%%%%%%%%%%%%%%%%%%%%%%%%%%%%%%%%%%%%%%%%%
%%%%%%%%%%%%%%%%%%%%%%%%%%%%%%%%%%%%%%%%%%%%%%%%%%
\title{The reduction number of stretched ideals}
\pagestyle{plain}
\author{Kazuho Ozeki}
\address{Department of Mathematical Sciences, Faculty of Science, Yamaguchi University, 1677-1 Yoshida, Yamaguchi 753-8512, Japan}
\email{ozeki@yamaguchi-u.ac.jp}
%\dedicatory{}
\thanks{{\it 2020 Mathematics Subject Classification:}
Primary:13A30, Secondary: 13H10, 13D40.
\endgraf
{\it Key words and phrases:}
stretched local ring, stretched ideal, Cohen-Macaulay local ring, associated graded ring, Hilbert function, Hilbert coefficient
\endgraf
The author was partially supported by Grant-in-Aid for Scientific Researches (C) in Japan (18K03241).}
\maketitle
%%%%%%%%%%%%%%%%%%%%%%%%%%%%%%%%%%%%%%%%%%%%%%%%%%%%%%%%%%%%%
%%%%%%%%%%%%%%%%%%%%%%%%%%%%%%%%%%%%%%%%%%%%%%%%%%%%%%%%%%%%%
\begin{abstract}
The homological property of the associated graded ring of an ideal is an important problem in commutative algebra and algebraic geometry.
In this paper we explore the almost Cohen-Macaulayness of the associated graded ring of stretched $\m$-primary ideals in the case where the reduction number attains almost minimal value in a Cohen-Macaulay local ring $(A,\m)$.
As an application, we present complete descriptions of the associated graded ring of stretched $\m$-primary ideals with small reduction number.
\end{abstract}

%{\footnotesize
%\tableofcontents
%}
%%%%%%%%%%%%%%%%%%%%%%%%%%%%%%%%%%%%%%%%%%%%%%%%%%%%%%%%%%%%%
%%%%%%%%%%%%%%%%%%%%%%%%%%%%%%%%%%%%%%%%%%%%%%%%%%%%%%%%%%%%%
%%%%%%%%%%%%%%%%%%%%%%%%%%%%%%%%%%%%%%%%%%%%%%%%%%%%%%%%%%%%%
%%%%%%%%%%%%%%%%%%%%%%%%%%%%%%%%%%%%%%%%%%%%%%%%%%%%%%%%%%%%%
%%%%%%%%%%%%%%%%%%%%%%%%%%%%%%%%%%%%%%%%%%%%%%%%%%%%%%%%%%%%%
%%%%%%%%%%%%%%%%%%%%%%%%%%%%%%%%%%%%%%%%%%%%%%%%%%%%%%%%%%%%%

\section{Introduction}

Throughout this paper, let $A$ be a Cohen-Macaulay local ring with maximal ideal $\m$ and $d=\dim A>0.$ 
For simplicity, we may assume the residue class field $A/\m$ is infinite.
Let $I$ be an $\m$-primary ideal in $A$ and let
$$ R=\rmR(I) := A[I t] \ \ \ \subseteq A[t] ~~\operatorname{and}~~ \ R'= \rmR'(I):= A[It,t^{-1}] \ \ \subseteq A[t,t^{-1}]$$
denote, respectively, the Rees algebra and the extended Rees algebra of $I$. 
Let
$$ G=\rmG(I):= R'/t^{-1}R' \cong \bigoplus_{n \geq 0}I^n/I^{n+1}$$
denote the associated graded ring of $I$.
Let $\ell_A(N)$ denote, for an $A$-module $N$, the length of $N$.

Let $Q=(a_1,a_2,\cdots,a_d) \subseteq I$ be a parameter ideal in $A$ which forms a reduction of $I$.
We set 
$$n_{I}=n_{Q}(I):=\min\{n \geq 0 \ | \ I^{n+1} \subseteq Q \} \ \ \ \mbox{and} \ \ \ r_{I}=r_Q(I):=\min\{n \geq 0 \ | \ I^{n+1}=QI^n\},$$
respectively, denote the index of nilpotency and the reduction number of $I$ with respect to $Q$. 
Then it is easy to see that the inequality ${r_I} \geq {n_I}$ always holds true.

\vspace{2mm}

In 1967 Abhyankar gave the inequality $\e_0(\m) \geq \mu(\m)-d+1$, where $\e_0(\m)$ and $\mu(\m)$ denote the multiplicity and the embedding dimension of $A$ respectively.
Sally investigated Cohen-Macaulay local rings with $\e_0(\m)=\mu(\m)-d+1$ $($resp. $\e_0(\m)=\mu(\m)-d+2)$ which have been called {\it rings of minimal multiplicity} $($resp. {\it rings of almost minimal multiplicity}$)$.
In the case of minimal multiplicity, she showed that the associated graded ring ${\rm G}(\m)$ of the maximal ideal $\m$ is always Cohen-Macaulay $([18])$.
In [20], she proved that the associated graded ring ${\rm G}(\m)$ is Cohen-Macaulay in the case where the rings having almost minimal multiplicity and the type of the ring doesn't attend to maximal value, while she found examples of Cohen-Macaulay local rings whose associated graded ring ${\rm G}(\m)$ is not Cohen-Macaulay.
Based on these considerations, Sally expected that the associated graded ring ${\rm G}(\m)$ is almost Cohen-Macaulay, i.e. the depth is at least $d-1$, if the base local ring $A$ has almost minimal multiplicity.
This {\it Sally's conjecture} was solved by Rossi-Valla [15], and independently Wang [23].
Thereafter, in [16], the almost Cohen-Macaulay property of the associated graded rings with $\e_0(\m)=\mu(\m)-d+3$ was examined by Rossi and Valla.

The notion of {\it stretched} Cohen-Macaulay local rings was introduced by J. Sally to extend the rings of minimal or almost minimal multiplicity.
We say that the ring $A$ is {\it stretched} if $\ell_A(\m^2 + Q/\m^3 +Q)=1$ holds true, i.e. the ideal $(\m/Q)^2$ is principal, for some parameter ideal $Q$ in $A$ which forms a reduction of $\m$ $([16])$.
We note here that this condition depends on the choice of a reduction $Q$ $($see [17, Example 2.3]$)$.
She showed that the equality ${{r}}_Q(\m)={{n}}_Q(\m)$ holds true if and only if the associated graded ring ${\rmG}(\m)$ of $\m$ is Cohen-Macaulay in the case where the base local ring $A$ is stretched.

In 2001, Rossi and Valla [17] gave the notion of stretched $\m$-primary ideals.
We say that the $\m$-primary ideal $I$ is stretched if the following two conditions
\begin{itemize}
\item[$(1)$] $Q \cap I^2=QI$ and
\item[$(2)$] $\ell_A(I^2 + Q/I^3 + Q)=1$
\end{itemize}
hold true for some parameter ideal $Q$ in $A$ which forms a reduction of $I$.
We notice that the first condition is naturally satisfied if $I=\m$ so that this extends the classical definition of stretched local rings given in [19].

Throughout this paper, a stretched $\m$-primary ideal $I$ will come always equipped with a parameter ideal $Q$ in $A$ which forms a reduction of $I$ such that $Q \cap I^2=QI$ and $\ell_A(I^2+Q/I^3+Q)=1$.
Rossi and Valla [17] showed that the equality ${{r}_I}={{n_I}}$ holds true if and only if the associated graded ring $G$ is Cohen-Macaulay in the case where $I$ is stretched.
Thus stretched $\m$-primary ideals whose reduction number attends to minimal value enjoy nice properties.
They also showed that the associated graded ring $G$ is almost Cohen-Macaulay if $I$ is stretched and $I^{n_I+1} \subseteq QI^{n_I-1}$ holds true $($[17, Proposition 3.1]$)$. 
Recently, Mantelo and Xie [11] introduced the notion of $j$-stretched ideals and generalized the results of Sally and Rossi-Valla.
An interesting result on the depth of fiber cones of stretched $\m$-primary ideals is presented in [10].

The Hilbert coefficients of $\m$-primary ideal in a local ring are important numerical invariants associated to an ideal.
In this paper we will also study the Hilbert coefficients of stretched $\m$-primary ideals.
As is well known, for a given $\m$-primary ideal $I$, there exist integers $\{\e_k(I)\}_{0 \leq k \leq d}$ such that the equality
$$\ell_A(A/I^{n+1})={\e}_0(I)\binom{n+d}{d}-{\e_1}(I)\binom{n+d-1}{d-1}+\cdots+(-1)^d{\e}_d(I)$$
holds true for all integers $n \gg 0$.
For each $0 \leq k \leq d$, $\e_k(I)$ is called the $k$-th {\it Hilbert coefficient} of $I$. 
We set the power series $$HS_I(z)=\sum_{n =0}^{\infty}\ell_A(I^n/I^{n+1})z^n$$ and call it the Hilbert series of $I$.
It is also well known that this series is rational and that there exists a polynomial $h_I(z)$ with integer coefficients such that $h_I(1) \neq 0$ and  
$$HS_I(z)=\frac{h_I(z)}{(1-z)^d}.$$

The purpose of this paper is to explore the almost Cohen-Macaulayness of associated graded ring of stretched $\m$-primary ideal $I$ in the case where the reduction number attains almost minimal value.

The main result of this paper is the following.

\begin{thm}
Suppose that $I$ is stretched and assume that ${r}_I={n}_I+1$ is satisfied.
Then the following assertions hold true where $s=\min\{n  \geq 1 \ | \ Q \cap I^{n+1} \neq QI^n\}$.
\begin{itemize}
\item[$(1)$] $\e_1(I)=\e_0(I)-\ell_A(A/I)+\binom{n_I+1}{2}-s+1$,
\item[$(2)$] $\e_k(I) = \binom{n_I+2}{k+1}-\binom{s}{k}$ for all $2 \leq k \leq d$,
\item[$(3)$] $\ell_A(A/I^{n+1})=\sum_{k=0}^d(-1)^k\e_k(I)\binom{n+d-k}{d-k}$ for all $n \geq \max\{0, n_I-d+1\}$, and
\item[$(4)$] the Hilbert series $HS_I(z)$ of $I$ is given by {\small { $$ HS_I(z)=\frac{\ell_A(A/I)+\{\e_0(I)-\ell_A(A/I)-n_I+1\}z+\sum_{2 \leq i \leq n_I+1, i \neq s}z^i}{(1-z)^d},$$}} 
\item[$(5)$] $\depth G=d-1$.
\end{itemize}
\end{thm}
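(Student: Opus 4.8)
The plan is to bound $\depth G$ from both sides. For the upper bound, note that $I$ is stretched with $r_I=n_I+1\neq n_I$, so the Rossi--Valla criterion recalled above shows that $G$ is \emph{not} Cohen--Macaulay; since $\dim G=d$, this forces $\depth G\le d-1$. Hence the real content of (5) is the reverse inequality $\depth G\ge d-1$, that is, the almost Cohen--Macaulayness of $G$.

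I would prove this by induction on $d$, reducing modulo one superficial element at a time. As $A/\m$ is infinite, choose the minimal reduction $Q=(a_1,\dots,a_d)$ to be generated by a superficial sequence, put $\bar A=A/(a_1)$ and $\bar I=I\bar A$, and note that $\bar I$ is again stretched over $\bar A$ with reduction $\bar Q=Q\bar A$, carrying over the invariants $n_I$, $r_I$ and $s$; thus $\bar I$ satisfies the hypotheses of the theorem in dimension $d-1$. The crux is the single claim that, for a sufficiently general $a_1$, the initial form $a_1^{\ast}\in G_1$ is a nonzerodivisor on $G$ --- equivalently, by the Valabrega--Valla criterion, that
$$(a_1)\cap I^{n+1}=a_1I^{n}\qquad\text{for all }n\ge 0.$$
Granting the claim we get $G/(a_1^{\ast})\cong\rmG(\bar I)$ and $\depth G=\depth\rmG(\bar I)+1$; the induction hypothesis yields $\depth\rmG(\bar I)\ge(d-1)-1=d-2$, so $\depth G\ge d-1$, with the base case $d=1$ being the trivial estimate $\depth G\ge 0$.

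Everything therefore reduces to the displayed identity, which is the step I expect to be the main obstacle. The inclusion $a_1I^{n}\subseteq(a_1)\cap I^{n+1}$ is automatic, so it suffices to prove the reverse, and the natural route is a length count: since $a_1$ is a nonzerodivisor one has $(a_1)\cap I^{n+1}=a_1\,(I^{n+1}\colon a_1)$, and the identity amounts to $I^{n+1}\colon a_1=I^{n}$, which I would verify by comparing $\ell_A(I^{n+1}\colon a_1)$ with $\ell_A(I^{n})$ through the explicit formulas for $\ell_A(A/I^{n+1})$ and the Hilbert series established in parts (3) and (4). For $n\gg 0$ the equality is automatic, so the difficulty is concentrated in the finitely many small degrees, where the precise stretched shape of the powers of $I$ must be used. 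Here the case $s=n_I$ is the easiest, since then $I^{n_I+1}\subseteq QI^{n_I-1}$ and one may simply invoke [17, Proposition 3.1]; the genuinely new work lies in the range $2\le s<n_I$, in which that inclusion can fail and the identity has to be extracted directly from the length data provided by (1)--(4).
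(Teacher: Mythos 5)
Your proposal addresses only assertion (5) and, at the decisive step, is circular. The identity $(a_1)\cap I^{n+1}=a_1I^{n}$ is to be verified ``through the explicit formulas for $\ell_A(A/I^{n+1})$ and the Hilbert series established in parts (3) and (4)'' --- but (1)--(4) are assertions of the theorem, not available inputs, and you give no independent route to them. This is not a cosmetic defect: by [9, Theorem 4.7] one has $\e_1(I)\leq \sum_{n\geq 0}\ell_A(I^{n+1}/QI^n)$ with equality if and only if $\depth G\geq d-1$, and by Corollary 2.4 this upper bound equals $\e_0(I)-\ell_A(A/I)+\binom{n_I+1}{2}-s+1$ under the hypotheses; hence assertion (1) is \emph{equivalent} to $\depth G\geq d-1$, i.e.\ to assertion (5) (the reverse inequality $\depth G\leq d-1$ being, as you note, the Rossi--Valla criterion of Corollary 4.2, since $r_I\neq n_I$). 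So invoking the Hilbert-function data to prove the depth statement begs the question; and conversely, if (1)--(4) were granted, (5) would follow in two lines from the equality case of [9, Theorem 4.7], making your entire induction and Valabrega--Valla apparatus superfluous. The actual content of the theorem is supplied in the paper by a different mechanism that your proposal has no substitute for: the kernels $K^{(m)}$ arising from the Vaz Pinto filtration of the Sally module are shown (Proposition 4.3) to vanish except for a single $m\in\{-1,0\}$, the surviving one being a torsion-free rank-one module over $B=T/\m T$, hence a shifted graded ideal of $B$; playing the resulting lower bound for $\e_1(I)$ against the Huckaba--Marley upper bound pins it down to $B(-s+m+1)$ and yields (1) and (5) simultaneously, after which (2)--(4) follow from Proposition 4.1.

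There are further unproven claims sitting exactly where the difficulty lives. You assert that $\bar{I}=I\bar{A}$ ``carries over the invariants $n_I$, $r_I$ and $s$.'' Stretchedness and $n_{\bar{I}}=n_I$ do descend, but $r_{\bar{I}}=r_I$ and the persistence of $s$ are not automatic --- the reduction number can a priori drop modulo $a_1$ --- and proving persistence for a general superficial element is essentially intertwined with proving that $a_1^*$ is $G$-regular, the very claim at issue. Moreover, the inductive scheme conceals that all of the work is at the bottom: for $d\geq 3$, once $\depth \rmG(\bar{I})\geq d-2\geq 1$ is known, Sally's machine already gives $a_1^*$ regular and $\depth G=\depth \rmG(\bar{I})+1$ with no length count needed; whereas for $d=2$ the inductive hypothesis ($\depth \rmG(\bar{I})\geq 0$) says nothing, and the Valabrega--Valla identity must be proved bare-handed --- precisely the case in which your only proposed tool is the circular comparison above. (Your remark that $s=n_I$ is handled by [17, Proposition 3.1] is correct, since then $I^{n_I+1}=Q\cap I^{n_I+1}\subseteq Q\cap I^{n_I}=QI^{n_I-1}$, but this covers a single value of $s$.)
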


\begin{cor}
Suppose that $I$ is stretched and assume that $I^{n_I+2}=QI^{n_I+1}$ $($i.e. $r_I \leq n_I+1)$, then $G$ is almost Cohen-Macaulay $($i.e. $\depth G \geq d-1)$.
\end{cor}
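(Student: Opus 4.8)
The plan is to exploit the elementary dichotomy forced by the hypothesis. Since the inequality $r_I \geq n_I$ always holds, the assumption $I^{n_I+2}=QI^{n_I+1}$ — which says precisely that $r_I \leq n_I+1$ — leaves exactly two possibilities: either $r_I=n_I$ or $r_I=n_I+1$. I would treat these two cases separately and establish $\depth G \geq d-1$ in each, after which the conclusion is immediate.

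In the case $r_I=n_I$, I would invoke the criterion of Rossi and Valla recalled in the introduction: for a stretched $\m$-primary ideal $I$, the equality $r_I=n_I$ holds if and only if the associated graded ring $G$ is Cohen-Macaulay. Hence $G$ is Cohen-Macaulay, so $\depth G = \dim G = d \geq d-1$, and in particular $G$ is almost Cohen-Macaulay. In the remaining case $r_I=n_I+1$, the standing hypotheses of Theorem 1.1 are met verbatim ($I$ is stretched and $r_I=n_I+1$), so part $(5)$ of that theorem gives $\depth G=d-1$ directly, which again yields $\depth G \geq d-1$.

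Combining the two cases shows that $\depth G \geq d-1$ holds unconditionally under the assumption $r_I \leq n_I+1$, so $G$ is almost Cohen-Macaulay, as claimed. The argument is essentially a case split, and no substantial obstacle arises, since the genuine work has already been carried out in Theorem 1.1 and in the cited Cohen-Macaulayness criterion. The only point requiring care is to remember that the boundary value $r_I=n_I$ is \emph{not} covered by Theorem 1.1 (whose standing assumption is $r_I=n_I+1$), and must therefore be dispatched separately by the characterization of when $G$ is Cohen-Macaulay.
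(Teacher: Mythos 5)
Your proof is correct and is exactly the argument the paper intends: the corollary is an immediate consequence of the dichotomy $r_I\in\{n_I,\,n_I+1\}$, handled by the Rossi--Valla criterion (restated as Corollary 4.2 in the paper) in the first case and by Theorem 1.1 $(5)$ in the second. Your closing caution that the boundary case $r_I=n_I$ falls outside the hypotheses of Theorem 1.1 and needs the separate Cohen--Macaulayness criterion is precisely the right observation.
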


As an application, we give almost Cohen-Macaulayness of the associated graded ring of stretched $\m$-primary ideals with reduction number at most four as follows.

\begin{cor}[Corollary 5.4]
Suppose that $I$ is stretched and assume that $I^5=QI^4$ $($i.e. $r_I \leq 4)$, then $G$ is almost Cohen-Macaulay $($i.e. $\depth G \geq d-1)$.
\end{cor}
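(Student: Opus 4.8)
The plan is to deduce the statement from two results already available in the excerpt: the preceding Corollary, which yields almost Cohen--Macaulayness as soon as $r_I \le n_I+1$, and Rossi--Valla's [17, Proposition 3.1], which yields it as soon as $I^{n_I+1}\subseteq QI^{n_I-1}$. The hypothesis $r_I\le 4$ will be used only to force the pair $(n_I,r_I)$ into the range covered by one of these two criteria, so I would run the argument as a dichotomy on the gap $r_I-n_I$.

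If $r_I\le n_I+1$, then $I^{n_I+2}=QI^{n_I+1}$ and the preceding Corollary applies directly, giving $\depth G\ge d-1$; this already absorbs the trivial case $r_I=0$, where $I=Q$. The substantive case is $r_I\ge n_I+2$. Here the bound $r_I\le 4$ forces $n_I\le r_I-2\le 2$, while $r_I\ge 2$ forces $I\ne Q$ and hence $n_I\ge 1$; thus $n_I\in\{1,2\}$. In this range I would verify the hypothesis of [17, Proposition 3.1] by hand: since $I^{n_I+1}\subseteq Q$ by definition of $n_I$ and $I^{n_I+1}\subseteq I^2$ because $n_I\ge 1$, the stretched condition $Q\cap I^2=QI$ gives $I^{n_I+1}\subseteq Q\cap I^2=QI$; and since $n_I\le 2$ we have $QI=QI^{1}\subseteq QI^{n_I-1}$, so $I^{n_I+1}\subseteq QI^{n_I-1}$ and [17, Proposition 3.1] delivers $\depth G\ge d-1$.

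The one point that requires care --- and the only place the numerical hypothesis really enters --- is the bookkeeping that $r_I\ge n_I+2$ together with $r_I\le 4$ pins $n_I$ into $\{1,2\}$, which is exactly the range where the stretched identity $Q\cap I^2=QI$ can be parlayed into the containment $I^{n_I+1}\subseteq QI^{n_I-1}$. No new depth estimate for $G$ is needed; beyond this arithmetic the proof is a short chain of ideal containments resting on the two cited results.
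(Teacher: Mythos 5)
Your proof is correct, but it handles the one genuinely hard case by a different route than the paper. The paper deduces Corollary 5.4 from Remark 5.1, Proposition 5.2 and Theorem 5.3, which split according to $\Lambda$: all subcases with $r_I\le n_I+1$ rest on Corollary 4.2 and Theorem 1.1 (this is your first case, packaged as Corollary 1.2), while the remaining subcase $\Lambda=\{2,3\}$ (i.e.\ $r_I=4$, $n_I=2$) is settled by the Sally-conjecture results [15, Theorem 2.1] and [23, Theorem 3.1] together with [7, Corollary 2.11], the point there being that $\alpha_1=\ell_A(I^2/QI)=n_I-1=1$. You instead dispose of this case by verifying the hypothesis of Rossi--Valla's [17, Proposition 3.1]: when $r_I\ge n_I+2$ and $r_I\le 4$ you get $n_I=2$ (for a stretched ideal $n_I\ge 2$, so in fact only $n_I=2$ occurs), and then $I^{n_I+1}=I^3\subseteq Q\cap I^2=QI=QI^{n_I-1}$, using the definition of $n_I$ and the first stretched condition $Q\cap I^2=QI$. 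That chain of containments is valid, and [17, Proposition 3.1], as cited in the paper's introduction, then gives $\depth G\ge d-1$. Your route is more economical --- it needs nothing from Section 5 beyond Corollary 1.2 and actually shows the stronger statement that \emph{every} stretched ideal with $n_I=2$ has almost Cohen--Macaulay associated graded ring, regardless of $r_I$ --- whereas the paper's route through Proposition 5.2 and Theorem 5.3 simultaneously produces the finer numerical data (the Hilbert coefficients $\e_k(I)$, the value of $\Lambda$, and the exact equality $\depth G=d-1$), which a direct appeal to [17, Proposition 3.1] does not yield. Two harmless slips: the case $r_I=0$ (i.e.\ $I=Q$) that you mention cannot occur, since $\ell_A(I^2+Q/I^3+Q)=1$ forces $I^2\not\subseteq Q$ and hence $n_I,r_I\ge 2$ (Remark 5.1); for the same reason $n_I=1$ cannot occur in your second case. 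Both subcases are covered by your containments anyway, so correctness is unaffected.
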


Let us briefly explain how this paper is organized.
We shall prove Theorem 1.1 in Section 4.
In section 2 we will introduce some auxiliary results on the stretched $\m$-primary ideals.
In Section 3 we will introduce some techniques of the Sally module $S={\rmS}_Q(I)=I{\rmR}(I)/I{\rm R}(Q)$ for computing to the Hilbert coefficients of $I$, some of them are stated in a general setting.  
In Section 5 we shall discuss some applications of Theorem 1.1. 
We shall explore the stretched $\m$-primary ideals $I$ with small reduction number $r_{I}$.
In Section 6, examples of one-dimensional stretched Cohen-Macaulay local rings $(A, \m)$ with arbitrarily high reduction number $r_{\m}$ against the index of nilpotency $n_{\m}$ will be given $($Theorem 6.6$)$.
We will also construct a example of stretched maximal ideal which satisfy conditions of Theorem 1.1 $($Corollary 6.7$)$.

%%%%%%%%%%%%%%%%%%%%%%%%%%%%%%%%%%%%%%%%%%%%%%%%%%%
%%%%%%%%%%%%%%%%%%%%%%%%%%%%%%%%%%%%%%%%%%%%%%%%%%%
%%%%%%%%%%%%%%%%%%%%%%%%%%%%%%%%%%%%%%%%%%%%%%%%%%%
%%%%%%%%%%%%% Section 2 %%%%%%%%%%%%%%%%%%%%%%%%%%%%%
%%%%%%%%%%%%%%%%%%%%%%%%%%%%%%%%%%%%%%%%%%%%%%%%%%%
%%%%%%%%%%%%%%%%%%%%%%%%%%%%%%%%%%%%%%%%%%%%%%%%%%%

\section{Preliminary Steps}

The purpose of this section is to summarize some results on the structure of the stretched $\m$-primary ideals, which we need throughout this paper.

We set $\alpha_n=\ell_A(I^{n+1}/QI^n)$ and $\beta_n=\ell_A(QI^{n-1} \cap I^{n+1}/QI^n)$ for $n \geq 1$.
We then have the following lemma which was given by Rossi and Valla.

\begin{lem}{$([15, \mathrm{Lemma} \ 2.4])$}
Suppose that $I$ is stretched.
Then we have the following.
\begin{itemize}
\item[$(1)$] There exists $x, y \in I \backslash Q$ such that $I^{n+1}=QI^n+(x^ny)$ holds true for all $n \geq 1$.
\item[$(2)$] The map $$ I^{n+1}/QI^n \overset{\widehat{x}}{\to}I^{n+2}/QI^{n+1} $$ is surjective for all $n \geq 1$. Therefore $\alpha_n \geq \alpha_{n+1}$ for all $n \geq 1$.
\item[$(3)$] $\m x^{n}y \subseteq QI^{n}+I^{n+2}$ and hence $\ell_A(I^{n+1}/QI^n+I^{n+2}) \leq 1$ for all $n \geq 1$.
\end{itemize}
\end{lem}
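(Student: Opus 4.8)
The plan is to first reduce the stretched hypothesis to the single numerical statement $\ell_A(I^2/(QI+I^3))=1$, then extract from it the two elements $x,y$, and finally propagate the description of $I^2$ up the filtration by repeatedly multiplying by $x$.

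For the reduction I would use the modular law together with condition $(1)$, $Q\cap I^2=QI$, to identify $(I^2+Q)/(I^3+Q)$ with $I^2/(I^3+QI)$; condition $(2)$ then reads $\ell_A(I^2/(QI+I^3))=1$. Since a length-one module over a local ring is isomorphic to $A/\m$, this simultaneously yields $\m I^2\subseteq QI+I^3$ and an element $z\in I^2$ with $I^2=QI+I^3+(z)$. Writing $z$ as a sum of products of pairs of elements of $I$, at least one summand $xy$ (with $x,y\in I$) must avoid $QI+I^3$; as $xy\notin QI$ forces $x,y\notin Q$, this already produces the required $x,y\in I\setminus Q$ together with $I^2=QI+I^3+(xy)$.

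The crux is the base case $I^2=QI+(xy)$, i.e. removing the $I^3$ term. I would argue by Nakayama: setting $V=I^2/(QI+(xy))$, the relation $I^2=QI+(xy)+I^3$ shows that $V$ is the image of $I^3$, while $I^3=I\cdot I^2\subseteq\m I^2$ shows that this image lies in $\m V$; hence $V=\m V$, and so $V=0$ by Nakayama's lemma. This is the only step that is not a formal manipulation, and it is where the hypotheses genuinely enter; I expect it to be the main obstacle.

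The inductive step of $(1)$ then reduces to the containment $I\,x^{n-1}y\subseteq QI^n+(x^ny)$, which I would prove by induction on $n$: the case $n=1$ is the base case just settled, and multiplying the case $n-1$ through by $x$ gives the case $n$, since $xQI^{n-1}\subseteq QI^n$. Granting this, $I^{n+1}=I\cdot I^n=QI^n+I\,x^{n-1}y=QI^n+(x^ny)$, which is $(1)$. For $(2)$, since $x^{n+1}y=x\cdot(x^ny)\in xI^{n+1}$, assertion $(1)$ gives $I^{n+2}=QI^{n+1}+(x^{n+1}y)\subseteq QI^{n+1}+xI^{n+1}$, so $\widehat{x}$ is surjective and $\alpha_n\geq\alpha_{n+1}$. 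Finally $(3)$ follows by multiplying $\m xy\subseteq QI+I^3$ (a consequence of $\m I^2\subseteq QI+I^3$) through by $x^{n-1}$ and using $x^{n-1}QI\subseteq QI^n$ and $x^{n-1}I^3\subseteq I^{n+2}$; the length bound is then immediate, since $(1)$ makes $I^{n+1}/(QI^n+I^{n+2})$ a cyclic module annihilated by $\m$. Everything after the Nakayama base case is bookkeeping with the single element $x$.
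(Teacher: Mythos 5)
Your proof is correct. Note that the paper itself gives no argument for this lemma: it is quoted with a citation to Rossi--Valla (the reference labelled $[15, \mathrm{Lemma}\ 2.4]$, in context their paper on stretched $\m$-primary ideals), so there is no in-text proof to compare against. Your argument is essentially the standard one from that source: the modular-law reduction $(I^2+Q)/(I^3+Q)\cong I^2/(QI+I^3)$ (this is exactly where the hypothesis $Q\cap I^2=QI$ enters), the choice of a product $xy\notin QI+I^3$ generating the length-one quotient, the Nakayama step killing $V=I^2/(QI+(xy))$ via $I^3\subseteq\m I^2$, and the propagation of everything to higher powers by multiplying by $x^{n-1}$ are all sound, and together they give $(1)$, $(2)$, and $(3)$ exactly as stated.
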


The following lemma seems well known, but let us give a proof of it for the sake of completeness.

\begin{lem}
Suppose that $I$ is stretched.
Then we have the following.
\begin{itemize}
\item[$(1)$] $\alpha_1=\ell_A(I^2+Q/Q)=n_I-1$.
\item[$(2)$] $\beta_n \leq 1$ for all $n \geq 1$.
\item[$(3)$] $\alpha_n=n_I-n+\sum_{k=1}^n \beta_k$ for $1 \leq n \leq r_I-1$.
\end{itemize}
\end{lem}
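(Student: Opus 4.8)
The plan is to reduce everything to a single length relation linking $\alpha_n$, $\beta_{n+1}$ and the auxiliary quantity $\gamma_n:=\ell_A(I^{n+1}/(QI^n+I^{n+2}))$, which satisfies $\gamma_n\le 1$ by Lemma 2.1(3). Before that I would record two immediate facts. The stretched condition $(1)$, $Q\cap I^2=QI$, gives $\beta_1=\ell_A((Q\cap I^2)/QI)=0$; and through the isomorphism $I^2/(Q\cap I^2)\cong(I^2+Q)/Q$ the same condition yields $\alpha_1=\ell_A(I^2/QI)=\ell_A((I^2+Q)/Q)$, which is the first equality in $(1)$.

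For the second equality in $(1)$ I would use the filtration $Q=I^{n_I+1}+Q\subseteq I^{n_I}+Q\subseteq\cdots\subseteq I^2+Q$. By Lemma 2.1(1) each term equals $I^k+Q=Q+(x^{k-1}y)$, so every successive quotient $(I^k+Q)/(I^{k+1}+Q)$ is cyclic of length at most $1$. Such a quotient vanishes exactly when $x^{k-1}y\in Q+(x^ky)$; writing $x^{k-1}y=q+ax^ky$ and noting that $1-ax$ is a unit forces $x^{k-1}y\in Q$, hence $I^k\subseteq Q$, which is impossible for $k\le n_I$ by minimality of $n_I$. Thus each of the $n_I-1$ steps has length exactly $1$, proving $(1)$.

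The heart of the matter is the master relation. Inserting $QI^n\subseteq QI^n+I^{n+2}\subseteq I^{n+1}$ and using additivity of length gives $\alpha_n=\gamma_n+\ell_A((QI^n+I^{n+2})/QI^n)$; the second isomorphism theorem identifies $(QI^n+I^{n+2})/QI^n\cong I^{n+2}/(I^{n+2}\cap QI^n)$, and since $QI^{n+1}\subseteq I^{n+2}\cap QI^n$ and the module $I^{n+2}\cap QI^n=QI^n\cap I^{(n+1)+1}$ is precisely the one defining $\beta_{n+1}$, this length equals $\alpha_{n+1}-\beta_{n+1}$. Hence for all $n\ge 1$
\[
\alpha_n=\alpha_{n+1}+\gamma_n-\beta_{n+1}.
\]
Part $(2)$ then falls out immediately: $\beta_{n+1}=\gamma_n-(\alpha_n-\alpha_{n+1})\le\gamma_n\le 1$, because $\alpha_n\ge\alpha_{n+1}$ by Lemma 2.1(2); together with $\beta_1=0$ this gives $\beta_n\le 1$ for every $n\ge 1$.

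For $(3)$ I would telescope the master relation starting from $\alpha_1$ and feed in the value of $\gamma_k$. A unit argument identical to the one in $(1)$ shows $\gamma_k\ne 0$ precisely when $I^{k+1}\ne QI^k$, so $\gamma_k=1$ for all $k\le r_I-1$. Therefore, for $1\le n\le r_I-1$ every $\gamma_k$ with $k\le n-1$ equals $1$, and using $\alpha_1=n_I-1$ together with $\beta_1=0$ the telescoped sum collapses to $\alpha_n=n_I-n+\sum_{k=1}^n\beta_k$. I expect the main obstacle to be the bookkeeping in the master relation—specifically recognizing that $I^{n+2}\cap QI^n$ is exactly the defining module of $\beta_{n+1}$ and keeping the index shifts straight—rather than any genuinely hard algebra; once that identity is in place, $(2)$ and $(3)$ are purely formal.
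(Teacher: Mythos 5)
Your proof is correct and takes essentially the same route as the paper: part (1) uses the same filtration $I^2+Q \supseteq I^3+Q \supseteq \cdots \supseteq I^{n_I+1}+Q=Q$ with length-one quotients, and your master relation $\alpha_n=\alpha_{n+1}+\gamma_n-\beta_{n+1}$ is precisely the paper's computation $\alpha_{n-1}=1+\alpha_n-\beta_n$ in its proof of (2), from which (2) and (3) follow by the same monotonicity and telescoping arguments. The only difference is cosmetic: you justify the nonvanishing of the successive quotients explicitly (via the generator $x^{k-1}y$ from Lemma 2.1(1) and the unit $1-ax$), where the paper leaves this Nakayama-type step implicit.
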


We notice here that $\alpha_1=\ell_A(I^2/QI)=\e_0(I)+(d-1)\ell_A(A/I)-\ell_A(I/I^2)$ holds true (see for instance [17]), so that  $n_I=\alpha_1+1$ doesn't depend on a minimal reduction $Q$ of $I$ for stretched $\m$-primary ideals $I$.

\begin{proof}[Proof of Lemma 2.2]
$(1)$: Consider the sequence
$$ I^2+Q \supsetneq I^3+Q \supsetneq \cdots \supsetneq I^{n+1}+Q \supsetneq I^{n+2}+Q \supsetneq \cdots \supsetneq I^{n_I+1}+Q=Q.$$
Then since $I^{n+1}+Q/I^{n+2}+Q \cong I^{n+1}/Q \cap I^{n+1}+I^{n+2}$ and $$0 < \ell_A(I^{n+1}+Q/I^{n+2}+Q)=\ell_A( I^{n+1}/Q \cap I^{n+1}+I^{n+2}) \leq \ell_A(I^{n+1}/QI^n+I^{n+2}) \leq 1$$
by Lemma 2.1 $(3)$, we have $\ell_A(I^{n+1}+Q/I^{n+2}+Q)=1$ for all $1 \leq i \leq n_I-1$.
Hence, we have $$\alpha_1=\ell_A(I^2+Q/Q)= \sum_{n=1}^{n_I-1}\ell_A(I^{n+1}+Q/I^{n+2}+Q)=n_I-1$$
as required.

\noindent
$(2)$: We only need to show that the case where $2 \leq n \leq r_I-1$.
For all $2 \leq n \leq r_I-1$, we have $\ell_A(I^{n}/QI^{n-1}+I^{n+1})=1$ by Lemma 2.1 $(3)$ and $QI^{n-1}+I^{n+1}/QI^{n-1} \cong I^{n+1}/QI^{n-1} \cap I^{n+1}$ so that the equalities 
\begin{eqnarray*}
\alpha_{n-1}=\ell_A(I^{n}/QI^{n-1})&=&\ell_A(I^{n}/QI^{n-1}+I^{n+1})+\ell_A(QI^{n-1}+I^{n+1}/QI^{n-1})\\
&=& 1+\ell_A(I^{n+1}/QI^{n-1} \cap I^{n+1})\\
&=& 1+\ell_A(I^{n+1}/QI^{n})-\ell_A(QI^{n-1} \cap I^{n+1}/QI^{n})\\
&=& 1+\alpha_{n}-\beta_{n} 
\end{eqnarray*}
hold true.
Thus, we get $\beta_n = 1+\alpha_{n}-\alpha_{n-1} \leq 1$ by Lemma 2.1 $(2)$.

\noindent
$(3)$ We may assume that $n \geq 2$ and that our assertion holds true for $n-1$.
Then, we have $\alpha_n=\alpha_{n-1}+\beta_n-1$ by the proof of assertion $(2)$, and 
$$\alpha_n=\{n_I-(n-1)+\sum_{k=1}^{n-1}\beta_k\}+\beta_n-1=n_I-n+\sum_{k=1}^n \beta_k$$
by the hypothesis of induction on $n$.
\end{proof}

We set $$\Lambda:=\Lambda_I=\Lambda_Q(I)=\{n \geq 1 \ | \ QI^{n-1}\cap I^{n+1}/QI^n \neq (0) \}$$
and $|\Lambda|$ denotes the cardinality of the set $\Lambda$.
Then the following proposition is satisfied.
We notice here that the sum $\sum_{n \geq 1} \alpha_n$ gives an upper bound of $\e_1(I)-\e_0(I)+\ell_A(A/I)$ $($c.f. [9]$)$ and it plays a key role for a proof of our main theorem.

\begin{prop}
Suppose that $I$ is stretched.
Then the following assertions hold true.
\begin{itemize}
\item[$(1)$] $|\Lambda|=r_I-n_I$ and
\item[$(2)$] $\displaystyle  \sum_{n \geq 1}\alpha_n=\binom{r_I}{2}-\sum_{s \in \Lambda}s+|\Lambda| .$
\end{itemize}
\end{prop}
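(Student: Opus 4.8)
The plan is to drive everything off the telescoping identity $\alpha_n = n_I - n + \sum_{k=1}^n \beta_k$ of Lemma 2.2(3), combined with the fact from Lemma 2.2(2) that each $\beta_k \in \{0,1\}$. The latter means $n \in \Lambda$ exactly when $\beta_n = 1$, so that $|\Lambda| = \sum_{k \geq 1}\beta_k$. First I would dispose of the range. Since $I$ is stretched, condition $(2)$ forces $I^2 \not\subseteq Q$ (otherwise $I^2+Q = I^3 + Q = Q$ and the length would be $0$, not $1$), so $n_I \geq 2$ and hence $r_I \geq n_I \geq 2$; thus Lemma 2.1(1) and Lemma 2.2(3) may be invoked at the index $n = r_I - 1 \geq 1$. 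I would also note $\Lambda \subseteq \{1, \dots, r_I - 1\}$: for $n \geq r_I$ we have $I^{n+1} = QI^n \subseteq QI^{n-1}$, so $QI^{n-1}\cap I^{n+1} = QI^n$ and $\beta_n = 0$. Consequently $|\Lambda| = \sum_{k=1}^{r_I - 1}\beta_k$.

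For assertion $(1)$, the idea is to evaluate the telescoping identity at the top index. Lemma 2.2(3) at $n = r_I - 1$ reads
$$\alpha_{r_I - 1} = n_I - (r_I - 1) + \sum_{k=1}^{r_I - 1}\beta_k = n_I - r_I + 1 + |\Lambda|,$$
so the assertion $|\Lambda| = r_I - n_I$ is equivalent to $\alpha_{r_I - 1} = 1$. The key step is to identify $I^{r_I}/QI^{r_I - 1}$ as a simple module. By Lemma 2.1(1) this quotient is cyclic, generated by the image of $x^{r_I - 1}y$; by Lemma 2.1(3) we have $\m x^{r_I - 1}y \subseteq QI^{r_I - 1} + I^{r_I + 1}$, and since $I^{r_I + 1} = QI^{r_I} \subseteq QI^{r_I - 1}$ this simplifies to $\m x^{r_I - 1}y \subseteq QI^{r_I - 1}$. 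Hence the quotient is cyclic and annihilated by $\m$, so it is either zero or simple; it is nonzero because $r_I - 1 < r_I$ gives $I^{r_I} \neq QI^{r_I - 1}$. Therefore $\alpha_{r_I - 1} = 1$, and $(1)$ follows.

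For assertion $(2)$, I would use $\alpha_n = 0$ for $n \geq r_I$ to write $\sum_{n \geq 1}\alpha_n = \sum_{n=1}^{r_I - 1}\alpha_n$, then substitute Lemma 2.2(3) and split:
$$\sum_{n=1}^{r_I - 1}\alpha_n = (r_I - 1)n_I - \sum_{n=1}^{r_I - 1}n + \sum_{n=1}^{r_I - 1}\sum_{k=1}^{n}\beta_k.$$
Here $\sum_{n=1}^{r_I-1}n = \binom{r_I}{2}$, and interchanging the order of summation in the double sum gives $\sum_{n=1}^{r_I-1}\sum_{k=1}^n \beta_k = \sum_{k=1}^{r_I-1}(r_I - k)\beta_k = r_I|\Lambda| - \sum_{s \in \Lambda}s$. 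Finally, inserting $n_I = r_I - |\Lambda|$ from part $(1)$, the combination $(r_I - 1)n_I + r_I|\Lambda|$ collapses to $r_I(r_I - 1) + |\Lambda| = 2\binom{r_I}{2} + |\Lambda|$, and subtracting $\binom{r_I}{2}$ leaves $\binom{r_I}{2} - \sum_{s \in \Lambda}s + |\Lambda|$, which is the asserted formula.

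The one genuinely non-formal step is the identity $\alpha_{r_I - 1} = 1$; the rest is bookkeeping with the telescoping formula and a reindexed double sum. I therefore expect the main obstacle, such as it is, to be the annihilator argument showing that the last nonvanishing quotient $I^{r_I}/QI^{r_I - 1}$ is simple, which relies on pairing the cyclicity supplied by Lemma 2.1(1) against the containment $\m x^{r_I - 1}y \subseteq QI^{r_I - 1}$ extracted from Lemma 2.1(3).
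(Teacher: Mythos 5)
Your proof is correct, and its skeleton coincides with the paper's: both parts rest on the telescoping identity of Lemma 2.2(3) together with $\beta_n \in \{0,1\}$ from Lemma 2.2(2), and part (1) in both treatments amounts to evaluating that identity at $n = r_I-1$ and knowing $\alpha_{r_I-1}=1$. You actually add something there: the paper asserts $\alpha_{r_I-1}=1$ without comment, whereas you justify it (cyclicity of $I^{r_I}/QI^{r_I-1}$ from Lemma 2.1(1), annihilation by $\m$ from Lemma 2.1(3) combined with $I^{r_I+1}=QI^{r_I}\subseteq QI^{r_I-1}$, nonvanishing from the minimality of $r_I$), and you record the implicit preliminaries $r_I\geq 2$ and $\beta_n=0$ for $n\geq r_I$. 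Where you genuinely diverge is the bookkeeping in part (2): the paper enumerates $\Lambda=\{s_1<\cdots<s_{|\Lambda|}\}$, partitions $\{1,\dots,r_I-1\}$ into blocks $[s_m,s_{m+1})$ on which $\alpha_n=n_I-n+m$, and sums block by block; you instead substitute the telescoping formula and interchange the double sum, using
$$\sum_{n=1}^{r_I-1}\sum_{k=1}^{n}\beta_k=\sum_{k=1}^{r_I-1}(r_I-k)\beta_k=r_I|\Lambda|-\sum_{s\in\Lambda}s,$$
and then feed in $n_I=r_I-|\Lambda|$ from part (1). The two computations are equivalent, but yours is shorter, avoids ordering the elements of $\Lambda$ and the delicate block-boundary algebra (where the paper's intermediate identities take some effort to verify), and makes the dependence on $\sum_{s\in\Lambda}s$ transparent; the paper's version has the minor virtue of exhibiting the shape of the sequence $\{\alpha_n\}$ itself, namely that it drops by one at each step except at indices in $\Lambda$, where it stays constant.
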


\begin{proof}
$(1)$: We have $n_I-r_I+1+\sum_{n \geq 1}\beta_n=\alpha_{r_I-1}=1$ by Lemma 2.2 $(3)$.
Thus, we get $|\Lambda|=\sum_{n \geq 1}\beta_n=r_I-n_I$ by Lemma 2.2 $(2)$ and $(3)$ as required.

\noindent
$(2)$: We set $s_0=1$, $s_{|\Lambda|+1}=r_I$, and $s_n >0$ for $1 \leq n \leq |\Lambda|$ such that $ \Lambda=\{s_n \ | \ 1 \leq n \leq |\Lambda|\}$.
For each $1 \leq m \leq |\Lambda|$, we have $\alpha_n=n_I-n+m$ for $s_m \leq n < s_{m+1}$ by Lemma 2.2 $(3)$.
Then the equalities
\begin{eqnarray*}
\sum_{n \geq 1}\alpha_n=\sum_{n=1}^{r_I-1}\alpha_n&=&\sum_{m=0}^{|\Lambda|}\sum_{n=s_m}^{s_{m+1}-1}(n_I-n+m)\\
&=& \sum_{m=0}^{|\Lambda|}\sum_{n=s_m+1}^{s_{m+1}-1}(n_I-n+m)+\sum_{m=0}^{|\Lambda|}(n_I-s_m+m)\\
&=& \binom{n_I-1}{2}+\sum_{m=0}^{|\Lambda|}(n_I+m)-\sum_{m=0}^{|\Lambda|}s_m\\
&=& \binom{n_I+|\Lambda|}{2}-(n_I-1)+r_I-\sum_{m=0}^{|\Lambda|}s_m\\ 
&=& \binom{r_I}{2}-\sum_{s \in \Lambda}s+|\Lambda|
\end{eqnarray*}
hold true.
\end{proof}

We can get the following corollary for the case where the reduction number $r_I$ attains almost minimal value $n_I+1$.

\begin{cor}
Suppose that $I$ is stretched and assume that $r_I=n_I+1$.
We set $s=\min\{n \geq 1 \ | \ Q \cap I^{n+1} \neq QI^n \}$.
Then we have $\Lambda=\{s\}$ and $\sum_{n \geq 1}\alpha_n= \binom{n_I+1}{2}-s+1$.
\end{cor}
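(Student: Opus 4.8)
The plan is to deduce everything directly from Proposition 2.4 by identifying the unique element of $\Lambda$ with the integer $s$. First I would note that since $r_I=n_I+1$, Proposition 2.4(1) gives $|\Lambda|=r_I-n_I=1$, so $\Lambda$ consists of a single integer. Moreover this already guarantees that $s$ is well defined: for the unique $n\in\Lambda$ we have $QI^{n-1}\cap I^{n+1}\subseteq Q\cap I^{n+1}$ (because $QI^{n-1}\subseteq Q$), so $Q\cap I^{n+1}\supsetneq QI^n$ and the set defining $s$ is nonempty. Thus the whole corollary reduces to proving that this single element of $\Lambda$ equals $s$.

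The key step is to show $s\in\Lambda$, i.e. $\beta_s=\ell_A(QI^{s-1}\cap I^{s+1}/QI^s)\neq 0$, and here I would exploit the minimality built into the definition of $s$. By that minimality, $Q\cap I^{n+1}=QI^n$ for every $1\leq n<s$; observe that the stretched condition $Q\cap I^2=QI$ is exactly the instance $n=1$, so in particular $s\geq 2$ and the case $n=s-1$ is available, giving $Q\cap I^s=QI^{s-1}$. Intersecting with $I^{s+1}\subseteq I^s$ then yields the identity $Q\cap I^{s+1}=(Q\cap I^s)\cap I^{s+1}=QI^{s-1}\cap I^{s+1}$. Since $Q\cap I^{s+1}\neq QI^s$ by the defining property of $s$, this forces $QI^{s-1}\cap I^{s+1}\neq QI^s$, that is $\beta_s\neq 0$, so $s\in\Lambda$. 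Combined with $|\Lambda|=1$ this gives $\Lambda=\{s\}$.

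Finally I would substitute $r_I=n_I+1$, $|\Lambda|=1$, and $\sum_{t\in\Lambda}t=s$ into the formula of Proposition 2.4(2) to obtain $\sum_{n\geq 1}\alpha_n=\binom{n_I+1}{2}-s+1$, which is the asserted value. I expect the only genuine subtlety to be the second step: the index $s$ is defined through the colon-type condition on $Q\cap I^{n+1}$, whereas membership in $\Lambda$ is phrased through $QI^{n-1}\cap I^{n+1}$, and the real content is the reconciliation $Q\cap I^{s+1}=QI^{s-1}\cap I^{s+1}$ coming from minimality (with the stretched hypothesis ensuring $s\geq 2$). Everything else is bookkeeping with the two identities already recorded in Proposition 2.4.
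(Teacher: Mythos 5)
Your proof is correct and follows essentially the same route as the paper: both establish $s\in\Lambda$ by using minimality of $s$ (together with the stretched hypothesis $Q\cap I^2=QI$, which forces $s\geq 2$) to get $Q\cap I^{s+1}=QI^{s-1}\cap I^{s+1}\neq QI^s$, and then combine $|\Lambda|=r_I-n_I=1$ with the summation formula from Proposition 2.3. The only difference is cosmetic (your citation labels and your extra remark that the set defining $s$ is nonempty, which the paper leaves implicit).
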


\begin{proof}
Since $Q \cap I^{s+1} \subseteq Q \cap I^{s}=QI^{s-1}$, we have $QI^{s-1} \cap I^{s+1}=Q \cap I^{s+1} \neq QI^s$ so that $\beta_s \neq 0$.
Therefore, because $|\Lambda|=r_I-n_I=1$ by Proposition 2.3 $(1)$, we get $\Lambda=\{s\}$.
\end{proof}

\section{The structure of Sally modules}

In this paper we need the notion of Sally modules for computing to the Hilbert coefficients of ideals.
The purpose of this section is to summarize some results and techniques on the Sally modules which we need throughout this paper.
Remark that in this section $\m$-primary ideals $I$ are not necessarily stretched.  

Let $T={\rmR}(Q)=A[Qt] \subseteq A[t]$ denote the Rees algebra of $Q$.
Following Vasconcelos [21], we consider 
$$S=\rmS_Q(I)=IR/IT \cong \bigoplus_{n \geq 1}I^{n+1}/Q^nI$$ 
the Sally module of $I$ with respect to $Q$.

We give one remark about Sally modules.
See [6, 21] for further information.

\begin{rem}{$([6, 21])$}
We notice that $S$ is a finitely generated graded $T$-module and $\m^{n}S=(0)$ for all $n \gg 0$.
We have $\Ass_TS \subseteq \{\m T\}$ so that $\dim_TS=d$ if $S \neq (0)$.
\end{rem}

From now on, let us introduce some techniques, being inspired by [3, 4], which plays a crucial role throughout this paper.
See [14, Section 3] $($also [13, Section 2] for the case where $I=\m)$ for the detailed proofs.

We denote by $E(m)$, for a graded module $E$ and each $m \in \Z$, the graded module whose grading is given by $[E(m)]_n=E_{m+n}$ for all $n \in \Z$.

We have an exact sequence
$$ 0 \to K^{(-1)} \to F \overset{\varphi_{-1}}{\to} G \to R/I R+T \to 0 \ \ \ \ (\dagger_{-1})$$
of graded $T$-modules induced by tensoring the canonical exact sequence
$$ 0 \to T \overset{i}{\hookrightarrow} R \to R/T \to 0$$
of graded $T$-modules with $A/I$ where $\varphi_{-1}=A/I \otimes i$, $K^{(-1)}=\operatorname\Ker{\varphi_{-1}}$, and $F=T/IT \cong (A/I)[X_1,X_2,\cdots,X_d]$ is a polynomial ring with $d$ indeterminates over the residue class ring $A/I$.
We set $\mu_A(N)=\ell_A(N/\m N)$ denotes the minimal number of generators of an $A$-module $N$.

\begin{lem}$([14, \mathrm{Lemma} \ 3.1])$
Assume that $I \supsetneq Q$ and put $\mu=\mu_A(I/Q)$. 
Then there exists an exact sequence
$$ T(-1)^{\mu} \overset{\phi}{\to} R/T \to S(-1) \to 0 $$
as graded $T$-modules.
\end{lem}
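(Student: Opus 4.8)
The plan is to realize the second map as an explicit degreewise surjection and then to identify its kernel as the image of a map out of $T(-1)^{\mu}$ built from a minimal generating set of $I/Q$. First I would pin down the gradings. Reading $R$ and $T$ as subrings of $A[t]$ via $t^n$, one has $[R/T]_n \cong I^n/Q^n$, while $[S(-1)]_n = [S]_{n-1} \cong I^n/Q^{n-1}I$. Because $Q \subseteq I$ we have $Q^n = Q\cdot Q^{n-1} \subseteq Q^{n-1}I$, so in each degree the identity on $I^n$ descends to a surjection $I^n/Q^n \twoheadrightarrow I^n/Q^{n-1}I$. I would check that these assemble into a surjective homomorphism $\psi\colon R/T \to S(-1)$ of graded $T$-modules; compatibility with multiplication by elements of $[T]_1=Q$ is immediate since $\psi$ is induced by the identity map. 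This already gives exactness at $S(-1)$, and the kernel of $\psi$ is the graded $T$-submodule $\mathcal{K}\subseteq R/T$ with $[\mathcal{K}]_n = Q^{n-1}I/Q^n$.

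Next I would construct $\phi$ so that $\Im\phi = \mathcal{K}$. Pick $x_1,\dots,x_{\mu} \in I$ whose residues form a minimal $A$-generating set of $I/Q$, so that $I = Q + \sum_{i=1}^{\mu} A x_i$. Each $x_i \in [R]_1 = I$ has a class $\overline{x_i} \in [R/T]_1 = I/Q$, and I let $\phi_i\colon T(-1) \to R/T$ be the $T$-linear map sending the degree-$1$ free generator to $\overline{x_i}$; concretely, in degree $n$ it sends $q \in [T(-1)]_n = Q^{n-1}$ to $\overline{q x_i} \in I^n/Q^n$, which is legitimate since $q x_i \in Q^{n-1}I \subseteq I^n$. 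Setting $\phi = (\phi_1,\dots,\phi_{\mu})\colon T(-1)^{\mu} \to R/T$, its image in degree $n$ is $\bigl(Q^{n-1}(x_1,\dots,x_{\mu}) + Q^n\bigr)/Q^n$.

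Finally I would close the argument with the identity $Q^{n-1}I = Q^n + Q^{n-1}(x_1,\dots,x_{\mu})$, obtained by multiplying $I = Q + \sum_i A x_i$ by $Q^{n-1}$. This shows $\Im\phi = \mathcal{K} = \Ker\psi$ in every degree, yielding the asserted exact sequence $T(-1)^{\mu} \overset{\phi}{\to} R/T \to S(-1) \to 0$.

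The argument is essentially bookkeeping, so I do not expect a serious obstacle; the only genuine content is the two containments used, namely $Q\subseteq I$ (which produces the surjection onto $S(-1)$) and the reduction $I \equiv \sum_i A x_i \pmod{Q}$ (which guarantees that $\phi$ hits all of $\mathcal{K}$). The one thing to watch carefully is keeping the grading shifts in $T(-1)$ and $S(-1)$ consistent throughout, since an off-by-one error there is the most likely way for the degreewise identifications to fail to line up.
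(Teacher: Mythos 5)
Your proof is correct, and it is essentially the argument that the paper outsources to the citation $[14, \mathrm{Lemma}\ 3.1]$: choose $x_1,\dots,x_\mu$ with $I=Q+(x_1,\dots,x_\mu)$, map the free generators to the classes $\overline{x_it}$, and use $Q^{n-1}I=Q^n+Q^{n-1}(x_1,\dots,x_\mu)$ to identify the cokernel degreewise with $I^n/Q^{n-1}I=[S(-1)]_n$. Your grading bookkeeping (including the edge cases $n=0,1$, where both sides vanish) checks out, so there is nothing to add.
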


Tensoring the exact sequence of Lemma 3.2 with $A/I$, we get an exact sequence
$$ F(-1)^{\mu} \overset{\overline{\phi}}{\to} R/I R+T \to (S/I S)(-1) \to 0$$
of graded $T$-modules, where $\overline{\phi}=A/I \otimes \phi$.

We furthermore get the following commutative diagram
\[\begin{array}{ccccc}
 (F_{0}^{\mu} \otimes F)(-1) & \to & ([R/IR+T]_1 \otimes F)(-1) & \to & 0 \\
\mapdown{\simeq} & & \mapdown{\varphi_0} & & \\
 F(-1)^{\mu} & \to & \Im \overline{\phi} & \to & 0 
\end{array}\]
of graded $T$-modules by tensoring the sequence $$F_0^{\mu} \to [R/IR+T]_1 \to 0$$ with $F$.
Then, we get the exact sequence
$$ 0 \to K^{(0)}(-1) \to ([R/IR+T]_1 \otimes F)(-1) \overset{\varphi_0}{\to} R/IR+T \to S/IS(-1) \to 0 \ \ \ (\dagger_0)$$
of graded $T$-modules where $K^{(0)}=\Ker \varphi_0$.

Notice that $\Ass_TK^{(m)} \subseteq \{\m T\}$ for all $m = -1, 0$, because $F \cong (A/I)[X_1,X_2,\cdots,X_d]$ is a polynomial ring over the residue ring $A/I$ and $[R/IR+T]_1 \otimes F$ is a maximal Cohen-Macaulay module over $F$.

We then have the following Proposition by the exact sequences $(\dagger_{-1})$ and $(\dagger_0)$.

\begin{prop}$([14, \mathrm{Lemma} \ 3.3])$
We have
\begin{eqnarray*}
\ell_A(I^n/I^{n+1})&=&\ell_A(A/[I^2+Q])\binom{n+d-1}{d-1}-\ell_A(I/[I^2+Q])\binom{n+d-2}{d-2}\\
&+& \ell_A([S/IS]_{n-1})-\ell_A(K^{(-1)}_n)-\ell_A(K^{(0)}_{n-1})
\end{eqnarray*}
for all $n \geq 0$.
\end{prop}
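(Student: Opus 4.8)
The plan is to read off $\ell_A(I^n/I^{n+1})=\ell_A(G_n)$ from the two four-term exact sequences $(\dagger_{-1})$ and $(\dagger_0)$ by passing to degree $n$ and taking the alternating sum of lengths. Every graded piece in sight has finite length, so in each exact sequence the alternating sum of the lengths of the degree-$n$ components vanishes; this Euler-characteristic bookkeeping is the only homological input required.

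First I would restrict $(\dagger_{-1})$ to degree $n$ to obtain the exact sequence of finite-length $A$-modules $0 \to K^{(-1)}_n \to F_n \to G_n \to [R/IR+T]_n \to 0$, which gives $\ell_A(G_n)=\ell_A(F_n)-\ell_A(K^{(-1)}_n)+\ell_A([R/IR+T]_n)$. Since $F \cong (A/I)[X_1,\ldots,X_d]$ is a polynomial ring over $A/I$, its degree-$n$ component is free over $A/I$ on the $\binom{n+d-1}{d-1}$ monomials of degree $n$, so $\ell_A(F_n)=\ell_A(A/I)\binom{n+d-1}{d-1}$. This reduces everything to computing $\ell_A([R/IR+T]_n)$.

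For that I would restrict $(\dagger_0)$ to degree $n$, keeping careful track of the shift $(-1)$, which yields $0 \to K^{(0)}_{n-1} \to [[R/IR+T]_1 \otimes F]_{n-1} \to [R/IR+T]_n \to [S/IS]_{n-1} \to 0$ and hence $\ell_A([R/IR+T]_n)=\ell_A([[R/IR+T]_1 \otimes F]_{n-1})-\ell_A(K^{(0)}_{n-1})+\ell_A([S/IS]_{n-1})$. The decisive identification is $[R/IR+T]_1 \cong I/(I^2+Q)$, which follows from $R_1=I$, $(IR)_1=I^2$, and $T_1=Q$; this is an $A/I$-module (as $I^2 \subseteq I^2+Q$), so tensoring it over $A/I$ with the free module $F_{n-1}$ of rank $\binom{n+d-2}{d-1}$ gives $\ell_A([[R/IR+T]_1 \otimes F]_{n-1})=\ell_A(I/(I^2+Q))\binom{n+d-2}{d-1}$.

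Substituting the second computation into the first and rewriting $\ell_A(A/I)=\ell_A(A/(I^2+Q))-\ell_A(I/(I^2+Q))$, the coefficient of $\ell_A(I/(I^2+Q))$ collapses to $\binom{n+d-2}{d-1}-\binom{n+d-1}{d-1}=-\binom{n+d-2}{d-2}$ by Pascal's identity, and what remains is exactly the asserted formula. I expect the only points demanding care to be the bookkeeping of the grading shift in $(\dagger_0)$ and the final binomial simplification; the small-$n$ boundary cases, where the shifted modules $K^{(0)}_{n-1}$ and $[S/IS]_{n-1}$ vanish, should be verified directly but pose no real difficulty.
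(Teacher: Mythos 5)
Your proof is correct and follows essentially the same route the paper intends: the paper states that Proposition 3.3 follows from the exact sequences $(\dagger_{-1})$ and $(\dagger_0)$ (deferring details to [14, Lemma 3.3]), and your argument is exactly that derivation --- taking alternating sums of lengths in degree $n$, using $\ell_A(F_n)=\ell_A(A/I)\binom{n+d-1}{d-1}$, the identification $[R/IR+T]_1\cong I/(I^2+Q)$, and the Pascal-identity collapse $\binom{n+d-1}{d-1}-\binom{n+d-2}{d-1}=\binom{n+d-2}{d-2}$ after rewriting $\ell_A(A/I)=\ell_A(A/(I^2+Q))-\ell_A(I/(I^2+Q))$.
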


\vskip 2mm
We also need the notion of {\it{filtration of the Sally module}} which was introduced by M. Vaz Pinto [22] as follows.

\begin{defn}([22])
We set, for each $m \geq 1$,
$$ S^{(m)}=I^{m}t^{m-1}R/I^{m}t^{m-1}T (\cong I^{m}R/I^{m}T(-m+1)).$$
\end{defn}

We notice that $S^{(1)}=S$, and $S^{(m)}$ are finitely generated graded $T$-modules for all $m \geq 1$, since $R$ is a module-finite extension of the graded ring $T$.

The following lemma follows by the definition of the graded module $S^{(m)}$.

\begin{lem}
Let $m \geq 1$ be an integer.
Then the following assertions hold true.
\begin{itemize}
\item[$(1)$] $\m^{n} S^{(m)} = (0)$ for integers $n \gg 0$; hence ${\dim}_TS^{(m)} \leq d$.
\item[$(2)$] The homogeneous components $\{ S^{(m)}_n \}_{n \in \Z}$ of the graded $T$-module $S^{(m)}$ are given by
\[ S^{(m)}_n \cong  \left\{
\begin{array}{rl}
(0) & \quad \mbox{if $n \leq m-1 $,} \\
I^{n+1}/Q^{n-m+1}I^{m} & \quad \mbox{if $n \geq m$.}
\end{array}
\right.\]
\end{itemize}
\end{lem}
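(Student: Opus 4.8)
The plan is to prove part (2) by reading the graded pieces directly off the definition, and then to deduce part (1) by a routine finiteness argument. Throughout I grade $R=\bigoplus_{k\geq 0}I^kt^k$ and $T=\bigoplus_{k\geq 0}Q^kt^k$ by the $t$-degree, and give $S^{(m)}=I^mt^{m-1}R/I^mt^{m-1}T$ the induced grading, so that $S^{(m)}_n$ is the component sitting in $t$-degree $n$.

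For (2) I would first expand the two submodules of $A[t]$ occurring in the definition. Multiplying out gives $I^mt^{m-1}R=\bigoplus_{k\geq 0}I^{m+k}t^{m+k-1}$, so its $t$-degree-$n$ component is $I^{n+1}t^n$ when $n\geq m-1$ (take $k=n-m+1$) and is $(0)$ otherwise. Likewise $I^mt^{m-1}T=\bigoplus_{k\geq 0}Q^kI^mt^{m+k-1}$ has $t$-degree-$n$ component $Q^{n-m+1}I^mt^n$ for $n\geq m-1$ and $(0)$ otherwise; note that $Q^{n-m+1}I^m\subseteq I^{n-m+1}I^m=I^{n+1}$ because $Q\subseteq I$, so the quotient is well defined. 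Taking quotients degreewise yields $S^{(m)}_n\cong I^{n+1}/Q^{n-m+1}I^m$ for $n\geq m-1$ and $S^{(m)}_n=(0)$ for $n\leq m-2$. In the boundary case $n=m-1$ the numerator $I^m$ equals the denominator $Q^0I^m=I^m$, so this term vanishes too, which gives exactly the displayed formula: $(0)$ for $n\leq m-1$, and $I^{n+1}/Q^{n-m+1}I^m$ for $n\geq m$.

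For (1) I would argue as follows. Since $Q$ is an $\m$-primary parameter ideal, $Q^{n-m+1}I^m\supseteq Q^{n+1}$ contains a power of $\m$, so each graded piece $S^{(m)}_n=I^{n+1}/Q^{n-m+1}I^m$ has finite length over $A$. It was already observed in the excerpt that each $S^{(m)}$ is a finitely generated graded $T$-module, because $R$ is module-finite over $T$. Choosing finitely many homogeneous $T$-generators $g_1,\dots,g_s$ of $S^{(m)}$, each $g_i$ lies in a finite-length graded piece and is therefore annihilated by some power of $\m$; taking $c$ to be the maximum of these exponents and using that $\m\subseteq A=T_0$ is central in $T$, for every $a\in\m^c$, $f\in T$ and generator $g_i$ one has $a(fg_i)=f(ag_i)=0$, whence $\m^cS^{(m)}=(0)$. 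This proves $\m^nS^{(m)}=(0)$ for $n\gg 0$. For the dimension bound, this inclusion gives $\m^cT\subseteq\Ann_TS^{(m)}$, hence $\m T\subseteq\sqrt{\Ann_TS^{(m)}}$, so $\dim_TS^{(m)}=\dim T/\Ann_TS^{(m)}\leq\dim T/\m T=d$, the final equality holding because the fiber cone $T/\m T$ of the parameter ideal $Q$ is a polynomial ring in $d$ variables over $A/\m$.

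Everything here is elementary; the only point that needs genuine care is the bookkeeping with the $t$-degree and the degree shift $(-m+1)$, so that the index ranges $n\leq m-1$ versus $n\geq m$ come out correctly and the degenerate boundary term at $n=m-1$ is correctly seen to vanish. The passage from "annihilated degreewise" to "annihilated uniformly" is handled cleanly by finite generation together with the centrality of $\m$ in $T$, so I do not anticipate a real obstacle beyond this bookkeeping.
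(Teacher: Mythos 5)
Your proof is correct and takes exactly the route the paper intends: the paper states this lemma without proof (asserting it ``follows by the definition of the graded module $S^{(m)}$''), and your degreewise expansion of $I^mt^{m-1}R$ and $I^mt^{m-1}T$, including the vanishing of the boundary term at $n=m-1$, is precisely that unpacking. Your part (1) argument (finite length of each graded piece via $Q^{n+1}\subseteq Q^{n-m+1}I^m$ and $\m$-primariness of $Q$, then uniform annihilation from finite generation over $T$, then $\dim_T S^{(m)}\leq \dim T/\m T=d$) is also sound and consistent with the facts the paper records in Remark 3.1 and Section 4.
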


Let $L^{(m)}= T S_m^{(m)} $ be a graded $T$-submodule of $S^{(m)}$ generated by $S_m^{(m)}$ and
\begin{eqnarray*}
D^{(m)} &=& (I^{m+1}/QI^m) \otimes (A/\Ann_A(I^{m+1}/QI^m))[X_1,X_2,\cdots,X_d]\\
&\cong& (I^{m+1}/QI^m)[X_1,X_2,\cdots,X_d]
\end{eqnarray*}
for $m \geq 1$ $($c.f. [22, Section 2]$)$.

We then have the following lemma.

\begin{lem}$([22, \mathrm{Section} \ 2])$
The following assertions hold true for $m \geq 1$.
\begin{itemize}
\item[$(1)$] $S^{(m)}/L^{(m)} \cong S^{(m+1)}$ so that the sequence
$$ 0 \to L^{(m)} \to S^{(m)} \to S^{(m+1)} \to 0 $$
 is exact as graded $T$-modules.
\item[$(2)$] There is a surjective homomorphism $\theta_m: D^{(m)}(-m) \to L^{(m)}$ graded $T$-modules.
\end{itemize}
\end{lem}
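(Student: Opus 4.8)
The plan is to argue degree by degree, using the explicit description of the homogeneous components of $S^{(m)}$ recorded in Lemma 3.5~$(2)$, and then to verify that the degreewise maps respect the $T$-module structure. Throughout, I would keep in mind that the $T$-action on $S^{(m)}$ is induced by multiplication by elements of $Qt=T_1$, which carries a class in $I^{n+1}/Q^{n-m+1}I^m$ to a class in $I^{n+2}/Q^{n-m+2}I^m$.

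For assertion $(1)$, first I would identify the submodule $L^{(m)}=TS^{(m)}_m$ in each degree. Since $S^{(m)}_m\cong I^{m+1}/QI^m$ and $T_{n-m}=Q^{n-m}t^{n-m}$, the component $L^{(m)}_n$ is the image of $Q^{n-m}I^{m+1}$ inside $S^{(m)}_n=I^{n+1}/Q^{n-m+1}I^m$ for $n\geq m$. Because $QI^m\subseteq I^{m+1}$ yields $Q^{n-m+1}I^m\subseteq Q^{n-m}I^{m+1}$, this gives
$$ L^{(m)}_n=\frac{Q^{n-m}I^{m+1}}{Q^{n-m+1}I^m}, $$
and consequently
$$ S^{(m)}_n/L^{(m)}_n\cong\frac{I^{n+1}/Q^{n-m+1}I^m}{Q^{n-m}I^{m+1}/Q^{n-m+1}I^m}\cong\frac{I^{n+1}}{Q^{n-m}I^{m+1}}\cong S^{(m+1)}_n $$
by Lemma 3.5~$(2)$ applied with $m+1$ in place of $m$ (the cases $n\leq m$ being trivial, since both sides vanish). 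The natural surjections $I^{n+1}/Q^{n-m+1}I^m\twoheadrightarrow I^{n+1}/Q^{n-m}I^{m+1}$ manifestly commute with multiplication by $Qt$, so they assemble into a surjective graded $T$-homomorphism $S^{(m)}\to S^{(m+1)}$ whose degree $n$ kernel is exactly $L^{(m)}_n$. This yields at once both the isomorphism and the asserted short exact sequence.

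For assertion $(2)$, I would construct $\theta_m$ from its behaviour on the degree $m$ generators. Writing $W=I^{m+1}/QI^m=S^{(m)}_m=L^{(m)}_m=[D^{(m)}(-m)]_m$, I send the free generators $W$ identically onto $L^{(m)}_m$ and let each variable $X_i$ act as multiplication by $a_it\in T_1$. To see this is well defined over $(A/\Ann_A W)[X_1,\dots,X_d]$, note that for $a\in\Ann_A W$, any $\tau\in T$, and $\xi\in W$ one has $a\cdot(\tau\xi)=\tau\cdot(a\xi)=0$, since the $A$- and $T$-actions commute; hence the $A$-action on the image factors through $A/\Ann_A W$, matching the coefficient ring of $D^{(m)}$. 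Surjectivity is immediate: by definition $L^{(m)}$ is generated over $T$ by $W$, and $T=A[a_1t,\dots,a_dt]$ is generated over $A$ by the images $a_it$ of the $X_i$, so every element of $L^{(m)}_n$ is an $A$-combination of $\theta_m$-images of degree $(n-m)$ monomials times generators.

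The main obstacle I anticipate is purely organizational: checking carefully that the degreewise maps glue into genuine graded $T$-module homomorphisms rather than mere $A$-linear maps in each degree, and keeping the shifts and annihilators consistent. In particular, the identification $L^{(m)}_n=Q^{n-m}I^{m+1}/Q^{n-m+1}I^m$ rests on the elementary but essential inclusion $QI^m\subseteq I^{m+1}$, and the well-definedness of $\theta_m$ hinges on the compatibility of the two commuting module structures noted above. Once these bookkeeping points are secured, both assertions follow from the componentwise computations.
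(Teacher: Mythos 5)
Your proof is correct, and in fact the paper offers nothing to compare it against: Lemma 3.6 is quoted from Vaz Pinto $[22$, Section $2]$ without proof, and your degreewise computation — $L^{(m)}_n=Q^{n-m}I^{m+1}/Q^{n-m+1}I^m$ (using $Q^{n-m+1}I^m\subseteq Q^{n-m}I^{m+1}$), hence $S^{(m)}_n/L^{(m)}_n\cong I^{n+1}/Q^{n-m}I^{m+1}\cong S^{(m+1)}_n$, together with the evaluation map $X_i\mapsto a_it$ for $\theta_m$ — is exactly the standard argument behind that reference. The only point you leave implicit is what the graded $T$-module structure on $D^{(m)}$ is in the first place, which is needed before one can say $\theta_m$ is $T$-linear: the correspondence $X_i\leftrightarrow a_it$ identifies $(A/\Ann_A(I^{m+1}/QI^m))[X_1,\dots,X_d]$ with the quotient ring $T/\Ann_A(I^{m+1}/QI^m)T$ precisely because $Q\subseteq\Ann_A(I^{m+1}/QI^m)$ (as $QI^{m+1}\subseteq QI^m$) and because $a_1,\dots,a_d$ is a regular sequence, so the kernel of the surjection $A[X_1,\dots,X_d]\to T$ is generated by the Koszul relations $a_iX_j-a_jX_i$, whose coefficients lie in $Q$ and therefore die modulo the annihilator. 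Once that identification is recorded, your annihilator argument (that the $A$-action on $L^{(m)}=TW$ factors through $A/\Ann_A W$) and the surjectivity check complete the proof; so this is a matter of making the statement of assertion $(2)$ precise rather than a gap in your reasoning.
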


For each $m \geq 1$, tensoring the exact sequence
$$ 0 \to L^{(m)} \to S^{(m)} \to S^{(m+1)} \to 0$$
and the surjective homomorphism $\theta_m:D^{(m)}(-m) \to L^{(m)}$ of graded $T$-modules with $A/I$, we get the exact sequence
$$ 0 \to K^{(m)}(-m) \to D^{(m)}/ID^{(m)}(-m) \overset{\varphi_{m}}{\to} S^{(m)}/IS^{(m)} \to S^{(m+1)}/IS^{(m+1)} \to 0 \ \ \ \ (\dagger_m) $$
of graded $F$-modules where $K^{(m)}=\Ker \varphi_{m}$.

Notice here that, for all $m \geq 1$, we have $\Ass_T K^{(m)} \subseteq \{\m T\}$ because $D^{(m)}/ID^{(m)} \cong (I^{m+1}/QI^m+I^{m+2})[X_1,X_2,\cdots,X_d]$ is a maximal Cohen-Macaulay module over $F$.

We then have the following.

\begin{prop}
The following assertions hold true:
\begin{itemize}
\item[$(1)$] We have 
\begin{eqnarray*}
\ell_A(I^n/I^{n+1}) &=& \{\ell_A(A/I^2+Q)+\sum_{m=1}^{r_I-1}\ell_A(I^{m+1}/QI^m+I^{m+2})\}\binom{n+d-1}{d-1}\\
&+& \sum_{k=1}^{r_I}(-1)^k\left\{\sum_{m=k-1}^{r_I-1}\binom{m+1}{k}\ell_A(I^{m+1}/QI^m+I^{m+2})\right\}\binom{n+d-k-1}{d-k-1}\\
&-& \sum_{m=-1}^{r_I-1}\ell_A(K^{(m)}_{n-m-1})
\end{eqnarray*}
for all $n \geq \max\{0, r_I-d+1\}$.
\item[$(2)$] $\displaystyle \e_0(I)=\ell_A(A/I^2+Q)+\sum_{m=1}^{r_I-1}\ell_A(I^{m+1}/QI^m+I^{m+2})-\sum_{m=-1}^{r_I-1}\ell_{T_{\mathcal{P}}}(K^{(m)}_{\mathcal P})$ where ${\mathcal P}=\m T $.
\end{itemize}
\end{prop}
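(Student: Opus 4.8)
The plan is to feed Vaz Pinto's filtration of the Sally module into Proposition 3.3 and then convert everything into the stated binomial form. First I would telescope the exact sequences $(\dagger_m)$ for $m \geq 1$. Taking degree-$n$ components and using additivity of length, each sequence gives
$$\ell_A([S^{(m)}/IS^{(m)}]_n) - \ell_A([S^{(m+1)}/IS^{(m+1)}]_n) = \ell_A([D^{(m)}/ID^{(m)}]_{n-m}) - \ell_A(K^{(m)}_{n-m}).$$
Since $I^{m+1}=QI^m$ for $m \geq r_I$, both $D^{(m)}$ and $S^{(m)}$ vanish in that range, so summing over $m$ telescopes (using $S=S^{(1)}$ and the isomorphism $D^{(m)}/ID^{(m)} \cong (I^{m+1}/QI^m+I^{m+2})[X_1,\ldots,X_d]$, whose degree-$j$ part has length $\ell_A(I^{m+1}/QI^m+I^{m+2})\binom{j+d-1}{d-1}$) to
$$\ell_A([S/IS]_{n-1}) = \sum_{m=1}^{r_I-1}\Big[\ell_A(I^{m+1}/QI^m+I^{m+2})\binom{n+d-m-2}{d-1} - \ell_A(K^{(m)}_{n-m-1})\Big].$$

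Next I would substitute this into Proposition 3.3. The correction terms $-\ell_A(K^{(-1)}_n)$ and $-\ell_A(K^{(0)}_{n-1})$ together with $-\sum_{m=1}^{r_I-1}\ell_A(K^{(m)}_{n-m-1})$ assemble into the single sum $-\sum_{m=-1}^{r_I-1}\ell_A(K^{(m)}_{n-m-1})$, since $n-m-1$ evaluates correctly at $m=-1,0$. To rewrite the polynomial part, I would use the identity
$$\binom{n+d-m-2}{d-1} = \sum_{k=0}^{m+1}(-1)^k\binom{m+1}{k}\binom{n+d-k-1}{d-k-1},$$
which follows by comparing coefficients of $z^n$ in $\frac{z^{m+1}}{(1-z)^d} = \sum_{k=0}^{m+1}(-1)^k\binom{m+1}{k}(1-z)^{-(d-k)}$ (expand $z^{m+1}=((z-1)+1)^{m+1}$). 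After substituting and interchanging the order of summation, the $k=0$ contributions collect with $\ell_A(A/I^2+Q)$ into the leading coefficient $\ell_A(A/I^2+Q)+\sum_{m=1}^{r_I-1}\ell_A(I^{m+1}/QI^m+I^{m+2})$, while for $k \geq 1$ the coefficient of $\binom{n+d-k-1}{d-k-1}$ becomes $(-1)^k\sum_{m=k-1}^{r_I-1}\binom{m+1}{k}\ell_A(I^{m+1}/QI^m+I^{m+2})$. The term $-\ell_A(I/[I^2+Q])\binom{n+d-2}{d-2}$ coming from Proposition 3.3 is precisely the $m=0$ summand missing from the $k=1$ coefficient (note $QI^0+I^2=Q+I^2$), which extends that inner sum down to $m=0$ and completes part (1); the range $n \geq \max\{0, r_I-d+1\}$ is exactly where the shifted binomials stay in their polynomial regime so the identity may be applied termwise.

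For part (2), I would extract the coefficient of $\binom{n+d-1}{d-1}$ in $\ell_A(I^n/I^{n+1})$, which equals $\e_0(I)$. The polynomial part of (1) contributes $\ell_A(A/I^2+Q)+\sum_{m=1}^{r_I-1}\ell_A(I^{m+1}/QI^m+I^{m+2})$, since the $k \geq 1$ terms have strictly lower degree in $n$. For the correction terms, since $\Ass_T K^{(m)} \subseteq \{\mathcal P\}$ with $\mathcal P = \m T$ and $\dim T/\mathcal P = d$, each $K^{(m)}$ has dimension at most $d$ and is supported on $V(\mathcal P)$, so by the associativity formula for multiplicities its Hilbert function grows like $\ell_{T_{\mathcal P}}(K^{(m)}_{\mathcal P})\binom{j+d-1}{d-1}$ to leading order; the grading shift by $m+1$ leaves this leading binomial unchanged. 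Hence each $-\ell_A(K^{(m)}_{n-m-1})$ contributes $-\ell_{T_{\mathcal P}}(K^{(m)}_{\mathcal P})$, yielding the formula in (2).

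The main obstacle is the combinatorial bookkeeping in part (1): correctly interchanging the double sum after the binomial expansion and verifying that the single $-\ell_A(I/[I^2+Q])\binom{n+d-2}{d-2}$ term from Proposition 3.3 supplies exactly the $m=0$ boundary summand of the $k=1$ coefficient. For part (2), the one delicate point is justifying that the leading growth of $\ell_A(K^{(m)}_j)$ is governed by the length $\ell_{T_{\mathcal P}}(K^{(m)}_{\mathcal P})$ at $\mathcal P=\m T$, which rests on $K^{(m)}$ being supported precisely on $V(\mathcal P)$ of dimension $d$.
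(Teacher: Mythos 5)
Your proposal is correct and takes essentially the same route as the paper's own proof: telescoping the degree-wise length identities from the sequences $(\dagger_m)$ for $1 \leq m \leq r_I-1$ (using $S^{(r_I)}=0$), substituting into Proposition 3.3, expanding $\binom{n+d-m-2}{d-1}$ by the alternating binomial identity and interchanging sums, with the term $-\ell_A(I/[I^2+Q])\binom{n+d-2}{d-2}$ supplying exactly the missing $m=0$ summand of the $k=1$ coefficient. For part (2) the paper likewise extracts the leading term of the Hilbert function of each $K^{(m)}$, supported at $\mathcal{P}=\m T$ with $T/\mathcal{P}$ a $d$-dimensional polynomial ring, citing the standard multiplicity result that you justify via the associativity formula.
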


\begin{proof}
We have, for all $n \geq 0$,
\begin{eqnarray*}
&&\ell_A([S^{(m)}/IS^{(m)}]_{n-1})\\
&=&\ell_A([D^{(m)}/ID^{(m)}]_{n-m-1})+\ell_A([S^{(m+1)}/IS^{(m+1)}]_{n-1})-\ell_A(K^{(m)}_{n-m-1})
\end{eqnarray*}
by the exact sequence $(\dagger_m)$ for $1 \leq m \leq r_I-2$ and
$$ \ell_A([S^{(r_I-1)}/IS^{(r_I-1)}]_{n-1})=\ell_A([D^{(r_I-1)}/ID^{(r_I-1)}]_{n-r_I})-\ell_A(K^{(r_I-1)}_{n-r_I})$$
by the exact sequence $(\dagger_{r_I-1})$.
We then have, by the inductive steps, 
$$\ell_A([S^{(1)}/IS^{(1)}]_{n-1}) = \sum_{m=1}^{r_I-1}\ell_A([D^{(m)}/ID^{(m)}]_{n-m-1})-\sum_{m=1}^{r_I-1}\ell_A(K^{(m)}_{n-m-1}).$$
We furthermore have 
\begin{eqnarray*}
&& \sum_{m=1}^{r_I-1}\ell_A([D^{(m)}/ID^{(m)}]_{n-m-1})\\
&=& \sum_{m=1}^{r_I-1}(-1)^k \left\{\sum_{k=0}^{m+1}\binom{m+1}{k}\ell_A(I^{m+1}/QI^m+I^{m+2})\right\} \binom{n+d-k-1}{d-k-1}\\ 
&=& \sum_{m=1}^{r_I-1}\ell_A(I^{m+1}/QI^m+I^{m+2})\binom{n+d-1}{d-1}\\
& -& \sum_{m=1}^{r_I-1}(m+1)\ell_A(I^{m+1}/QI^m+I^{m+2})\binom{n+d-2}{d-2}\\
& +& \sum_{k=2}^{r_I}(-1)^k \left\{ \sum_{m=k-1}^{r_I-1}\binom{m+1}{k}\ell_A(I^{m+1}/QI^m+I^{m+2})\right\}\binom{n+d-k-1}{d-k-1} 
\end{eqnarray*}
for all $n \geq \min\{0,r_I-d+1\}$, because 
\begin{eqnarray*}
&&\ell_A([D^{(m)}/ID^{(m)}]_{n-m-1})\\
&=&\ell_A(I^{m+1}/QI^m+I^{m+2})\binom{n-(m+1)+d-1}{d-1}\\
&=&\sum_{k=0}^{m+1}(-1)^k \binom{m+1}{k}\ell_A(I^{m+1}/QI^m+I^{m+2})\binom{n+d-k-1}{d-k-1}
\end{eqnarray*}
for all $n \geq \min\{0, m-d+2\}$ for $1 \leq m \leq r_I-1$.
Thus, we get the required equality by Proposition 3.3.

\noindent
$(2)$: We recall that $\Ass_T K^{(m)} \subseteq \{{\mathcal P}\}$ for all $-1 \leq m \leq r_I-1$ and $T/{\mathcal P} \cong (A/\m)[X_1,X_2,\cdots,X_d]$ is a polynomial ring with $d$ indeterminates over the field $A/\m$. 
Therefore, for $-1 \leq m \leq r_I-1$, thanks to [2, Corollary 4.7.8] $($see [12, Theorem 14.7] also$)$, the equality
$$ \ell_A(K^{(m)}_{n-m-1})=  \ell_{T_{\mathcal{P}}}(K_{\mathcal P}^{(m)})\binom{n+d-1}{d-1}+\mbox{$($lower terms$)$}$$
holds true for all $n \gg 0$.
Thus, we get the required equality by assertion $(1)$.
\end{proof}

Let us give one remark.

\begin{rem}
Let $n \geq 1$ be an integer.
Then $Q^{n} \cap I^{n+1}=Q^nI$ holds true if and only if $K^{(-1)}_n=(0)$.
\end{rem}

The following proposition plays a key role for a proof of the main theorem $($Theorem 1.1$)$ of this paper.

\begin{prop}
Let $m \geq 0$ and $n \geq 1$ be integers. 
Then $Q^{n}I^{m+1} \cap I^{n+m+2} \subseteq Q^{n+1}I^m+Q^{n}I^{m+2}$ holds true if $K^{(m)}_n=(0)$.
\end{prop}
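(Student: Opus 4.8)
The plan is to read the defining four-term exact sequence of $K^{(m)}$ in a single graded degree and convert the vanishing $K^{(m)}_n=(0)$ into the asserted containment by an explicit diagram chase. For $m\geq 1$ I would use $(\dagger_m)$ and for $m=0$ the sequence $(\dagger_0)$; both run through the same mechanism, so let me describe $m\geq 1$ first. Because of the twist, the degree-$(n+m)$ component of $K^{(m)}(-m)$ is precisely $K^{(m)}_n$, so I would take the degree-$(n+m)$ strand of $(\dagger_m)$ and feed in the identifications coming from Lemma 3.5 and the definition of $\theta_m$ in Lemma 3.6: namely $[D^{(m)}/ID^{(m)}(-m)]_{n+m}\cong\bigoplus_{|\alpha|=n}\bigl(I^{m+1}/(QI^m+I^{m+2})\bigr)X^{\alpha}$, $S^{(m)}_{n+m}\cong I^{n+m+1}/Q^{n+1}I^m$, and $[IS^{(m)}]_{n+m}\cong (I^{n+m+2}+Q^{n+1}I^m)/Q^{n+1}I^m$, whence $[S^{(m)}/IS^{(m)}]_{n+m}\cong I^{n+m+1}/(I^{n+m+2}+Q^{n+1}I^m)$. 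Under these identifications $\varphi_m$ is the map sending $\overline{\xi_\alpha}\,X^{\alpha}$ to the class of $a^{\alpha}\xi_\alpha$, where $a^{\alpha}=a_1^{\alpha_1}\cdots a_d^{\alpha_d}$.

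With this in hand the chase is short. Let $w\in Q^nI^{m+1}\cap I^{n+m+2}$ and write $w=\sum_{|\alpha|=n}a^{\alpha}\xi_\alpha$ with $\xi_\alpha\in I^{m+1}$, which is possible because $Q^n$ is generated by the degree-$n$ monomials $a^{\alpha}$. Let $\eta$ be the class of $\sum_{|\alpha|=n}\overline{\xi_\alpha}\,X^{\alpha}$ in $[D^{(m)}/ID^{(m)}(-m)]_{n+m}$. Then $\varphi_m(\eta)$ is the class of $w$ in $I^{n+m+1}/(I^{n+m+2}+Q^{n+1}I^m)$, which is $0$ since $w\in I^{n+m+2}$; hence $\eta\in\Ker\varphi_m=K^{(m)}_n$. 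The hypothesis $K^{(m)}_n=(0)$ forces $\eta=0$, and as the ambient module is a direct sum indexed by the monomials $X^{\alpha}$ every coefficient must vanish, i.e. $\xi_\alpha\in QI^m+I^{m+2}$ for all $\alpha$. Multiplying back gives $w=\sum_{|\alpha|=n}a^{\alpha}\xi_\alpha\in Q^n(QI^m+I^{m+2})=Q^{n+1}I^m+Q^nI^{m+2}$, the required containment. The case $m=0$ is verbatim the same after replacing $D^{(m)}/ID^{(m)}(-m)$ by $([R/IR+T]_1\otimes F)(-1)$ and $S^{(m)}/IS^{(m)}$ by $R/IR+T$ in $(\dagger_0)$: one uses $[R/IR+T]_{n+1}\cong I^{n+1}/(I^{n+2}+Q^{n+1})$ with coefficient module $I/(I^2+Q)$, and the same computation yields $w\in Q^{n+1}+Q^nI^2$.

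The element-level chase is routine; the part that needs care, and where I expect the real work to lie, is pinning down the three graded identifications above—above all, verifying that the connecting map $\varphi_m$ induced by $\theta_m$ is genuinely ``multiply by $a^{\alpha}$ and reduce,'' and that $I^{n+m+2}$ sits inside the submodule $[IS^{(m)}]_{n+m}$ so that the class of $w$ dies in $S^{(m)}/IS^{(m)}$. These are bookkeeping consequences of the definitions of $S^{(m)}$, $L^{(m)}=TS^{(m)}_m$, $D^{(m)}$ and $\theta_m$, but the shift by $-m$ and the two competing denominators ($Q^{n+1}I^m$ inside $S^{(m)}$ versus $Q^nI^{m+1}$ inside $S^{(m+1)}$) make an off-by-one slip easy; I would check each identification against Lemma 3.5 before running the chase.
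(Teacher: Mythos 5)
Your proposal is correct and is essentially the paper's own proof: the paper likewise writes $x=\sum_{|\gamma|=n}y_\gamma a_1^{\gamma_1}\cdots a_d^{\gamma_d}$ with $y_\gamma\in I^{m+1}$, forms $f=\sum\overline{y_\gamma}\otimes X^\gamma$ in $[D^{(m)}/ID^{(m)}]_n$ (resp.\ in $[(I/I^2+Q)\otimes F]_n$ when $m=0$), uses $[S^{(m)}/IS^{(m)}]_{n+m}\cong I^{n+m+1}/(Q^{n+1}I^m+I^{n+m+2})$ to see $\varphi_m(f)=\overline{xt^{n+m}}=0$, concludes $f\in K^{(m)}_n=(0)$ so each $y_\gamma\in QI^m+I^{m+2}$, and multiplies back. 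The graded identifications you flagged as needing care are exactly the ones the paper invokes, and they hold as you stated them.
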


\begin{proof}
Suppose that $K_n^{(0)}=(0)$ for $n \geq 1$.
Take $x \in Q^{n}I \cap I^{n+2}$ and write $x=\sum_{|\gamma|=n}y_{\gamma}a_1^{\gamma_1}a_2^{\gamma_2}\cdots a_d^{\gamma_d}$ where $y_{\gamma} \in I$, $\gamma=(\gamma_1,\gamma_2,\cdots,\gamma_d) \in \Z^d_{\geq 0}$, and $|\gamma|=\sum_{k=1}^d \gamma_k$.
Let $$f=\sum_{|\gamma|=n}\overline{y_{\gamma}} \otimes X_1^{\gamma_1}X_2^{\gamma_2}\cdots X_d^{\gamma_d} \in [(I/I^2+Q)\otimes F]_{n}$$ where $\overline{y_{\gamma}}$ denotes the image of $y_{\gamma} \in I$ in $I/I^2+Q$.
Let us look at the homomorphism
$$\varphi_0 : (I/I^2+Q) \otimes F(-1) \to  R/IR+T$$
of graded $F$-modules.
Then we have 
$$\varphi_0(f)=\sum_{|\gamma|=n}\overline{y_{\gamma}t}(\overline{a_1t})^{\gamma_1}(\overline{a_2t})^{\gamma_2} \cdots (\overline{a_dt})^{\gamma_d}=\overline{xt^{n+1}}=0$$ 
because $x \in I^{n+2}$ and $[R/IR+T]_{n+1} \cong I^{n+1}/I^{n+2}+Q^{n+1}$.
Hence $f \in K^{(0)}$ so that $f=0$.
Therefore, we get $y_{\gamma} \in I^2+Q$ for all $\gamma$ with $|\gamma|=n$ and hence $$x = \sum_{|\gamma|=n}y_{\gamma}a_1^{\gamma_1}a_2^{\gamma_2}\cdots a_d^{\gamma_d}\in Q^n[I^2+Q]=Q^{n}I^2+Q^{n+1}.$$

Suppose that $K_n^{(m)}=(0)$ for $m, n \geq 1$.
Take $x \in Q^{n}I^{m+1} \cap I^{n+m+2}$ and write $x=\sum_{|\gamma|=n}y_{\gamma}a_1^{\gamma_1}a_2^{\gamma_2}\cdots a_d^{\gamma_d}$ where $y_{\gamma} \in I^{m+1}$, $\gamma=(\gamma_1,\gamma_2,\cdots,\gamma_d) \in \Z^d_{\geq 0}$, and $|\gamma|=\sum_{k=1}^d \gamma_k$.
Let $$f=\sum_{|\gamma|=n}\overline{y_{\gamma}} \otimes X_1^{\gamma_1}X_2^{\gamma_2}\cdots X_d^{\gamma_d} \in [D^{(m)}/ID^{(m)}]_{n}$$ where $\overline{y_{\gamma}}$ denotes the image of $y_{\gamma} \in I^{m+1}$ in $I^{m+1}/QI^m+I^{m+2}$.

Let us look at the homomorphism
$$\varphi_{m} : [D^{(m)}/ID^{(m)}](-m) \to  S^{(m)}/IS^{(m)}$$
of graded $F$-modules.
Then we have $$\varphi_{m}(f)=\sum_{|\gamma|=n}\overline{y_{\gamma}t^m}(\overline{a_1t})^{\gamma_1}(\overline{a_2t})^{\gamma_2} \cdots (\overline{a_dt})^{\gamma_d}=\overline{xt^{n+m}}=0$$ because $x \in I^{n+m+2}$ and $[S^{(m)}/IS^{(m)}]_{n+m} \cong I^{n+m+1}/Q^{n+1}I^m+I^{n+m+2}$.
Hence $f \in K^{(m)}$ so that $f=0$.
Thus, we get $y_{\gamma} \in I^{m+2}+QI^{m}$ for all $\gamma$ with $|\gamma|=n$ and hence $$x = \sum_{|\gamma|=n}y_{\gamma}a_1^{\gamma_1}a_2^{\gamma_2}\cdots a_d^{\gamma_d}\in Q^n[I^{m+2}+QI^{m}]=Q^{n+1}I^m+Q^{n}I^{m+2}$$
as required.
\end{proof}

We then have the following corollary.

\begin{cor}
Let $n \geq \ell \geq 1$ be integers.
Assume that $K^{(m)}_{n-m-1}=(0)$ for all $-1 \leq m \leq n-\ell-1$.
Then we have $Q^{\ell}I^{n-\ell} \cap I^{n+1}=Q^{\ell}I^{n-\ell+1}$.
We especially have $\beta_n=0$ if $K^{(m)}_{n-m-1}=(0)$ for all $-1 \leq m \leq n-2$.
\end{cor}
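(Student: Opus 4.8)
The reverse inclusion $Q^{\ell}I^{n-\ell+1} \subseteq Q^{\ell}I^{n-\ell} \cap I^{n+1}$ is immediate from $Q \subseteq I$, so the plan is to prove the inclusion $Q^{\ell}I^{n-\ell} \cap I^{n+1} \subseteq Q^{\ell}I^{n-\ell+1}$ by a descending induction. Writing $C_j = Q^{j}I^{n-j} \cap I^{n+1}$ for $\ell \leq j \leq n$, the target is $C_\ell \subseteq Q^{\ell}I^{n-\ell+1}$, and I would in fact establish the whole family $C_j \subseteq Q^{j}I^{n-j+1}$ for every $\ell \leq j \leq n$.

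First I would reindex Proposition 3.9: substituting $n-j-1$ for the superscript $m$ and $j$ for the lower index, its hypothesis reads $K^{(n-j-1)}_{j}=(0)$ — precisely the instance of $K^{(m)}_{n-m-1}=(0)$ at $m=n-j-1$ — and, since $(n-j-1)+j+2=n+1$, its conclusion becomes
$$C_j = Q^{j}I^{n-j} \cap I^{n+1} \subseteq Q^{j+1}I^{n-j-1} + Q^{j}I^{n-j+1} \qquad (1 \leq j \leq n-1).$$
As $j$ runs over $\ell \leq j \leq n-1$, the value $m=n-j-1$ runs over $0 \leq m \leq n-\ell-1$, all of which are covered by the hypothesis of the corollary.

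The key step is a bootstrap turning this inclusion into a relation between $C_j$ and $C_{j+1}$. Given $x \in C_j$, write $x = u + v$ with $u \in Q^{j+1}I^{n-j-1}$ and $v \in Q^{j}I^{n-j+1}$. Since $v \in Q^{j}I^{n-j+1} \subseteq I^{n+1}$ and $x \in I^{n+1}$, also $u = x-v \in I^{n+1}$, whence $u \in Q^{j+1}I^{n-j-1} \cap I^{n+1} = C_{j+1}$ and therefore $C_j \subseteq C_{j+1} + Q^{j}I^{n-j+1}$. Now I would run the induction downward in $j$. The base case $j=n$ is $C_n = Q^{n} \cap I^{n+1} = Q^{n}I = Q^{n}I^{n-n+1}$, which is Remark 3.8 applied to the $m=-1$ instance $K^{(-1)}_{n}=(0)$ of the hypothesis. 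For the step from $j+1$ to $j$, combining the inductive hypothesis $C_{j+1} \subseteq Q^{j+1}I^{n-j}$ with $Q^{j+1}I^{n-j} \subseteq Q^{j}I^{n-j+1}$ (again using $Q \subseteq I$) gives $C_j \subseteq Q^{j+1}I^{n-j} + Q^{j}I^{n-j+1} = Q^{j}I^{n-j+1}$. Specializing to $j=\ell$ yields the first assertion; the choice $\ell=1$ gives $QI^{n-1} \cap I^{n+1} = QI^{n}$, i.e. $\beta_n = 0$, under the vanishing for $-1 \leq m \leq n-2$.

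I expect the only genuine subtlety to be the careful bookkeeping of index ranges: one must verify that the reindexed Proposition 3.9 consumes precisely the vanishings $K^{(m)}_{n-m-1}=(0)$ for $0 \leq m \leq n-\ell-1$, while the remaining instance $m=-1$ is exactly what the base case draws from Remark 3.8. The bootstrap observation that the cross term $u=x-v$ lands back in $C_{j+1}$ is what lets the induction close, so I would state it explicitly rather than leave it implicit.
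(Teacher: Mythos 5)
Your proof is correct and takes essentially the same route as the paper: the paper also argues by descending induction (on $\ell$, with base case $\ell=n$ settled by Remark 3.8 via $K^{(-1)}_{n}=(0)$), and its inductive step is exactly your reindexed Proposition 3.9 combined with the decomposition $C_j \subseteq C_{j+1}+Q^{j}I^{n-j+1}$, which the paper writes compactly as the equality $Q^{\ell}I^{n-\ell}\cap I^{n+1}=Q^{\ell+1}I^{n-\ell-1}\cap I^{n+1}+Q^{\ell}I^{n-\ell+1}$. The only difference is presentational: you make the modular-law bootstrap and the index bookkeeping explicit where the paper leaves them implicit.
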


\begin{proof}
We proceed by induction on $\ell$.
Suppose that $\ell=n$ then, because $K^{(-1)}_{\ell}=(0)$ by our assumption, we have $Q^{\ell}\cap I^{\ell+1}=Q^{\ell} I$ by Remark 3.8.

Assume that $\ell < n$ and that our assertion holds true for $\ell+1$.
We have $K_{n-m-1}^{(m)}=(0)$ for all $-1 \leq m \leq n-(\ell+1)-1$ so that, by the hypothesis of induction on $\ell$, we get $Q^{\ell+1}I^{n-\ell-1} \cap I^{n+1}=Q^{\ell+1}I^{n-\ell}$.
Since $K^{(n-\ell-1)}_{\ell}=(0)$ by our assumption, we get the equalities $$Q^{\ell}I^{n-\ell} \cap I^{n+1}=Q^{\ell+1}I^{n-\ell-1} \cap I^{n+1}+Q^{\ell}I^{n-\ell+1}=Q^{\ell}I^{n-\ell+1}$$
by Proposition 3.9.
\end{proof}

\section{Proof of Main Theorem}

In this section, let us introduce a proof of Theorem 1.1.

Let us begin with the following proposition.

\begin{prop}
Suppose that $I$ is stretched. 
Then the following assertions hold true:
\begin{itemize}
\item[$(1)$] We have 
\begin{eqnarray*}
\ell_A(A/I^{n+1}) &=& \{\e_0(I)+r_I-n_I\}\binom{n+d}{d}\\
&-&\{\e_0(I)-\ell_A(A/I)+\binom{r_I}{2}+r_I-n_I\}\binom{n+d-1}{d-1}\\
&+& \sum_{k=2}^{r_I}(-1)^k\binom{r_I+1}{k+1}\binom{n+d-k}{d-k}- \sum_{m=-1}^{r_I-1}\sum_{i=0}^n \ell_A(K^{(m)}_{i-m-1})
\end{eqnarray*}
for all $n \geq \max\{0, r_I-d\}$.
\item[$(2)$] $\displaystyle \sum_{m=-1}^{r_I-1}\ell_{T_{\mathcal{P}}}(K^{(m)}_{\mathcal P})=r_I-n_I=|\Lambda|$ where ${\mathcal P}=\m T $.
\end{itemize}
\end{prop}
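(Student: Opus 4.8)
The plan is to reduce both assertions to a few elementary length identities, after which (2) drops straight out of Proposition 3.7 (2) and (1) is obtained by summing the \emph{exact} formula of Proposition 3.3. The length inputs are these. Since $Q$ is a parameter ideal in the Cohen--Macaulay ring $A$ which is a reduction of $I$, we have $\ell_A(A/Q)=\e_0(Q)=\e_0(I)$, hence $\ell_A(I/Q)=\e_0(I)-\ell_A(A/I)$. Combining this with Lemma 2.2 (1), which gives $\ell_A(I^2+Q/Q)=\alpha_1=n_I-1$, yields
$$\ell_A(I/I^2+Q)=\e_0(I)-\ell_A(A/I)-n_I+1 \quad\text{and}\quad \ell_A(A/I^2+Q)=\e_0(I)-n_I+1.$$
I also record that $c_m:=\ell_A(I^{m+1}/QI^m+I^{m+2})=1$ for all $1\le m\le r_I-1$: for $1\le m\le r_I-2$ this is the equality extracted in the proof of Lemma 2.2 (2), while for $m=r_I-1$ one has $I^{r_I+1}=QI^{r_I}\subseteq QI^{r_I-1}$, so $c_{r_I-1}=\ell_A(I^{r_I}/QI^{r_I-1})=\alpha_{r_I-1}=1$ by the computation in the proof of Proposition 2.3. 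In particular $\sum_{m=1}^{r_I-1}c_m=r_I-1$.

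With these in hand, (2) is immediate. Proposition 3.7 (2) reads $\e_0(I)=\ell_A(A/I^2+Q)+\sum_{m=1}^{r_I-1}c_m-\sum_{m=-1}^{r_I-1}\ell_{T_{\mathcal P}}(K^{(m)}_{\mathcal P})$. Substituting $\ell_A(A/I^2+Q)=\e_0(I)-n_I+1$ and $\sum_{m=1}^{r_I-1}c_m=r_I-1$ and cancelling $\e_0(I)$ gives $\sum_{m=-1}^{r_I-1}\ell_{T_{\mathcal P}}(K^{(m)}_{\mathcal P})=r_I-n_I$, which equals $|\Lambda|$ by Proposition 2.3 (1).

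For (1) I would sum Proposition 3.3, valid for every $n\ge0$, over $i=0,\dots,n$, using $\sum_{i=0}^n\binom{i+d-1}{d-1}=\binom{n+d}{d}$ and $\sum_{i=0}^n\binom{i+d-2}{d-2}=\binom{n+d-1}{d-1}$, and then expand the remaining term $\sum_{i=0}^n\ell_A([S/IS]_{i-1})$ through the filtration $(\dagger_m)$. The exact identity $\ell_A([S/IS]_{i-1})=\sum_{m=1}^{r_I-1}\ell_A([D^{(m)}/ID^{(m)}]_{i-m-1})-\sum_{m=1}^{r_I-1}\ell_A(K^{(m)}_{i-m-1})$ derived inductively in the proof of Proposition 3.7 lets me replace $\ell_A([D^{(m)}/ID^{(m)}]_{i-m-1})$ by its exact value $c_m\binom{i-m-1+d-1}{d-1}$ and sum, producing $c_m\binom{n+d-m-1}{d}$ for $n\ge r_I-d$. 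Expanding each $\binom{n+d-m-1}{d}$ via the polynomial identity $\binom{n+d-(m+1)}{d}=\sum_{k=0}^{m+1}(-1)^k\binom{m+1}{k}\binom{n+d-k}{d-k}$, interchanging the order of summation, inserting $c_m=1$, and applying the hockey-stick identity $\sum_{m=k-1}^{r_I-1}\binom{m+1}{k}=\binom{r_I+1}{k+1}$ for $2\le k\le r_I$, I collect the coefficient of each $\binom{n+d-k}{d-k}$. The coefficient of $\binom{n+d}{d}$ becomes $\ell_A(A/I^2+Q)+(r_I-1)=\e_0(I)+r_I-n_I$; that of $\binom{n+d-1}{d-1}$ becomes $-\ell_A(I/I^2+Q)-\bigl(\binom{r_I+1}{2}-1\bigr)=-\{\e_0(I)-\ell_A(A/I)+\binom{r_I}{2}+r_I-n_I\}$ after using $\binom{r_I+1}{2}=\binom{r_I}{2}+r_I$; and for $2\le k\le r_I$ it is $(-1)^k\binom{r_I+1}{k+1}$. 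The $K$-contributions, together with the summed $m=-1,0$ terms $\sum_{i=0}^n\ell_A(K^{(-1)}_i)$ and $\sum_{i=0}^n\ell_A(K^{(0)}_{i-1})$ coming from Proposition 3.3, assemble into $-\sum_{m=-1}^{r_I-1}\sum_{i=0}^n\ell_A(K^{(m)}_{i-m-1})$, which is precisely the asserted formula.

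The one genuinely delicate point is the validity range. Working from the exact Proposition 3.3 rather than from the polynomial shape of Proposition 3.7 (1) is what prevents a spurious additive constant from entering the summation; the only restriction $n\ge\max\{0,r_I-d\}$ arises from the hockey-stick evaluation $\sum_{j=0}^{n-m-1}\binom{j+d-1}{d-1}=\binom{n+d-m-1}{d}$, which needs $n\ge m+1-d$ for each $m\le r_I-1$. The remaining work is the purely mechanical coefficient bookkeeping in the double sum over $k$ and $m$; the only conceptual inputs are the two length identities above and the values $c_m=1$.
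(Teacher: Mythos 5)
Your proposal is correct and follows essentially the same route as the paper: part (2) is the same substitution of $\ell_A(A/I^2+Q)=\e_0(I)-n_I+1$ and $\sum_{m=1}^{r_I-1}\ell_A(I^{m+1}/QI^m+I^{m+2})=r_I-1$ into Proposition 3.7 (2), and part (1) rests on the same length identities (Lemmas 2.1, 2.2) and the same Sally-module filtration expansion, differing only in that you sum the exact identity of Proposition 3.3 over $i=0,\dots,n$ before converting to binomial coefficients, whereas the paper first packages the polynomial formula (Proposition 3.7 (1)) and then sums. Your ordering is in fact the more careful one, since the hockey-stick evaluation is precisely what justifies the stated validity range $n\geq\max\{0,r_I-d\}$ — a point the paper's own proof passes over silently when it substitutes a formula valid only for $i\geq\max\{0,r_I-d+1\}$ into every term of the sum.
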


\begin{proof}
$(1)$: We have the equalities
\begin{eqnarray*}
\ell_A(A/I^2+Q)+\sum_{m=1}^{r_I-1}\ell_A(I^{m+1}/QI^m+I^{m+2})= \e_0(I)+r_I-n_I,
\end{eqnarray*}

\begin{eqnarray*}
&&\sum_{m=0}^{r_I-1}(m+1)\ell_A(I^{m+1}/QI^m+I^{m+2})\\
&=&\ell_A(I/I^2+Q)+\sum_{m=1}^{r_I-1}(m+1)\ell_A(I^{m+1}/QI^m+I^{m+2})\\
&=&\e_0(I)-\ell_A(A/I)-n_I+1+\sum_{m=1}^{r_I-1}(m+1)\\
&=&\e_0(I)-\ell_A(A/I)+\binom{r_I}{2}+r_I-n_I,
\end{eqnarray*}
and
$$\sum_{m=k-1}^{r_I-1}\binom{m+1}{k}\ell_A(I^{m+1}/QI^m+I^{m+2})=\binom{r_I+1}{k+1}$$
for $2 \leq k \leq r_I-1$, because 
\begin{eqnarray*}
\ell_A(A/I^2+Q)&=&\ell_A(A/Q)-\ell_A(I^2+Q/Q)=\e_0(I)-n_I+1,\\ 
\ell_A(I/I^2+Q)&=&\ell_A(A/I^2+Q)-\ell_A(A/I)=\e_0(I)-\ell_A(A/I)-n_I+1,
\end{eqnarray*} 
and $\ell_A(I^{m+1}/QI^m+I^{m+2})=1$ for $1 \leq m \leq r_I-1$ by Lemma 2.1 and 2.2.
Hence, we have
\begin{eqnarray*}
\ell_A(I^i/I^{i+1}) &=& \{\e_0(I)+r_I-n_I\}\binom{i+d-1}{d-1}\\
&-&\{\e_0(I)-\ell_A(A/I)+\binom{r_I}{2}+r_I-n_I\}\binom{i+d-2}{d-2}\\
&+& \sum_{k=2}^{r_I}(-1)^k\binom{r_I+1}{k+1}\binom{i+d-k-1}{d-k-1}- \sum_{m=-1}^{r_I-1}\ell_A(K^{(m)}_{i-m-1})
\end{eqnarray*}
for all $i \geq \max\{0, r_I-d+1\}$ by Proposition 3.7 $(1)$.
Then, taking $\ell_A(A/I^{n+1})=\sum_{i=0}^n\ell_A(I^i/I^{i+1})$, we get the required equality.

\noindent
$(2)$: We have  $$\e_0(I)=\e_0(I)+r_I-n_I-\sum_{m=-1}^{r_I-1}\ell_{T_{\mathcal{P}}}(K^{(m)}_{\mathcal P})$$ by Proposition 3.7 $(2)$.
Thus, the required equality holds true.
\end{proof}

Now we get the following result of Sally and Rossi-Valla as a corollary.

\begin{cor}$([19, \mathrm{Corollary} \ 2.4], [17, \mathrm{Theorem} \ 2.6])$
Suppose that $I$ is stretched. Then the equality $r_I=n_I$ holds true if and only if the associated graded ring ${G}$ is Cohen-Macaulay.
When this is the case the following assertions also follow.
\begin{itemize}
\item[$(1)$] $\displaystyle \e_1(I)=\e_0(I)-\ell_A(A/I)+\binom{n_I}{2}$.
\item[$(2)$] $\displaystyle \e_k(I)=\binom{n_I+1}{k+1}$ for $2 \leq k \leq d$.
\item[$(3)$] The Hilbert series $ HS_I(z)$ of $I$ is given by {\small { $$ HS_I(z)=\frac{\ell_A(A/I)+\{\e_0(I)-\ell_A(A/I)-n_I+1\}z+\sum_{2 \leq k \leq n_I}z^k}{(1-z)^d}.$$}} 
\end{itemize}
\end{cor}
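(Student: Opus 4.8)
The plan is to separate the qualitative equivalence from the numerical formulas, drawing everything out of Proposition 4.1 together with the intersection estimates of Sections 2 and 3. For the equivalence $r_I=n_I\iff G$ is Cohen--Macaulay, I would start from Proposition 2.3(1), which gives $|\Lambda|=r_I-n_I$; since $\beta_n\leq 1$ for every $n$ by Lemma 2.2(2), we have $|\Lambda|=\sum_{n\geq 1}\beta_n$, so $r_I=n_I$ is equivalent to $\beta_n=0$ for all $n\geq 1$, that is, to $QI^{n-1}\cap I^{n+1}=QI^n$ for all $n\geq 1$. To upgrade this to the Valabrega--Valla condition $Q\cap I^{n+1}=QI^n$, I would induct on $n$: the base $Q\cap I=Q$ is clear, and if $Q\cap I^n=QI^{n-1}$ then any $x\in Q\cap I^{n+1}\subseteq QI^{n-1}$ lies in $QI^{n-1}\cap I^{n+1}=QI^n$. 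Because $Q$ is generated by a regular sequence forming a reduction, the Valabrega--Valla criterion turns this into the statement that the initial forms of the $a_i$ constitute a $G$-regular sequence, hence $G$ is Cohen--Macaulay; the reverse implication runs backward along the same chain, since Cohen--Macaulayness yields $Q\cap I^{n+1}=QI^n$, hence $\beta_n=0$, hence $|\Lambda|=0$.

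For assertions (1) and (2) I would exploit the vanishing forced by Proposition 4.1(2). When $r_I=n_I$ its right-hand side is $0$, and since each $\ell_{T_{\mathcal{P}}}(K^{(m)}_{\mathcal{P}})$ is a non-negative integer, all of them vanish; as $\Ass_T K^{(m)}\subseteq\{\mathcal{P}\}$, the vanishing of $K^{(m)}_{\mathcal{P}}$ forces $K^{(m)}=0$ for every $-1\leq m\leq r_I-1$. Substituting $K^{(m)}=0$ into Proposition 4.1(1) leaves an honest polynomial in $n$; comparing it for $n\gg 0$ with $\sum_{k=0}^d(-1)^k\e_k(I)\binom{n+d-k}{d-k}$ and invoking the linear independence of the $\binom{n+d-k}{d-k}$ yields $\e_1(I)=\e_0(I)-\ell_A(A/I)+\binom{n_I}{2}$ and $\e_k(I)=\binom{n_I+1}{k+1}$ for $2\leq k\leq n_I$. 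Since $\binom{n_I+1}{k+1}=0$ as soon as $k>n_I$, this already records (2) over the whole range $2\leq k\leq d$.

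For the Hilbert series (3) I would avoid inverting the relation between the $\e_k$ and the $h$-vector, which is underdetermined by $\e_0,\dots,\e_d$ alone when $n_I>d$, and instead read the numerator off the Cohen--Macaulayness directly. Writing $h_I(z)=\sum_i h_iz^i=(1-z)^dHS_I(z)$, the regular sequence of initial forms identifies $h_i$ with the Hilbert function of the Artinian reduction, $h_i=\ell_A([G/QG]_i)=\ell_A(I^i/(QI^{i-1}+I^{i+1}))$. Lemma 2.1 and Lemma 2.2 then evaluate these lengths: $h_0=\ell_A(A/I)$, $h_1=\ell_A(I/(I^2+Q))=\e_0(I)-\ell_A(A/I)-n_I+1$, $h_i=1$ for $2\leq i\leq n_I$, and $h_i=0$ for $i>n_I$, the last because $I^i=QI^{i-1}$ once $i>r_I=n_I$. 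This assembles into exactly the stated numerator.

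The step I expect to be the main obstacle is the bridge in the first paragraph. The machinery naturally produces the intersection data in the weaker form $\beta_n=0$ (equivalently, via Corollary 3.10, in terms of the localized lengths $\ell_{T_{\mathcal{P}}}(K^{(m)}_{\mathcal{P}})$), whereas the Cohen--Macaulayness of $G$ is governed by the a priori stronger Valabrega--Valla condition $Q\cap I^{n+1}=QI^n$. Closing that gap by the short induction, and checking that the equivalence is genuinely biconditional in both directions, is the delicate conceptual point; by contrast the numerical content of (1)--(3) reduces to binomial bookkeeping controlled by Proposition 4.1 and the length computations of Section 2.
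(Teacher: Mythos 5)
Your proposal is correct. Since the paper offers no written proof of this corollary --- it presents the result as an immediate consequence of Proposition 4.1, citing Sally and Rossi--Valla for the equivalence itself --- the fair comparison is with the proof its machinery suggests, and there you diverge in two of the three components. For assertions $(1)$ and $(2)$ you do exactly what the paper intends: $r_I=n_I$ forces $\sum_{m}\ell_{T_{\mathcal P}}(K^{(m)}_{\mathcal P})=0$ by Proposition 4.1 $(2)$, hence $K^{(m)}=(0)$ for all $m$ because $\Ass_TK^{(m)}\subseteq\{{\mathcal P}\}$, and Proposition 4.1 $(1)$ then becomes a polynomial identity whose coefficients you compare. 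For the Cohen--Macaulayness, however, the paper's implicit argument (the one it gestures at in Proposition 4.3 and Theorem 1.1) is a Depth Lemma chase: with all $K^{(m)}=(0)$ the sequences $(\dagger_m)$ collapse to short exact sequences of modules of depth at least $d$, and running backwards from $S^{(r_I)}=(0)$ through $(\dagger_0)$ and $(\dagger_{-1})$ gives $\depth G\geq d$ directly. You instead upgrade $\beta_n=0$ to the Valabrega--Valla condition $Q\cap I^{n+1}=QI^n$ by your short induction and invoke the classical criterion; this buys you the converse almost for free, since $G$ Cohen--Macaulay makes the initial forms $a_1^*,\ldots,a_d^*$ a regular sequence, whence $\beta_n=0$ and $r_I-n_I=|\Lambda|=0$ by Proposition 2.3 $(1)$ --- a direction the paper simply outsources to its citations. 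The only step you should make explicit rather than assert is why Cohen--Macaulayness applies to this particular $Q$: the forms $a_1^*,\ldots,a_d^*$ are a homogeneous system of parameters of $G$ because $I^{r_I+1}=QI^{r_I}$ makes $G/(a_1^*,\ldots,a_d^*)G$ of finite length, and a system of parameters in a Cohen--Macaulay ring is a regular sequence. Similarly for $(3)$, the paper would extract the $h$-vector from the sequences $(\dagger_m)$, while you read it off the Artinian reduction $G/(a_1^*,\ldots,a_d^*)G$ via $h_i=\ell_A(I^i/(QI^{i-1}+I^{i+1}))$, evaluated by Lemma 2.1 $(3)$, Lemma 2.2 $(1)$, and $I^i=QI^{i-1}$ for $i>n_I$; your observation that $\e_0(I),\ldots,\e_d(I)$ alone underdetermine the series when $n_I>d$ is precisely the right reason why such a direct computation, rather than an inversion of $(1)$ and $(2)$, is needed.
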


Let $B=T/\m T \cong (A/\m)[X_1,X_2,\cdots,X_d]$ which is a  polynomial ring with $d$ indeterminates over the field $A/\m$. 

The following proposition plays an important role for our proof of Theorem 1.1.

\begin{prop}
Suppose that $I$ is stretched and assume that $r_I=n_I+1$.
We set $s=\min\{n \geq 1 \ | \ Q \cap I^{n+1} \neq QI^n \}$.
Then the following conditions hold true:
\begin{itemize}
\item[$(1)$]$ K^{(m)} \cong B(-s+m+1)$ as graded $T$-modules and $K^{(n)}=(0)$ for all $n \neq m$ for either of $m=-1$ or $m=0$.
\item[$(2)$] $\depth G =d-1$.
\end{itemize}
\end{prop}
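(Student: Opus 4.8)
The plan is to establish (1) first and then read off (2) from the resulting structure. To begin, I would count the nonzero kernels. By Proposition 4.1(2) we have $\sum_{m=-1}^{r_I-1}\ell_{T_{\mathcal P}}(K^{(m)}_{\mathcal P})=r_I-n_I=1$, where $\mathcal P=\m T$. Since $\Ass_T K^{(m)}\subseteq\{\mathcal P\}$ for every $m$, a kernel $K^{(m)}$ vanishes as soon as its localization $K^{(m)}_{\mathcal P}$ does; hence exactly one index $m_0$ satisfies $K^{(m_0)}\neq(0)$, with $\ell_{T_{\mathcal P}}(K^{(m_0)}_{\mathcal P})=1$, while $K^{(m)}=(0)$ for all $m\neq m_0$.

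Next I would pin down the module structure of $K^{(m_0)}$. From $\ell_{T_{\mathcal P}}(K^{(m_0)}_{\mathcal P})=1$ the localization is annihilated by $\mathcal P T_{\mathcal P}$; feeding this back through $\Ass_T K^{(m_0)}\subseteq\{\mathcal P\}$ gives $\mathcal P K^{(m_0)}=(0)$, so $K^{(m_0)}$ is a finitely generated graded $B$-module that is torsion-free of rank one over the polynomial ring $B$. A cyclic torsion-free rank-one module over the domain $B$ is free, so once cyclicity is known one gets $K^{(m_0)}\cong B(-a)$ with $a$ the bottom degree. To locate $m_0$ I would exploit the maps $\widehat{x}$ of Lemma 2.1(2): since the generator $x^{m}y$ of $I^{m+1}$ modulo $QI^{m}$ is $x$ times the generator $x^{m-1}y$ of $I^{m}$ modulo $QI^{m-1}$, multiplication by $\widehat{x}$ links the consecutive sequences $(\dagger_{m-1})$ and $(\dagger_m)$, and its surjectivity yields the implication $K^{(m-1)}=(0)\Rightarrow K^{(m)}=(0)$ for $1\le m\le r_I-1$. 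If $m_0\ge1$ then $K^{(m_0-1)}=(0)$ by uniqueness, whence $K^{(m_0)}=(0)$ by this implication, a contradiction; therefore $m_0\in\{-1,0\}$.

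Then I would compute the shift $a$, i.e. the least degree in which $K^{(m_0)}$ is nonzero, and show $a=s-m_0-1$. The chain in Corollary 3.10 together with $\beta_s\neq0$ (from $\Lambda=\{s\}$, Corollary 2.4) forces $K^{(m_0)}_{s-m_0-1}\neq(0)$, giving $a\le s-m_0-1$. For the reverse inequality I would compute the low-degree pieces of $K^{(m_0)}$ directly from $(\dagger_{m_0})$ — using $K^{(-1)}_n=(Q^n\cap I^{n+1})/Q^nI$ from Remark 3.8 and the analogous description of $K^{(0)}_n$ coming from $\varphi_0$ — and invoke the stretched structure (Lemmas 2.1 and 2.2) together with $s\ge2$, which holds because condition (1) of stretchedness forces $Q\cap I^2=QI$. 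This lower bound on the bottom degree is the crux of the whole argument: Corollary 3.10 produces vanishing of the $\beta_n$ from vanishing of kernels but not conversely, so the vanishing of the low graded pieces of the single surviving kernel must be extracted from its explicit description rather than from the $\beta$-data alone. This computation simultaneously yields cyclicity (one generator in degree $s-m_0-1$) and the isomorphism $K^{(m_0)}\cong B(-s+m_0+1)$.

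Finally, for (2) the upper bound $\depth G\le d-1$ is immediate from Corollary 4.2: since $r_I=n_I+1\neq n_I$, the ring $G$ is not Cohen-Macaulay. For $\depth G\ge d-1$ I would run the depth lemma through the sequences $(\dagger_{-1})$, $(\dagger_0)$ and $(\dagger_m)$. Every module occurring in them other than $G$ is maximal Cohen-Macaulay over $B$, hence of depth $d$: the polynomial ring $F$, the module $[R/IR+T]_1\otimes F$, each $D^{(m)}/ID^{(m)}\cong B$, and, by part (1), the unique nonzero kernel $K^{(m_0)}\cong B(-s+m_0+1)$. Splitting each four-term sequence into short exact sequences and applying the depth lemma (equivalently, checking that the local cohomology $H^i_{\mathcal P}$ vanishes below degree $d-1$ for all these terms) propagates depth $\ge d-1$ to $G$; the only care needed is to control $\depth(R/IR+T)$ and $\depth(S/IS)$, which one feeds respectively from $(\dagger_0)$ and from the filtration sequences $(\dagger_m)$.
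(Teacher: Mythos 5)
Your counting step (Proposition 4.1(2) plus $\Ass_T K^{(m)}\subseteq\{\mathcal P\}$ gives a unique nonzero kernel $K^{(m_0)}$, which is then a torsion-free rank-one graded $B$-module) is correct and matches the paper, but your step locating $m_0$ fails because your propagation implication is backwards. What multiplication by $x$ actually gives is: for $1\le m\le r_I-2$ the map $I^{m+1}/QI^m+I^{m+2}\to I^{m+2}/QI^{m+1}+I^{m+3}$ carries the generator $x^m y$ of a one-dimensional vector space to the generator $x^{m+1}y$, hence induces an isomorphism $D^{(m)}/ID^{(m)}\to D^{(m+1)}/ID^{(m+1)}$ compatible with $\varphi_m$ and $\varphi_{m+1}$; this injects $K^{(m)}$ into $K^{(m+1)}$, so $K^{(m)}\neq(0)\Rightarrow K^{(m+1)}\neq(0)$, i.e.\ nonvanishing propagates \emph{upward} (this is the paper's Claim). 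Your implication $K^{(m-1)}=(0)\Rightarrow K^{(m)}=(0)$ is the converse; to extract it from the commutative square you would need injectivity of the multiplication map $S^{(m-1)}/IS^{(m-1)}\to S^{(m)}/IS^{(m)}$, which fails in general, and its truth in the present setting is only known a posteriori, so using it is circular. With the correct implication, assuming $m_0\ge 1$ only produces a contradiction when $m_0\le r_I-2$; the remaining case $m_0=r_I-1$ must be excluded separately, via $m_0\le s-2\le r_I-3$ (Corollary 3.10 and $\beta_s\neq 0$) --- an ingredient you do derive, but only later and never for this purpose.

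More seriously, the identification $K^{(m_0)}\cong B(-s+m_0+1)$ is not actually proved. Determining the bottom degree of $K^{(m_0)}$ --- even showing its bottom piece is one-dimensional --- does not make a torsion-free rank-one graded $B$-module cyclic or free: the ideal $(X_1,X_2^2)\subseteq B$ (for $d\ge 2$) has one-dimensional bottom piece in degree one, rank one, and is not cyclic. The paper's mechanism is numerical and you never invoke it: write $K^{(m_0)}\cong \a(h)$ with $\a=B$ or $\height_B\a\ge 2$ (using that $B$ is a UFD), compute from Proposition 4.1(1) that $\e_1(I)=\e_0(I)-\ell_A(A/I)+\binom{n_I+1}{2}+1+(h-m_0-1)$, and compare with the upper bound $\e_1(I)\le \e_0(I)-\ell_A(A/I)+\binom{n_I+1}{2}-s+1$ coming from [9, Theorem 4.7] and Corollary 2.4. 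Since $K^{(m_0)}_{s-m_0-1}\neq(0)$ forces $h-m_0-1\ge -s$, equality holds, which pins down the shift and, because then $\a_0\cong K^{(m_0)}_{s-m_0-1}\neq(0)$, forces $\a=B$. Your proposed ``direct computation of low-degree pieces'' is not supplied and, even if carried out, would not yield cyclicity; this is precisely the missing global constraint. Your depth chase in (2) is fine in principle (the paper notes it as an alternative to reading $\depth G\ge d-1$ off the equality case of [9, Theorem 4.7]), but it presupposes part (1), so the gap propagates to (2) as well.
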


\begin{proof}
We have $2 \leq s \leq r_I-1$ and $\Lambda=\{s\}$ by Corollary 2.4 so that $\beta_s=1$.
Then we have $K^{(m)} \neq (0)$ for some $-1 \leq m \leq s-2$ by Corollary 3.10.
Because $\sum_{n=-1}^{r_I-1}\ell_{T_{\mathcal{P}}}(K^{(n)}_{\mathcal P})=r_I-n_I=1$ by Proposition 4.1 $(2)$, we get $\ell_{T_{\mathcal{P}}}(K^{(m)}_{\mathcal P})=1$ and $K^{(n)}=(0)$ for all $n \neq m$.

\begin{claim}
We have $-1 \leq m \leq 0$.
\end{claim}

\begin{proof}
Assume that $m \geq 1$. 
Then, $K^{(m)} \neq (0)$ implies that $K^{(m+1)} \neq (0)$ if $1 \leq m \leq r_I-2$, because $I^{n+1}/QI^n+I^{n+2}$ is generated by $x^ny$ as $A/\m$-vector space for $1 \leq n \leq r_I-1$ by Lemma 2.1 $(3)$.
Therefore, we have $m=r_I-1$, but it is impossible. 
Thus, we get $-1 \leq m \leq 0$.
\end{proof}
Then $K^{(m)}$ forms $B$-torsionfree module with $\rank_BK^{(m)}=1$ because $\Ass_{T} K^{(m)}=\{\mathcal{P}\}$ and $\ell_{T_{\mathcal{P}}}(K^{(m)}_{\mathcal{P}})=1$ so that $K^{(m)} \cong \a(h)$ as graded $B$-modules for some graded ideal $\a$ in $B$ and $h \in \mathbb{Z}$.
Because the ring $B$ is a unique factorization domain, we may choose $\a=B$ or $\mathrm{ht}_B \a \geq 2$.
Because $\a_{h+s-m-1} \cong K^{(m)}_{s-m-1} \neq (0)$ by Corollary 3.10, we have $h-m-1 \geq -s$. 
Then the equality
\begin{eqnarray*}
\sum_{i=0}^n\ell_A(K^{(m)}_{i-m-1})&=&\sum_{i=0}^n\ell_A(B_{i+h-m-1})-\sum_{i=0}^n\ell_A([B/\a]_{i+h-m-1})\\
&=& \binom{n+h-m-1+d}{d}-\sum_{i=0}^n\ell_A([B/\a]_{i+h-m-1})\\
&=& \binom{n+d}{d}+(h-m-1)\binom{n+d-1}{d-1}+\mbox{(lower terms)}
\end{eqnarray*}
holds true for all $n \gg 0$ by the canonical exact sequence
$$ 0 \to K^{(m)} \cong \a(h) \to B(h) \to (B/\a)(h) \to 0$$
of graded $B$-modules.
Hence, we have 
$$\e_1(I)=\e_0(I)-\ell_A(A/I)+\binom{n_I+1}{2}+1+(h-m-1) \geq  \e_0(I)-\ell_A(A/I)+\binom{n_I+1}{2}-s+1$$
by Proposition 4.1 $(1)$. 
Therefore, since $$\e_1(I) \leq \e_0(I)-\ell_A(A/I)+\binom{n_I+1}{2}-s+1$$ by [9, Theorem 4.7] and Corollary 2.4, the equalities $h-m-1=-s$ and 
$$\e_1(I)=\e_0(I)-\ell_A(A/I)+\binom{n_I+1}{2}-s+1$$ hold true.
Thus, we get $K^{(m)} \cong B(-s+m+1)$ because $\a_0=\a_{h-m-1+s} \cong K^{(m)}_{s-m-1} \neq (0)$, and $\depth G =d-1$ by [9, Theorem 4.7]. 
$($We can also get $\depth G=d-1$ by the exact sequences $(\dagger_m)$ for $-1 \leq m \leq r_I-1$ and the Depth Lemma$)$.
\end{proof}

\begin{proof}[Proof of Theorem 1.1]
We only need to show that assertions $(1)$, $(2)$, $(3)$, and $(4)$ hold true.
We notice that $\Lambda=\{s\}$ holds true.
Thanks to Proposition 4.3, we have
$$\sum_{m=-1}^{r_I-1}\sum_{i=0}^n\ell_A(K^{(m)}_{i-m-1})=\sum_{i=0}^n\ell_A(B_{i-s})=\binom{n-s+d}{d}=\sum_{k=0}^{s}(-1)^k\binom{s}{k}\binom{n+d-k}{d-k}$$
for all $n \geq \max\{0, s-d\}$.
Hence, by Proposition 4.1, we get
\begin{eqnarray*}
\ell_A(A/I^{n+1}) &=& \e_0(I)\binom{n+d}{d}-\{\e_0(I)-\ell_A(A/I)+\binom{n_I+1}{2}-s+1\}\binom{n+d-1}{d-1}\\
&+& \sum_{k=2}^{n_I}(-1)^k\left\{\binom{n_I+2}{k+1}-\binom{s}{k}\right\}\binom{n+d-k}{d-k}
\end{eqnarray*}
for all $n \geq \max\{0, n_I-d+1\}$.
Thus, we get the required Hilbert coefficients $\e_k(I)$ for $1 \leq k \leq d$.
We can also get the required equality of the Hilbert series $HS_I(z)$ of $I$ by the exact sequences $(\dagger_{m})$ for $-1 \leq m \leq n_I$.
\end{proof}

\section{Applications}

In this section let us introduce some applications of Theorem 1.1.
We study the structure of stretched $\m$-primary ideals with small reduction number.

\begin{rem}
Suppose that $I$ is stretched.
We notice that we have $n_I, r_I \geq 2$, and $G$ is Cohen-Macaulay if $r_I= 2$.
\end{rem}

We have the following proposition for the case where the reduction number is three.

\begin{prop}
Suppose that $I$ is stretched and assume that $r_I =3$.
Then we have $\Lambda \subseteq \{2\}$ and the following condition holds true.
\begin{itemize}
\item[$(1)$] Suppose $\Lambda=\emptyset$. Then
\begin{itemize}
\item[$(i)$] $n_I=3$, $\alpha_1=2$, $\alpha_2=1$,
\item[$(ii)$] $\e_1(I)=\e_0(I)-\ell_A(A/I)+3$, $\e_2(I)=4$ if $d \geq 2$, $\e_3(I)=1$, if $d \geq 3$, and $\e_i(I)=0$ for $4 \leq i \leq d$, and
\item[$(iii)$] $G$ is Cohen-Macaulay.
\end{itemize}
\item[$(2)$] Suppose $\Lambda=\{2\}$. Then
\begin{itemize}
\item[$(i)$] $n_I=2$, $\alpha_1=\alpha_2=1$, 
\item[$(ii)$] $\e_1(I)=\e_0(I)-\ell_A(A/I)+2$, $\e_2(I)=3$ if $d \geq 2$, $\e_3(I)=1$, if $d \geq 3$, and $\e_i(I)=0$ for $4 \leq i \leq d$, and
\item[$(iii)$] $\depth G=d-1$.
\end{itemize}
\end{itemize}
\end{prop}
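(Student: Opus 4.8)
The plan is to pin down the possible values of $n_I$ from the identity $|\Lambda| = r_I - n_I$ of Proposition 2.3$(1)$ and then to read off every assertion from the structural results already proved. First I would record that $n_I \geq 2$ because $I$ is stretched (Remark 5.1), while $|\Lambda| = r_I - n_I = 3 - n_I \geq 0$ forces $n_I \in \{2,3\}$. To obtain $\Lambda \subseteq \{2\}$ I would eliminate the remaining indices directly through the invariants $\beta_n$: the stretched hypothesis $Q \cap I^2 = QI$ gives $\beta_1 = \ell_A(Q \cap I^2/QI) = 0$, so $1 \notin \Lambda$, and for $n \geq r_I = 3$ the reduction equality $I^{n+1} = QI^n \subseteq QI^{n-1}$ yields $QI^{n-1} \cap I^{n+1} = I^{n+1} = QI^n$, whence $\beta_n = 0$ and $n \notin \Lambda$. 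The only surviving index is $n = 2$, giving $\Lambda \subseteq \{2\}$.

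The two cases are then separated entirely by $|\Lambda|$. In case $(1)$, $\Lambda = \emptyset$ means $|\Lambda| = 0$, so $n_I = 3 = r_I$; Lemma 2.2$(1)$ gives $\alpha_1 = n_I - 1 = 2$, and Lemma 2.2$(3)$ with $\beta_1 = \beta_2 = 0$ gives $\alpha_2 = n_I - 2 = 1$, which is $(i)$. Since $r_I = n_I$, Corollary 4.2 applies verbatim: it yields that $G$ is Cohen-Macaulay, which is $(iii)$, together with $\e_1(I) = \e_0(I) - \ell_A(A/I) + \binom{3}{2}$ and $\e_k(I) = \binom{4}{k+1}$ for $2 \leq k \leq d$. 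Substituting $\binom{4}{3} = 4$, $\binom{4}{4} = 1$, and $\binom{4}{k+1} = 0$ for $k \geq 4$ produces $(ii)$.

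In case $(2)$, $\Lambda = \{2\}$ means $|\Lambda| = 1$, so $n_I = 2$ and $r_I = n_I + 1$; Corollary 2.4 identifies $\Lambda = \{s\}$, forcing $s = 2$. Here $\alpha_1 = n_I - 1 = 1$, and Lemma 2.2$(3)$ with $\beta_1 = 0$, $\beta_2 = 1$ gives $\alpha_2 = n_I - 2 + 1 = 1$, establishing $(i)$. Now Theorem 1.1 applies with $n_I = 2$ and $s = 2$: part $(5)$ gives $\depth G = d-1$, which is $(iii)$, while parts $(1)$ and $(2)$ give $\e_1(I) = \e_0(I) - \ell_A(A/I) + \binom{3}{2} - 2 + 1$ and $\e_k(I) = \binom{4}{k+1} - \binom{2}{k}$ for $2 \leq k \leq d$. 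Evaluating the binomials yields $\e_2(I) = 4 - 1 = 3$, $\e_3(I) = 1 - 0 = 1$, and $\e_i(I) = 0$ for $i \geq 4$, which is $(ii)$.

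I expect no conceptual obstacle, since the two regimes $r_I = n_I$ and $r_I = n_I + 1$ are precisely the situations governed by Corollary 4.2 and Theorem 1.1 respectively; the argument is bookkeeping layered on top of those results. The one genuinely computational step, and hence the crux, is the verification that $\beta_n = 0$ both for $n = 1$ and for every $n \geq 3$, since this is what confines $\Lambda$ to $\{2\}$ and thereby determines $n_I$. Beyond that, the only care needed is to match the binomial coefficients against the small explicit values claimed in $(ii)$ and to respect the dimension thresholds $d \geq 2$ and $d \geq 3$ under which each formula for $\e_k(I)$ is asserted.
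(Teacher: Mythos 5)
Your proposal is correct and follows essentially the same route as the paper: the paper's own proof is just the one-line observation that $n_I = r_I - |\Lambda|$ (Proposition 2.3(1)) reduces case (1) to Corollary 4.2 ($r_I = n_I$) and case (2) to Theorem 1.1 ($r_I = n_I + 1$), which is exactly your case split. Your explicit verification that $\beta_1 = 0$ and $\beta_n = 0$ for $n \geq 3$ (hence $\Lambda \subseteq \{2\}$), and the binomial bookkeeping, merely fill in details the paper leaves implicit.
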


\begin{proof}
Since $n_I=r_I-|\Lambda|$, the assertions $(1)$ and $(2)$ follow by Corollary 4.2 and Theorem 1.1 respectively.
\end{proof}

The following theorem determine the structure of stretched $\m$-primary ideals with reduction number four.

\begin{thm}
Suppose that $I$ is stretched and assume that $r_I =4$.
Then we have $\Lambda \subseteq \{2,3\}$ and the following conditions hold true.
\begin{itemize}
\item[$(1)$] Suppose $\Lambda=\emptyset$. Then
\begin{itemize}
\item[$(i)$] $n_I=4$, $\alpha_1=3$, $\alpha_2=2$, $\alpha_3=1$,
\item[$(ii)$]  $\e_1(I)=\e_0(I)-\ell_A(A/I)+6$, $\e_2(I)=10$ if $d \geq 2$, $\e_3(I)=5$, if $d \geq 3$, $\e_4(I)=1$ if $d \geq 4$, and $\e_i(I)=0$ for $5 \leq i \leq d$, and
\item[$(iii)$] $G$ is Cohen-Macaulay.
\end{itemize}
\item[$(2)$] Suppose $\Lambda=\{2\}$. Then
\begin{itemize}
\item[$(i)$] $n_I=3$, $\alpha_1=\alpha_2=2$, $\alpha_3=1$, 
\item[$(ii)$] $\e_1(I)=\e_0(I)-\ell_A(A/I)+5$, $\e_2(I)=9$ if $d \geq 2$, $\e_3(I)=5$, if $d \geq 3$, $\e_4(I)=1$ if $d \geq 4$, and $\e_i(I)=0$ for $5 \leq i \leq d$, and
\item[$(iii)$] $\depth G=d-1$
\end{itemize}
\item[$(3)$] Suppose $\Lambda=\{3\}$. Then
\begin{itemize}
\item[$(i)$] $n_I=3$, $\alpha_1=2$, $\alpha_2=\alpha_3=1$,
\item[$(ii)$] $\e_1(I)=\e_0(I)-\ell_A(A/I)+4$, $\e_2(I)=7$ if $d \geq 2$, $\e_3(I)=4$, if $d \geq 3$, $\e_4(I)=1$ if $d \geq 4$, and $\e_i(I)=0$ for $5 \leq i \leq d$, and
\item[$(iii)$] $\depth G=d-1$.
\end{itemize}
\item[$(4)$] Suppose $\Lambda=\{2,3\}$. Then
\begin{itemize}
\item[$(i)$] $n_I=2$, $\alpha_1=\alpha_2=\alpha_3=1$, 
\item[$(ii)$] $\e_1(I)=\e_0(I)-\ell_A(A/I)+3$, $\e_2(I)=6$ if $d \geq 2$, $\e_3(I)=4$, if $d \geq 3$, $\e_4(I)=1$ if $d \geq 4$, and $\e_i(I)=0$ for $5 \leq i \leq d$, and
\item[$(iii)$] $\depth G=d-1$.
\end{itemize}
\end{itemize}
\end{thm}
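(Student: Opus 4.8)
The plan is to run the analysis on $\Lambda$, using the identity $n_I = r_I - |\Lambda|$ throughout. First I would establish $\Lambda \subseteq \{2,3\}$. The stretched condition $Q \cap I^2 = QI$ gives $\beta_1 = 0$, so $1 \notin \Lambda$; and for $n \geq r_I = 4$ one has $I^{n+1} = QI^n$, whence $QI^{n-1} \cap I^{n+1} = QI^n$ and $\beta_n = 0$, so no such $n$ lies in $\Lambda$. Thus $\Lambda \subseteq \{2,3\}$ and the four listed possibilities are exhaustive. In each case Proposition 2.3(1) gives $n_I = 4 - |\Lambda|$, and the values of $\alpha_1, \alpha_2, \alpha_3$ are then read off directly from $\alpha_n = n_I - n + \sum_{k=1}^n \beta_k$ (Lemma 2.2(3)).

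The cases $\Lambda = \emptyset, \{2\}, \{3\}$ reduce to results already established. If $\Lambda = \emptyset$ then $n_I = 4 = r_I$, so $G$ is Cohen--Macaulay and the assertions in (1)(ii)--(iii) follow from Corollary 4.2. If $|\Lambda| = 1$, i.e. $\Lambda = \{2\}$ or $\{3\}$, then $r_I = n_I + 1$, so Theorem 1.1 applies with $s = \min \Lambda$; a routine binomial simplification of the formulas in Theorem 1.1(1)(2)(4) produces the displayed values of $\e_k(I)$ and the asserted Hilbert series, while Theorem 1.1(5) gives $\depth G = d-1$.

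The only case not covered by Theorem 1.1 is $\Lambda = \{2,3\}$, where $n_I = 2$ and $r_I = n_I + 2$. Here I would first prove $I^3 \subseteq QI$. Indeed $\alpha_2 = \ell_A(I^3/QI^2) = 1$ and $\beta_2 = \ell_A(QI \cap I^3/QI^2) = 1$, so the simple module $I^3/QI^2$ coincides with its submodule $(QI \cap I^3)/QI^2$, forcing $QI \cap I^3 = I^3$. Since $QI = QI^{n_I-1}$, this is exactly the hypothesis $I^{n_I+1} \subseteq QI^{n_I-1}$ of [17, Proposition 3.1], so $\depth G \geq d-1$; and because $r_I \neq n_I$, Corollary 4.2 shows that $G$ is not Cohen--Macaulay, whence $\depth G = d-1$.

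It remains to compute the Hilbert coefficients when $\Lambda = \{2,3\}$, which I would do through Proposition 4.1. By Proposition 4.1(2) the total rank $\sum_{m} \ell_{T_{\mathcal{P}}}(K^{(m)}_{\mathcal{P}})$ is $r_I - n_I = 2$, and the cascade argument from the proof of Proposition 4.3 (which does not use the rank) gives $K^{(m)} = (0)$ for all $m \geq 1$, so only $K^{(-1)}$ and $K^{(0)}$ contribute. The key point --- and the main obstacle --- is to show, exactly as in Proposition 4.3 but now for a rank-two configuration split between $K^{(-1)}$ and $K^{(0)}$, that these are free $B$-modules; this is forced by the maximality of $\depth G$, equivalently by the equality case of [9, Theorem 4.7]. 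Granting freeness, their combined contribution to $\ell_A(A/I^{n+1})$ has the shape $\binom{n - a + d}{d} + \binom{n - b + d}{d}$ for two shifts $a, b$. The equality $\e_1(I) = \e_0(I) - \ell_A(A/I) + 3$ (again [9, Theorem 4.7]) forces $a + b = 5$, while $K^{(-1)}_0 = K^{(-1)}_1 = (0)$ (from $Q \cap I^2 = QI$) and $K^{(0)}_n = (0)$ for $n \leq 0$ force both shifts to be at least $\min \Lambda = 2$; since $a, b \geq 2$ and $a + b = 5$, necessarily $\{a,b\} = \{2,3\}$. Feeding $\binom{n-2+d}{d} + \binom{n-3+d}{d}$ into Proposition 4.1(1) and collecting binomial terms then yields $\e_2(I) = 6$, $\e_3(I) = 4$, $\e_4(I) = 1$, and $\e_i(I) = 0$ for $i \geq 5$, together with the stated Hilbert series. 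The delicate part is thus the rank-two freeness step: the rank-one argument of Proposition 4.3 used that $B$ is a unique factorization domain to write $K^{(m)} \cong \a(h)$ and then pushed $\e_1$ to its extreme to force $\a = B$, and I would need to rerun that extremality argument when the two rank-one pieces are distributed between $K^{(-1)}$ and $K^{(0)}$, using the depth bound for freeness and the bottom-degree vanishings to separate the two shifts.
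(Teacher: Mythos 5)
Your reduction to $\Lambda\subseteq\{2,3\}$, your treatment of the cases $\Lambda=\emptyset,\{2\},\{3\}$ via Corollary 4.2 and Theorem 1.1, and assertion (i) in every case agree with the paper and are correct. Your depth argument for $\Lambda=\{2,3\}$ is also correct but takes a different route: you derive $I^3=QI\cap I^3\subseteq QI$ from $\alpha_2=\beta_2=1$ and invoke [17, Proposition 3.1], whereas the paper observes $\alpha_1=n_I-1=1$ and invokes the Sally-conjecture theorems of Rossi--Valla [15, Theorem 2.1] and Wang [23, Theorem 3.1]; either way $\depth G\geq d-1$, and equality follows from $r_I\neq n_I$ via Corollary 4.2.

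The genuine gap is your computation of $\e_2,\e_3,\e_4$ and the Hilbert series when $\Lambda=\{2,3\}$. Two specific steps fail. First, the cascade argument does \emph{not} give $K^{(m)}=(0)$ for all $m\geq 1$: it only propagates nonvanishing for $1\leq m\leq r_I-2$, so while $K^{(1)}\neq(0)$ and $K^{(2)}\neq(0)$ are excluded by the rank count of Proposition 4.1(2) together with Corollary 3.10, the configuration ``$K^{(3)}\neq(0)$ of rank one plus exactly one of $K^{(-1)},K^{(0)}$ of rank one'' is compatible with total rank two, with Corollary 3.10 (a single rank-one module can be nonzero in both degrees forced by $\beta_2=\beta_3=1$), and even with the $\e_1$-extremality. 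Second, even granting $K^{(m)}=(0)$ for $m\geq 1$, your listed constraints do not force freeness: the equality case of [9, Theorem 4.7] pins down only the \emph{sum} of the two shifts, the bottom-degree vanishings bound where each module starts, and Corollary 3.10 forces nonvanishing in degrees $2$ and $3$ --- but, for instance, $K^{(-1)}\cong\a$ with $\a$ a height-two ideal generated in degrees $\geq 2$ together with $K^{(0)}\cong B(-4)$ satisfies every one of these constraints while producing a different $\e_2$. In the rank-one case of Proposition 4.3, freeness came out because the pinned shift placed the forced nonvanishing of Corollary 3.10 exactly at the degree-zero component of $\a$; with the rank split between two modules, the shift sum alone cannot reproduce that step, so a genuinely new argument would be needed. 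The paper avoids all of this: once $\depth G\geq d-1$ is known, Huckaba's formula [7, Corollary 2.11] gives $\e_i(I)=\sum_{n\geq i}\binom{n-1}{i-1}v_n$ with $v_n=\ell_A(I^n/QI^{n-1})$, and plugging in $v_1=\e_0(I)-\ell_A(A/I)$, $v_2=v_3=v_4=1$ yields all of (4)(ii) and the Hilbert series immediately. Replacing your Sally-module analysis by that citation closes the gap.
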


\begin{proof}
The assertions $(1)$, $(2)$, and $(3)$ follow by Corollary 4.2 and Theorem 1.1.
Suppose that $\Lambda=\{2,3\}$ then we have $\alpha_1=n_I-1=r_I-|\Lambda|-1=1$.
Thanks to [15, Theorem 2.1], [23, Theorem 3.1], and [7, Corollary 2.11], we obtain the assertion $(4)$.
\end{proof}

We can get the following corollary by Proposition 5.2 and Theorem 5.3.

\begin{cor}
Suppose that $I$ is stretched and assume that $I^5=QI^4$ $($i.e. $r_I \leq 4)$, then $G$ is almost Cohen-Macaulay.
\end{cor}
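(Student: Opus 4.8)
The plan is to reduce the statement to a finite case analysis on the reduction number $r_I$. Since $I$ is stretched, Remark 5.1 gives $r_I \geq 2$, while the hypothesis $I^5 = QI^4$ is exactly the assertion that $r_I \leq 4$. Hence $r_I \in \{2,3,4\}$, and it suffices to verify $\depth G \geq d-1$ in each of these three cases; all the substantive work has already been carried out in the preceding results, so this reduces to bookkeeping.

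First, if $r_I = 2$ then $G$ is Cohen-Macaulay by Remark 5.1, and so $\depth G = d \geq d-1$. Next, if $r_I = 3$, I would invoke Proposition 5.2: there $\Lambda \subseteq \{2\}$, and either $\Lambda = \emptyset$, in which case $G$ is Cohen-Macaulay, or $\Lambda = \{2\}$, in which case $\depth G = d-1$; in both situations $\depth G \geq d-1$. Finally, if $r_I = 4$, I would appeal to Theorem 5.3, which enumerates the four possibilities $\Lambda \in \{\emptyset, \{2\}, \{3\}, \{2,3\}\}$ and records in each that $G$ is either Cohen-Macaulay or satisfies $\depth G = d-1$; thus $\depth G \geq d-1$ throughout.

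Assembling the three cases yields $\depth G \geq d-1$ unconditionally, which is the claim. I do not expect any genuine obstacle here, since every depth computation was front-loaded into Proposition 5.2 and Theorem 5.3 (which themselves rest on Theorem 1.1 and Corollary 4.2). The only point demanding a moment's care is that the listed cases are exhaustive, i.e. that $\Lambda$ is constrained exactly as stated for each value of $r_I$; this is guaranteed by the identity $|\Lambda| = r_I - n_I$ of Proposition 2.3$(1)$ together with $n_I \geq 2$, which ensures that no value of $r_I \leq 4$ escapes the enumeration.
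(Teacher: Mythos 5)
Your proof is correct and follows exactly the paper's own route: the paper derives this corollary directly from Remark 5.1 (the case $r_I=2$), Proposition 5.2 (the case $r_I=3$), and Theorem 5.3 (the case $r_I=4$), which is precisely your case analysis. Nothing further is needed.
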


We set $\tau(I)=\ell_A([Q:I] \cap I/Q)$ and call it the Cohen-Macaulay type of the ideal $I$.

The following lemma is given by Rossi and Valla [17].

\begin{lem}$([17, \mathrm{Theorem} \ 2.7])$
Suppose that $I$ is a stretched and assume that $\tau(I)<\ell_A(I/I^2)-(d+1)\ell_A(A/I)+1$.
Then, $\alpha_2=\alpha_1-1$ and $\beta_2=0$ hold true.
\end{lem}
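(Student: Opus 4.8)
The plan is to prove the contrapositive, after first reducing the two conclusions to one. From the computation in the proof of Lemma 2.2$(2)$ one has $\beta_2=1+\alpha_2-\alpha_1$ whenever $r_I\ge 3$, so the equalities $\alpha_2=\alpha_1-1$ and $\beta_2=0$ are equivalent, and $\beta_2\in\{0,1\}$ by Lemma 2.2$(2)$. If $r_I=2$ then $G$ is Cohen-Macaulay (Remark 5.1), hence $r_I=n_I$ by Corollary 4.2, forcing $\alpha_1=1$ and $\alpha_2=0$; as $I^3=QI^2\subseteq QI$ also gives $\beta_2=0$, both conclusions hold trivially. Thus I may assume $r_I\ge 3$, and it suffices to rule out $\beta_2=1$. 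So I assume $\beta_2=1$ and aim to derive $\tau(I)\ge\ell_A(I/I^2)-(d+1)\ell_A(A/I)+1$, contradicting the hypothesis.

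First I would record the numerical translation. Passing to the Artinian reduction $\overline A=A/Q$ with $\overline I=I/Q$, one has $\tau(I)=\ell_A\big((0\colon_{\overline A}\overline I)\cap\overline I\big)$; and since $a_1,\dots,a_d$ is a regular sequence extendable to a minimal generating set of $I$, the stretched condition $Q\cap I^2=QI$ yields $Q/QI\cong(A/I)^{\,d}$, so that
$$\ell_A(I/I^2)-(d+1)\ell_A(A/I)+1=\ell_A\!\big(I/(I^2+Q)\big)-\ell_A(A/I)+1=:w-\ell_A(A/I)+1,$$
where $w:=\ell_A\!\big(I/(I^2+Q)\big)=\ell_A(\overline I/\overline I^{\,2})$. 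Hence the goal becomes $\tau(I)\ge w-\ell_A(A/I)+1$.

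The core is a socle computation organized by the $\overline I$-adic filtration together with the multiplication pairing. Because $\overline I^{\,k}=(\overline{x^{k-1}y})$ is cyclic for $k\ge 2$ by Lemma 2.1$(1)$ and $\overline{x^{k-1}y}\cdot\overline x=\overline{x^{k}y}\ne 0$ for $k<n_I$, one checks $(0\colon_{\overline A}\overline I)\cap\overline I^{\,2}=\overline I^{\,n_I}$ has length $1$, so $\tau(I)=1+\dim_{A/\m}S_1$, where $S_1\subseteq V:=\overline I/\overline I^{\,2}$ is the image of $[Q\colon I]\cap I$ (the degree-one socle classes). Using $Q\cap I^2=QI$ one sees $[Q\colon I]\cap I=\{z\in I : zI\subseteq QI\}$, whence $S_1$ lies in the radical of the leading multiplication form $\beta\colon V\times V\to I^2/(QI+I^3)\cong A/\m$, $\beta(\overline z,\overline c)=\overline{zc}$. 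I would then establish, \emph{under} $\beta_2=1$, the two statements: (A) every radical vector of $\beta$ lifts to a genuine socle element, so $S_1=\ker\beta$; and (B) $\rank\beta\le\ell_A(A/I)$, equivalently $\dim\ker\beta\ge w-\ell_A(A/I)$. Combined, these give $\tau(I)=1+\dim\ker\beta\ge w-\ell_A(A/I)+1$, the contradiction sought.

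The main obstacle is precisely (A) and (B), since the invariant $\beta_2=\ell_A(QI\cap I^3/QI^2)$ is invisible on $\overline A=A/Q$, whereas $\tau(I)$, $w$ and $\beta$ are intrinsic to $\overline A$; so the implication must genuinely use the lift to $A$. Concretely, I would start from an element $c\in QI\cap I^3\setminus QI^2$, write $c=\sum_i a_ig_i$ with $g_i\in I$, and read the resulting relation through the modules $K^{(-1)},K^{(0)}$ of Section 3 (recall from Corollary 3.10 that $\beta_2\ne 0$ forces $K^{(-1)}_2\ne(0)$ or $K^{(0)}_1\ne(0)$). The degeneracy this records should simultaneously suppress the higher-order obstruction to lifting radical vectors, yielding (A), and bound $\rank\beta$ by $\ell_A(A/I)$, yielding (B). Turning this heuristic link between the $A$-level vanishing in $K^{(-1)}_2,K^{(0)}_1$ and the $\overline A$-level rank of $\beta$ into a quantitative inequality is the crux of the proof, and is where I expect the real work to lie.
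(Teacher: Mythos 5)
Your preparatory reductions are all correct and cleanly done: the identity $\beta_2=1+\alpha_2-\alpha_1$ (valid once $r_I\ge 3$, from the proof of Lemma 2.2$(2)$) does make the two conclusions equivalent; the case $r_I=2$ is indeed trivial; the translation $\ell_A(I/I^2)-(d+1)\ell_A(A/I)+1=\ell_A(I/(I^2+Q))-\ell_A(A/I)+1$ is right (it uses $Q\cap I^2=QI$ and $Q/QI\cong (A/I)^d$); and the computations $\tau(I)=1+\ell_A(S_1)$ and $S_1\subseteq\ker\beta$ are valid. But the argument stops exactly where the theorem begins. Your claims (A) and (B) are not auxiliary lemmas to which the problem has been reduced; together they carry the entire content of the statement (indeed, granting the lemma, $\beta_2\neq 0$ forces $\tau(I)\ge \ell_A(I/(I^2+Q))-\ell_A(A/I)+1$, from which (A) and (B) follow back, so your reduction is essentially a reformulation). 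You explicitly concede that deriving (A) and (B) from $\beta_2=1$ is ``where I expect the real work to lie''; that work is absent, so this is a genuine gap rather than a proof.

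Moreover, the one concrete mechanism you point to runs in the wrong direction. Proposition 3.9 and Corollary 3.10 extract structural consequences from the \emph{vanishing} of the modules $K^{(m)}$; what $\beta_2\neq 0$ gives you is only the \emph{non-vanishing} of $K^{(-1)}_2$ or $K^{(0)}_1$, i.e.\ the existence of one nonzero relation, and nothing in Section 3 converts such non-vanishing into either the lifting statement (A) or the rank bound (B). Note also that the paper itself offers no proof to compare against: Lemma 5.5 is quoted from Rossi--Valla [17, Theorem 2.7], where it is established by direct ideal-theoretic and socle analysis built on the elements $x,y$ of Lemma 2.1, not by Sally-module techniques. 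To complete your route you would have to supply that analysis yourself; for instance, starting from the modular-law decomposition $QI\cap I^3=QI^2+\bigl(QI\cap Ax^2y\bigr)$, an element witnessing $\beta_2\neq 0$ is of the form $\lambda x^2y\in QI\setminus QI^2$, and one must show that such an element generates enough socle classes in $A/Q$ to force $\tau(I)\ge \ell_A(I/(I^2+Q))-\ell_A(A/I)+1$. No step of this kind appears in the proposal.
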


The following corollaries hold true by Proposition 5.2, Theorem 5.3, and Lemma 5.5.

\begin{cor}
Suppose that $I$ is stretched and assume that $\tau(I) < \ell_A(I/I^2)-(d+1)\ell_A(A/I)+1$.
If $r_{I} =3$, then the following conditions hold true.
\begin{itemize}
\item[$(1)$] $n_I=3$, $\alpha_1=2$, $\alpha_2=1$,
\item[$(2)$] $\e_1(I)=\e_0(I)-\ell_A(A/I)+3$, $\e_2(I)=4$ if $d \geq 2$, $\e_3(I)=1$, if $d \geq 3$, and $\e_i(I)=0$ for $4 \leq i \leq d$.
\item[$(3)$] $G$ is Cohen-Macaulay.
\end{itemize}
\end{cor}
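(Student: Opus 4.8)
The plan is to show that the type hypothesis rules out the only ``bad'' configuration left open by Proposition 5.2, so that we are forced into its case $(1)$ and can simply read off all the conclusions from there. The whole argument is a short deduction from the three cited results, so I would present it as such rather than re-deriving anything.

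First I would invoke Lemma 5.5. The standing assumption $\tau(I) < \ell_A(I/I^2)-(d+1)\ell_A(A/I)+1$ is precisely its hypothesis, so it yields $\alpha_2=\alpha_1-1$ and, more to the point, $\beta_2=0$. Next I would translate $\beta_2=0$ into a statement about $\Lambda$. By definition $\beta_2=\ell_A(QI\cap I^3/QI^2)$, and an index $n$ lies in $\Lambda=\{n\geq 1 \mid QI^{n-1}\cap I^{n+1}/QI^n\neq(0)\}$ exactly when $\beta_n\neq 0$; hence $\beta_2=0$ says simply that $2\notin\Lambda$.

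Now I would bring in Proposition 5.2: since $r_I=3$, that proposition already guarantees $\Lambda\subseteq\{2\}$. Combining this with $2\notin\Lambda$ forces $\Lambda=\emptyset$, which places us squarely in part $(1)$ of Proposition 5.2. From $n_I=r_I-|\Lambda|=3$ together with the explicit data recorded there, I would then read off $\alpha_1=2$, $\alpha_2=1$, the values $\e_1(I)=\e_0(I)-\ell_A(A/I)+3$, $\e_2(I)=4$, $\e_3(I)=1$, $\e_i(I)=0$ for $4\leq i\leq d$, and the Cohen--Macaulayness of $G$. This settles $(1)$, $(2)$, and $(3)$ simultaneously.

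There is no serious obstacle here; the entire content is carried by Lemma 5.5 (which supplies $\beta_2=0$) and Proposition 5.2 (which supplies $\Lambda\subseteq\{2\}$ and the case analysis). If anything, the only point worth verifying carefully is the equivalence $\beta_2=0 \Leftrightarrow 2\notin\Lambda$ and the internal consistency of the numerics: the value $\alpha_2=\alpha_1-1$ returned by Lemma 5.5 matches $\alpha_1=2,\ \alpha_2=1$ from case $(1)$, which is a reassuring sanity check that the hypothesis indeed selects the Cohen--Macaulay branch rather than the branch $\Lambda=\{2\}$ (where one would only get $\depth G=d-1$).
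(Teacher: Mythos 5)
Your proposal is correct and is precisely the argument the paper intends: its one-line proof of this corollary cites Lemma 5.5 and Proposition 5.2, and your write-up just makes explicit the combination (type hypothesis $\Rightarrow \beta_2=0 \Rightarrow 2\notin\Lambda$, while $r_I=3$ gives $\Lambda\subseteq\{2\}$, forcing $\Lambda=\emptyset$ and case $(1)$ of Proposition 5.2). The equivalence $n\in\Lambda\Leftrightarrow\beta_n\neq 0$ that you flag as the point to verify is immediate from the definitions of $\beta_n$ and $\Lambda$, so there is no gap.
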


\begin{cor}
Suppose that $I$ is stretched and assume that $\tau(I) < \ell_A(I/I^2)-(d+1)\ell_A(A/I)+1$.
If $r_{I} =4$, then we have $\Lambda \subseteq \{3\}$ and the following condition holds true.
\begin{itemize}
\item[$(1)$] Suppose $\Lambda=\emptyset$. Then
\begin{itemize}
\item[$(i)$] $n_I=4$, $\alpha_1=3$, $\alpha_2=2$, and $\alpha_3=1$,
\item[$(ii)$]  $\e_1(I)=\e_0(I)-\ell_A(A/I)+6$, $\e_2(I)=10$ if $d \geq 2$, $\e_3(I)=5$, if $d \geq 3$, $\e_4(I)=1$ if $d \geq 4$, and $\e_i(I)=0$ for $5 \leq i \leq d$.
\item[$(iii)$] $G$ is Cohen-Macaulay.
\end{itemize}
\item[$(2)$] Suppose $\Lambda=\{3\}$. Then
\begin{itemize}
\item[$(i)$] $n_I=3$, $\alpha_1=2$, $\alpha_2=\alpha_3=1$,
\item[$(ii)$]  $\e_1(I)=\e_0(I)-\ell_A(A/I)+4$, $\e_2(I)=7$ if $d \geq 2$, $\e_3(I)=4$, if $d \geq 3$, $\e_4(I)=1$ if $d \geq 4$, and $\e_i(I)=0$ for $5 \leq i \leq d$.
\item[$(iii)$] $\depth G=d-1$.
\end{itemize}
\end{itemize}
\end{cor}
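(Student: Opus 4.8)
The plan is to obtain this as a direct consequence of Theorem 5.3, using the extra hypothesis on $\tau(I)$ only to rule out the two cases in which $2 \in \Lambda$. First I would apply Lemma 5.5: under the assumption $\tau(I) < \ell_A(I/I^2) - (d+1)\ell_A(A/I) + 1$, that lemma gives $\beta_2 = 0$ (equivalently $\alpha_2 = \alpha_1 - 1$). Recalling that
$$\Lambda = \{n \geq 1 \mid QI^{n-1} \cap I^{n+1}/QI^n \neq (0)\} \quad\text{and}\quad \beta_n = \ell_A(QI^{n-1} \cap I^{n+1}/QI^n),$$
the vanishing $\beta_2 = 0$ is precisely the statement that $2 \notin \Lambda$.

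Next I would invoke Theorem 5.3, which for $r_I = 4$ already establishes $\Lambda \subseteq \{2, 3\}$. Intersecting this with $2 \notin \Lambda$ forces $\Lambda \subseteq \{3\}$, so only the possibilities $\Lambda = \emptyset$ and $\Lambda = \{3\}$ survive; these are exactly cases $(1)$ and $(3)$ of Theorem 5.3. Reading off those two cases then yields all of the asserted data: the values $n_I = 4$ (resp. $n_I = 3$), the numbers $\alpha_1, \alpha_2, \alpha_3$, the Hilbert coefficients $\e_k(I)$ for $1 \leq k \leq d$, and the depth of $G$ (namely $G$ Cohen-Macaulay when $\Lambda = \emptyset$, and $\depth G = d-1$ when $\Lambda = \{3\}$). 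These match items $(1)$ and $(2)$ of the corollary verbatim.

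There is no genuine analytic difficulty here: all of the quantitative content already resides in Proposition 5.2, Theorem 5.3, and Lemma 5.5, and the corollary is merely an assembly of these results. The one point that must be handled with care is the equivalence $\beta_2 = 0 \iff 2 \notin \Lambda$, which rests on the definitions of $\Lambda$ and $\beta_n$ recalled above. As a cross-check, one may instead use the equivalent conclusion $\alpha_2 = \alpha_1 - 1$ of Lemma 5.5, which independently excludes the cases $\Lambda = \{2\}$ and $\Lambda = \{2,3\}$ of Theorem 5.3, since both of those satisfy $\alpha_2 = \alpha_1$. Either route confines us to the two listed cases, and no further computation is needed.
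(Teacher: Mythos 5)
Your proposal is correct and follows essentially the same route as the paper, which derives this corollary directly from Lemma 5.5 (giving $\beta_2=0$, hence $2\notin\Lambda$) combined with the case analysis of Theorem 5.3; your write-up simply makes explicit the identification $\beta_2=0 \iff 2\notin\Lambda$ that the paper leaves implicit. No gaps.
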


Let $\tau(A)=\tau(\m)=\ell_A([Q:\m]/Q)$ denote the Cohen-Macaulay type of  $A$.
Let $k[[u]]$ denote the formal power series ring with one indeterminate $u$ over a field $k$.

The following example satisfies the conditions of Corollary 5.6.

\begin{ex}
Let $A=k[[u^{7},u^{15},u^{18},u^{26},u^{27}]]$ and $Q=(u^7)$.
Then $Q$ is a reduction of the maximal ideal $\m$ of $A$, $\m^2=Q\m+(u^{30})$, $\tau(A)=2$, $\mu(\m)=5$, and $r_{\m}=3$.
Therefore, $\ell_A(A/\m^{n+1})=7(n+1)-9$ for all $n \geq 2$ and $\rmG(\m)$ is Cohen-Macaulay.
\end{ex}

We can also construct examples which satisfy the condition of $(1)$ and $(2)$ of Corollary 5.7 respectively.

\begin{ex}
The following assertions hold true.
\begin{itemize}
\item[$(1)$] Let $A=k[[u^{8},u^{17},u^{29},u^{38},u^{39}]]$ and $Q=(u^8)$. 
Then $Q$ is a reduction of the maximal ideal $\m$ of $A$, $\m^2=Q\m+(t^{34})$, $\tau(A)=2$, $\mu(\m)=5$, and $r_{\m}=n_{\m}=4$.
Therefore, $\ell_A(A/\m^{n+1})=8(n+1)-13$ for all $n \geq 3$ and $\rmG(\m)$ is Cohen-Macaulay.

\item[$(2)$] Let $A=k[[u^{8},u^{17},u^{21},u^{30},u^{39},u^{52}]]$ and $Q=(u^8)$.
Then $Q$ is a reduction of the maximal ideal $\m$ of $A$, $\m^2=Q\m+(u^{34})$, $\tau(A)=3$, $\mu(\m)=6$, $r_{\m}=4$, and $n_{\m}=3$.
Therefore, $\ell_A(A/\m^{n+1})=8(n+1)-11$ for all $n \geq 3$ and $\rmG(\m)$ is not Cohen-Macaulay.
\end{itemize}
\end{ex}

Let us now introduce the following theorem.

\begin{thm}
Suppose that $I$ is stretched and $\tau(I) < \ell_A(I/I^2)-(d+1)\ell_A(A/I)+1$.
If $r_{I} =5$, then we have $\Lambda \subseteq \{3,4\}$ and the following conditions hold true.
\begin{itemize}
\item[$(1)$] Suppose $\Lambda=\emptyset$. Then
\begin{itemize}
\item[$(i)$] $n_I=5$ and $\alpha_1=4$, $\alpha_2=3$, $\alpha_3=2$, and $\alpha_4=1$,
\item[$(ii)$] $\e_1(\m)=\e_0(I)-\ell_A(A/I)+10$, $\e_2(I)=20$ if $d \geq 2$, $\e_3(I)=15$, if $d \geq 3$, $\e_4(I)=6$ if $d \geq 4$, $\e_5(I)=1$ if $d \geq 5$, and $\e_i(I)=0$ for $6 \leq i \leq d$.
\item[$(iii)$] $G$ is Cohen-Macaulay.
\end{itemize}
\item[$(2)$] Suppose $\Lambda=\{3\}$. Then 
\begin{itemize}
\item[$(i)$] $n_I=4$ and $\alpha_1=3$, $\alpha_2=\alpha_3=2$, $\alpha_4=1$, 
\item[$(ii)$] $\e_1(I)=\e_0(I)-\ell_A(A/I)+8$, $\e_2(I)=17$ if $d \geq 2$, $\e_3(I)=14$, if $d \geq 3$, $\e_4(I)=6$ if $d \geq 4$, $\e_5(I)=1$ if $d \geq 5$, and $\e_i(I)=0$ for $6 \leq i \leq d$.
\item[$(iii)$] $\depth G=d-1$.
\end{itemize}
\item[$(3)$] Suppose $\Lambda=\{4\}$. Then
\begin{itemize}
\item[$(i)$] $n_I=4$ and $\alpha_1=3$, $\alpha_2=2$, $\alpha_3=\alpha_4=1$,
\item[$(ii)$] $\e_1(I)=\e_0(I)-\ell_A(A/I)+7$, $\e_2(I)=14$ if $d \geq 2$, $\e_3(I)=11$, if $d \geq 3$, $\e_4(I)=5$ if $d \geq 4$, $\e_5(I)=1$ if $d \geq 5$, and $\e_i(I)=0$ for $6 \leq i \leq d$,
\item[$(iii)$] $\depth G=d-1$.
\end{itemize}

\item[$(4)$] Suppose $\Lambda=\{3,4\}$. Then
\begin{itemize}
\item[$(i)$] $n_I=3$ and $\alpha_1=2$, $\alpha_2=\alpha_3=\alpha_4=1$, 
\item[$(ii)$] $\e_1(I)=\e_0(I)-\ell_A(A/I)+5$, $\e_2(I)=11$ if $d \geq 2$, $\e_3(I)=10$, if $d \geq 3$, $\e_4(I)=5$ if $d \geq 4$, $\e_5(I)=1$ if $d \geq 5$, and $\e_i(I)=0$ for $6 \leq i \leq d$.
\item[$(iii)$] $\depth G=d-1$.
\end{itemize}
\end{itemize}
\end{thm}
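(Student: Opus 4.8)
The plan is to reduce the whole statement to the determination of the set $\Lambda$ and then invoke the results already in hand. First I would pin down $\Lambda \subseteq \{3,4\}$. Since $I$ is stretched, the condition $Q \cap I^2 = QI$ forces $\beta_1 = 0$, so $1 \notin \Lambda$; the standing hypothesis on $\tau(I)$ together with Lemma 5.5 gives $\beta_2 = 0$, so $2 \notin \Lambda$; and $I^{n+1} = QI^n$ for $n \geq r_I = 5$ gives $\beta_n = 0$ for $n \geq 5$, so $\Lambda \subseteq \{1,2,3,4\}$. Combining these yields $\Lambda \subseteq \{3,4\}$, and since $n_I = r_I - |\Lambda| = 5 - |\Lambda|$ by Proposition 2.3(1), the four choices of $\Lambda$ produce exactly the four cases. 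In every case the values $\alpha_1,\dots,\alpha_4$ in part (i) are immediate from Lemma 2.2(3), because $\alpha_n = n_I - n + \sum_{k=1}^{n}\beta_k$ and the $\beta_k$ are read off from $\Lambda$.

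For Cases (1)--(3), where $|\Lambda| \le 1$, one has $r_I = n_I$ or $r_I = n_I + 1$, so the conclusions follow by direct substitution. When $\Lambda = \emptyset$ then $n_I = 5 = r_I$, and Corollary 4.2 gives the Hilbert coefficients together with the Cohen--Macaulayness of $G$. When $\Lambda = \{3\}$ or $\Lambda = \{4\}$ then $n_I = 4$, $r_I = n_I + 1$, and $s = \min\Lambda$ equals $3$ or $4$ respectively; Theorem 1.1 then applies verbatim and yields the listed $\e_k(I)$ and $\depth G = d-1$.

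The substantive case is (4), $\Lambda = \{3,4\}$, where $n_I = 3$, $\alpha_1 = 2$, and $r_I = n_I + 2$, so that Theorem 1.1 is unavailable. Here I would work directly with the exact sequences $(\dagger_m)$ and the machinery underlying Proposition 4.3. By Proposition 4.1(2) the total rank is $\sum_{m=-1}^{4}\ell_{T_\mathcal{P}}(K^{(m)}_\mathcal{P}) = r_I - n_I = 2$, so the modules $K^{(m)}$ carry exactly two units of rank, each nonzero $K^{(m)}$ with $m \ge 1$ being a rank-one submodule of the free $B$-module $D^{(m)}/ID^{(m)}(-m)$. The propagation step from the Claim in the proof of Proposition 4.3 (nonvanishing of $K^{(m)}$, $m \ge 1$, forces $K^{(m')} \ne 0$ for all $m \le m' \le r_I - 1$), combined with the rank bound $2$, severely restricts the configuration, while Corollary 3.10 converts $3,4 \in \Lambda$ (i.e. $\beta_3,\beta_4 \ne 0$) into the nonvanishing of specific graded pieces $K^{(m)}_{2-m}$ and $K^{(m)}_{3-m}$, bounding below the degrees in which the generators of the $K^{(m)}$ can sit.

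Finally I would run the same two-sided estimate as in Proposition 4.3. The degree lower bounds just described give $\e_1(I) \ge \e_0(I) - \ell_A(A/I) + 5$ (here $\sum_{n \ge 1}\alpha_n = \binom{5}{2} - (3+4) + 2 = 5$ by Proposition 2.3(2)), while [9, Theorem 4.7] supplies the reverse inequality $\e_1(I) \le \e_0(I) - \ell_A(A/I) + \sum_{n\ge 1}\alpha_n = \e_0(I) - \ell_A(A/I) + 5$ together with the criterion that equality is equivalent to $\depth G \ge d-1$. Equality therefore holds, giving the stated $\e_1(I)$ and $\depth G = d-1$ (not $d$, since $r_I \ne n_I$); moreover equality pins the generators of the $K^{(m)}$ into their extremal degrees, forcing them to be free $B$-modules whose total contribution $\sum_{m}\sum_{i=0}^{n}\ell_A(K^{(m)}_{i-m-1})$ equals $\binom{n-3+d}{d} + \binom{n-4+d}{d}$, and Proposition 4.1(1) then yields the remaining $\e_k(I)$ and the Hilbert series. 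The main obstacle is precisely this determination of the $K^{(m)}$: with two units of rank possibly spread across $K^{(-1)}$ and $K^{(0)}$, the degree-shift bookkeeping needed to force the lower bound on $\e_1(I)$ is genuinely more delicate than the single-unit analysis of Proposition 4.3.
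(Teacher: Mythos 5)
Your determination of $\Lambda\subseteq\{3,4\}$ (via $\beta_1=0$ from stretchedness, $\beta_2=0$ from Lemma 5.5, $\beta_n=0$ for $n\geq r_I$) and your treatment of cases (1)--(3) coincide with the paper's proof: Corollary 4.2 when $\Lambda=\emptyset$, and Theorem 1.1 with $s=\min\Lambda$ when $|\Lambda|=1$. The divergence is case (4), $\Lambda=\{3,4\}$, and here the paper does not attempt any rank-two Sally-module analysis: having computed $\alpha_1=n_I-1=2$ and $\alpha_2=\alpha_1-1=1$ by Lemma 5.5, and observed $Q\cap I^3=QI\cap I^3=QI^2$ (since $\beta_1=\beta_2=0$), it simply invokes the known results on ideals with $\ell_A(I^2/QI)=2$, namely [8, Theorem 2.4] (see also [5, Theorem 3.10]), which deliver $\depth G=d-1$ and the Hilbert function in one stroke.

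Your proposed internal argument for case (4) has a genuine gap exactly at the step you yourself flag as ``the main obstacle'': the lower bound $\e_1(I)\geq\e_0(I)-\ell_A(A/I)+5$ is asserted, not proved, and it does not follow from the tools you cite. Writing $\e_1(I)=\e_0(I)-\ell_A(A/I)+\binom{5}{2}+2-(t_1+t_2)$, where $t_1,t_2$ are the effective initial degrees of the two units of rank carried by the $K^{(m)}$, the desired bound amounts to $t_1+t_2\leq 7$, i.e.\ to an \emph{upper} bound on both initial degrees. But Corollary 3.10, applied to $\beta_3\neq 0$ and $\beta_4\neq 0$, only produces \emph{some} $K^{(m)}$ with $m\in\{-1,0\}$ nonzero in effective degree $\leq 3$ and \emph{some} $K^{(m')}$ nonzero in effective degree $\leq 4$ --- and both conditions can be witnessed by the same module, since a nonzero torsion-free graded submodule in degree $j$ is nonzero in every degree $\geq j$. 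This constrains at most one of the two rank units. Concretely, your propagation-plus-rank-count argument does not exclude the configuration where $K^{(0)}$ has rank one starting in effective degree $3$ while $K^{(-1)}$ has rank one starting in arbitrarily high degree, nor the configuration $K^{(0)}\neq 0$ and $K^{(4)}\neq 0$ (propagation only runs forward from $1\leq m\leq r_I-2$, so a nonzero $K^{(r_I-1)}$ with $K^{(r_I-2)}=0$ is untouched; note $K^{(4)}$ necessarily starts in effective degree $\geq 5$). In any such configuration one gets $t_1+t_2>7$, hence $\e_1(I)<\e_0(I)-\ell_A(A/I)+5$, which is perfectly consistent with the Huckaba--Marley upper bound, so no contradiction arises and equality is never forced. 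Ruling out these configurations is precisely the content of the external theorems the paper quotes; to stay inside the Sally-module machinery you would need an additional ingredient (for instance, using $\beta_1=\beta_2=0$ to control the low-degree components of $K^{(-1)}$ and $K^{(0)}$), which your sketch does not supply. The final bookkeeping you describe --- two free modules contributing $\binom{n-3+d}{d}+\binom{n-4+d}{d}$, then Proposition 4.1(1) --- would indeed reproduce the stated coefficients, but only after this missing determination is carried out.
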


\begin{proof}
The assertions $(1)$, $(2)$, and $(3)$ follow by Corollary 4.2 and Theorem 1.1.
Suppose that $\Lambda=\{3,4\}$ then we have $\alpha_1=n_I-1=r_I-|\Lambda|-1=2$, and $\alpha_2=\alpha_1-1=1$ by Lemma 5.5.
We notice that $Q \cap I^3=QI \cap I^3=QI^2$ hold true because $\beta_1=\beta_2=0$.
Then the assertion $(4)$ is obtained by [8, Theorem 2.4] $($see [5, Theorem 3.10] also$)$.
\end{proof}

\begin{cor}
Suppose that $I$ is stretched and assume that $\tau(I) < \ell_A(I/I^2)-(d+1)\ell_A(A/I)+1$.
Then $G$ is almost Cohen-Macaulay, if $I^6=QI^5$ $($i.e. $r_I \leq 5)$ holds true.
\end{cor}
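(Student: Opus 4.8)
The plan is to reduce the statement to a finite case analysis on the reduction number $r_I$ and then assemble the structure results already established. Since $I$ is stretched, Remark 5.1 gives $r_I \geq 2$, so the hypothesis $I^6 = QI^5$, i.e. $r_I \leq 5$, forces $r_I \in \{2,3,4,5\}$. In each case the goal is merely to confirm $\depth G \geq d-1$.

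First I would dispose of the range $r_I \leq 4$. Corollary 5.4 already asserts that $G$ is almost Cohen-Macaulay whenever $I$ is stretched with $r_I \leq 4$, and that statement needs no hypothesis on $\tau(I)$. Hence for $r_I \in \{2,3,4\}$ there is nothing further to check, and it remains only to treat $r_I = 5$.

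For $r_I = 5$ I would appeal directly to Theorem 5.10, whose standing hypotheses ($I$ stretched and $\tau(I) < \ell_A(I/I^2) - (d+1)\ell_A(A/I) + 1$) coincide with those of the corollary. That theorem shows $\Lambda \subseteq \{3,4\}$ and computes the relevant invariants in each of the four possibilities $\Lambda \in \{\emptyset, \{3\}, \{4\}, \{3,4\}\}$. In the first case one has $n_I = 5 = r_I$ and $G$ is Cohen-Macaulay, while in the remaining three cases the theorem records $\depth G = d-1$. In every instance $\depth G \geq d-1$, so $G$ is almost Cohen-Macaulay, which finishes the proof.

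Thus the corollary is essentially a bookkeeping assembly, and all of the genuine content lies inside the cases it invokes. The delicate point to keep in mind is the subcase $\Lambda = \{3,4\}$ of Theorem 5.10, where $r_I - n_I = 2$ places us outside the scope of Theorem 1.1; there the conclusion $\depth G = d-1$ is not obtained from our main theorem but from the external input used in the proof of Theorem 5.10, after the $\tau(I)$ hypothesis forces $\beta_2 = 0$ (Lemma 5.5) and hence $Q \cap I^3 = QI^2$. In this sense the $\tau(I)$ assumption is exactly what rules out $2 \in \Lambda$ and reduces the $r_I = 5$ analysis to the four cases handled there; it is the only nontrivial ingredient beyond Corollary 5.4.
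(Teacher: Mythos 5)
Your proposal is correct and matches the paper's own (implicit) argument: the corollary is obtained by splitting on $r_I$, citing Corollary 5.4 for $r_I \leq 4$ and Theorem 5.10 for $r_I = 5$, where every case $\Lambda \subseteq \{3,4\}$ yields $\depth G \geq d-1$. Your closing observation about the role of Lemma 5.5 and the external input [8, Theorem 2.4] in the case $\Lambda = \{3,4\}$ also reflects exactly how the paper handles that subcase.
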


In the end of this section, let us introduce some examples of stretched Cohen-Macaulay local ring $(A, \m)$ which satisfies $\tau(A) < \mu(\m)-d$ and $r_{\m}=5$ as follows.

\begin{ex}
The following assertions hold true.
\begin{itemize}
\item[$(1)$] Let $A=k[[u^{9},u^{19},u^{42},u^{52},u^{53}]]$ and $Q=(u^9)$.
Then $Q$ is a reduction of the maximal ideal $\m$ of $A$, $\m^2=Q\m+(u^{38})$, $\tau(A)=2$, and $\mu(\m)=r_{\m}=n_{\m}=5$. 
Therefore, $\ell_A(A/\m^{n+1})=9(n+1)-18$ for all $n \geq 4$ and $\rmG(\m)$ is Cohen-Macaulay.

\item[$(2)$] Let $A=k[[u^{9},u^{19},u^{33},u^{43},u^{53},u^{68}]]$ and $Q=(u^9)$.
Then $Q$ is a reduction of the maximal ideal $\m$ of $A$, $\m^2=Q\m+(u^{38})$, $\tau(A)=3$, $\mu(\m)=6$, $r_{\m}=5$, $n_{\m}=4$, and $\Lambda_{\m}=\{4\}$.
Therefore, $\ell_A(A/\m^{n+1})=9(n+1)-16$ for all $n \geq 4$ and $\rmG(\m)$ is not Cohen-Macaulay.

\item[$(3)$] Let $A=k[[u^{9},u^{19},u^{33},u^{43},u^{53},u^{77}]]$ and $Q=(u^9)$.
Then $Q$ is a reduction of the maximal ideal $\m$ of $A$, $\m^2=Q\m+(u^{38})$, $\tau(A)=3$, $\mu(\m)=6$, $r_{\m}=5$, $n_{\m}=4$, and $\Lambda_{\m}=\{3\}$.
Therefore, $\ell_A(A/\m^{n+1})=9(n+1)-15$ for all $n \geq 4$ and $\rmG(\m)$ is not Cohen-Macaulay.

\item[$(4)$] Let $A=k[[u^{10},u^{21},u^{26},u^{37},u^{48},u^{59},u^{64},u^{75}]]$ and $Q=(u^{10})$.
Then $Q$ is a reduction of the maximal ideal $\m$ of $A$, $\m^2=Q\m+(u^{42})$, $\tau(A)=5$, $\mu(\m)=8$, $r_{\m}=5$, $n_{\m}=3$, and $\Lambda_{\m}=\{3, 4\}$.
Therefore, $\ell_A(A/\m^{n+1})=9(n+1)-13$ for all $n \geq 4$ and $\rmG(\m)$ is not Cohen-Macaulay.
\end{itemize}
\end{ex}

\section{Example}

Stretched ideals could have arbitrarily high reduction number against the index of nilpotency. 
In this section we analyze a few examples of one-dimensional stretched Cohen-Macaulay local rings $(A, \m)$.

Let $b$, $e$ and $\ell$ are integers with $b \geq 2$ and $2 \leq \ell \leq e-1$.
We set $\{b_n \in \mathbb{Z} \ | \  n=1, \ell+1, \ell+2, \cdots,e-1 \}$ be integers with $b_1=b$, $\lceil \frac{b}{2} \rceil n +1 \leq b_n$ for all $\ell+1 \leq n \leq e-1$, $b_{\ell+1} \leq b\ell+b-1$, and $b_{n+1} \leq b_{n}+\lceil \frac{b}{2} \rceil$ for all $\ell+1 \leq n \leq e-2$ where $\lceil q \rceil =\min\{n \in \Z \ | \ n \geq q \}$ for a quotient number $q \in \mathbb{Q}$.
We notice here that the inequalities $\lceil \frac{b}{2} \rceil n+1 \leq b_n \leq (b-1)n+\ell$ hold true for $\ell+1 \leq n \leq e-1$.
We set $r=\max\{ n < e \ | \ b_n > bn-n+1 \} \cup \{\ell\}$.

Let $$H=\langle e, b_1e+1, b_ne+n \ | \ \ell+1 \leq n \leq e-1 \rangle$$ be the numerical semi-group generated by $e$, $b_1e+1$, and $b_{n}e+n$ for $\ell+1 \leq n \leq e-1$.
We set $$C=k[H]=k[ u^e, u^{be+1},u^{  b_{\ell+1}e+(\ell+1) },\cdots,u^{b_{e-1}e+e-1} ] \subseteq k[u]$$ be a numerical semi-group ring of $H$, where $k[u]$ denotes the polynomial ring with one indeterminate $u$ over a field $k$.
Let $a=a_0=u^e$, $a_1=u^{be+1}$, and $a_n=u^{b_ne+n}$ for $\ell+1 \leq n \leq e-1$ and put $\q=aC$ and $\n=(a,a_1,a_{\ell+1},a_{\ell+2},\cdots,a_{e-1})C$ is the graded maximal ideal of $C$.

We then have the following lemmas.

\begin{lem}
Let $n_1,n_2 \in \{1, \ell+1,\ell+2,\cdots,e-1\}$ be integers.
Then the following assertions hold true:
\begin{itemize}
\item[$(1)$] $a_{n_1}a_{n_2}=a^{b_{n_1}+b_{n_2}-b_{n_1+n_2}}a_{n_1+n_2}$ holds true if $n_1+n_2 <e$. 
\item[$(2)$] $a_{n_1}a_{n_2}=a^{b_{n_1}+b_{n_2}+1-bq_{n_1+n_2}}a_{1}^{q_{n_1+n_2}}$ holds true if $n_1+n_2 \geq e$ where $q_{n_1+n_2}$ be an integer with $0 \leq q_{n_1+n_2} \leq e-2$ and $n_1+n_2=e+q_{n_1+n_2}$.
\end{itemize}
\end{lem}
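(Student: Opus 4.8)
The plan is to prove both identities by a direct computation of $u$-exponents in the polynomial ring $k[u]$, since every element in play is a monomial. Recall that $a = u^e$ and $a_n = u^{b_n e + n}$ for each admissible index $n$ (with $b_1 = b$), so that for any $n_1, n_2$ in the index set
$$ a_{n_1} a_{n_2} = u^{(b_{n_1} + b_{n_2})e + (n_1 + n_2)}.$$
I would then split into the two cases according to whether $n_1 + n_2 < e$ or $n_1 + n_2 \geq e$, and in each case read off the way of writing the right-hand exponent as a nonnegative combination $k \cdot e + (\text{residue})$ with the residue matching an available generator.

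For assertion $(1)$, where $n_1 + n_2 < e$ (so that $a_{n_1+n_2} = u^{b_{n_1+n_2}e + (n_1+n_2)}$ is defined), the claimed identity $a_{n_1}a_{n_2} = a^{b_{n_1}+b_{n_2}-b_{n_1+n_2}} a_{n_1+n_2}$ is immediate on comparing exponents: both sides equal $u^{(b_{n_1}+b_{n_2})e + (n_1+n_2)}$. The only real content is that the exponent $b_{n_1}+b_{n_2}-b_{n_1+n_2}$ of $a$ is nonnegative, i.e. that $b$ is subadditive, so that the right-hand side genuinely lies in $C$. I would establish this by setting $c_n = b_n - \lceil \tfrac{b}{2}\rceil n$ and observing that the step bound $b_{n+1} \leq b_n + \lceil\tfrac{b}{2}\rceil$ makes $c_n$ non-increasing on $\{\ell+1, \ldots, e-1\}$, while the lower bound $b_n \geq \lceil\tfrac{b}{2}\rceil n + 1$ gives $c_n \geq 1$ there. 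When $n_1, n_2 \geq \ell+1$ this yields $c_{n_1+n_2} \leq c_{n_1} \leq c_{n_1} + c_{n_2}$, hence $b_{n_1+n_2} \leq b_{n_1}+b_{n_2}$; the boundary subcase $n_1 = 1$ is handled directly from the step bound together with $\lceil\tfrac{b}{2}\rceil \leq b$.

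For assertion $(2)$, where $n_1 + n_2 \geq e$, I would write $n_1 + n_2 = e + q$ with $0 \leq q \leq e-2$ and again compare exponents:
$$ a^{b_{n_1}+b_{n_2}+1-bq} a_1^{q} = u^{(b_{n_1}+b_{n_2}+1-bq)e + q(be+1)} = u^{(b_{n_1}+b_{n_2}+1)e + q} = a_{n_1}a_{n_2},$$
using $n_1+n_2 = e + q$. The remaining point is nonnegativity of the $a$-exponent, namely $b_{n_1}+b_{n_2}+1 \geq bq$. I would deduce this from the lower bounds $b_{n_i} \geq \lceil\tfrac{b}{2}\rceil n_i + 1$: writing $b = \lceil\tfrac{b}{2}\rceil + \lfloor\tfrac{b}{2}\rfloor$ and using $q \leq e-2$, the term $\lfloor\tfrac{b}{2}\rfloor q$ is dominated by $\lceil\tfrac{b}{2}\rceil e$, which is absorbed by the contribution $\lceil\tfrac{b}{2}\rceil(n_1+n_2) = \lceil\tfrac{b}{2}\rceil(e+q)$ coming from those lower bounds; the only case with an index equal to $1$ forces $q = 0$, where the inequality is trivial.

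The exponent computations are entirely routine; the one place demanding care is the verification that every exponent of $a$ produced is nonnegative, which is precisely where the hypotheses on $\{b_n\}$ (the normalization $b_1 = b$, the lower bound $\lceil\tfrac{b}{2}\rceil n + 1 \leq b_n$, and the step bound) are used. Organizing these inequality checks—especially isolating the boundary subcases involving the index $1$—is the main, though modest, obstacle.
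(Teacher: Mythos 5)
Your proof is correct, and its core is identical to the paper's: both verify the identities by writing $a_{n_1}a_{n_2}=u^{(b_{n_1}+b_{n_2})e+(n_1+n_2)}$ and regrouping the exponent as a multiple of $e$ plus the exponent of $a_{n_1+n_2}$ (resp.\ of $a_1^{q_{n_1+n_2}}$). The paper's proof of this lemma consists of exactly that formal regrouping and nothing more; it does not check nonnegativity of the exponents $b_{n_1}+b_{n_2}-b_{n_1+n_2}$ and $b_{n_1}+b_{n_2}+1-bq_{n_1+n_2}$. Your additional verification of nonnegativity --- via the non-increasing quantity $c_n=b_n-\lceil b/2\rceil n$ together with $c_n\geq 1$, and via the estimate $b_{n_1}+b_{n_2}\geq \lceil b/2\rceil(e+q_{n_1+n_2})+2$, with the index $1$ subcases isolated --- is sound, and it is precisely the inequality work that the paper postpones to the proof of Lemma 6.2, where strict positivity of these exponents is what places the products $a_{n_1}a_{n_2}$ in $\q\n$. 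So your write-up is, if anything, more self-contained than the paper's for this particular statement; the only stylistic point is that these positivity estimates get reused (in strict form) in the next lemma, which is why the paper keeps them out of Lemma 6.1 itself.
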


\begin{proof}
Suppose that $n_1+n_2 \leq e-1$.
Then we have 
\begin{eqnarray*}
a_{n_1} a_{n_2}&=&u^{b_{n_1}e+n_1}u^{b_{n_2}e+n_2}=u^{(b_{n_1}e+n_1)+(b_{n_2}e+n_2)-b_{n_1+n_2}e+b_{n_1+n_2}e}\\
&=&u^{(b_{n_1}+b_{n_2}-b_{n_1+n_2})e+b_{n_1+n_2}e+(n_1+n_2)}=a^{b_{n_1}+b_{n_2}-b_{n_1+n_2}}a_{n_1+n_2}.
\end{eqnarray*}

Suppose that $n_1+n_2 \geq e$ then we may write $n_1+n_2=e+q_{n_1+n_2}$ for some $0 \leq q_{n_1+n_2} \leq e-2$.
Then the equalities 
\begin{eqnarray*}
a_{n_1}a_{n_2}&=&u^{b_{n_1}e+n_1}u^{b_{n_2}e+n_2}
=u^{(b_{n_1}+b_{n_2})e+e+q_{n_1+n_2}-bq_{n_1+n_2}e+bq_{n_1+n_2}e}\\
&=&u^{(b_{n_1}+b_{n_2}+1-bq_{n_1+n_2})e+(be+1)q_{n_1+n_2}}
= a^{b_{n_1}+b_{n_2}+1-bq_{n_1+n_2}}a_1^{q_{n_1+n_2}}
\end{eqnarray*}
hold true.
\end{proof}

\begin{lem}
The following assertions hold true.
\begin{itemize}
\item[$(1)$] $a_1^{\ell} \notin \q$ and $a_1^e \in \q^{be+1}$,
\item[$(2)$] $\n^{n}=\q \n^{n-1}+a_1^{n}C$ holds true for all $n \geq 2$.
\end{itemize}
Therefore, we have $\n^{e}=\q\n^{e-1}$ so that $\q$ is a reduction of $\n$, and  $\ell_C(\n^2+\q/\n^3+\q)=1$.
\end{lem}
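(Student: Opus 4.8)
The plan is to transport everything into the exponent semigroup $H$ and to play Lemma 6.1 off against the two numerical hypotheses on the $b_n$: the lower bound $\lceil b/2\rceil n+1\le b_n$ and the step bound $b_{n+1}\le b_n+\lceil b/2\rceil$ (valid for $\ell+1\le n\le e-2$), together with the elementary inequality $\lceil b/2\rceil\le b-1$, which holds because $b\ge 2$. The second half of (1) is immediate: $a_1^e=u^{e(be+1)}=(u^e)^{be+1}=a^{be+1}$, so $a_1^e\in a^{be+1}C=\q^{be+1}$. Everything else rests on controlling which products of the generators $a_1,a_{\ell+1},\dots,a_{e-1}$ carry a positive power of $a$.

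For assertion (2) I would induct on $n$, the inclusion $\supseteq$ being clear since $a\in\n$ and $a_1^n\in\n^n$. In the base case $n=2$, every product of two generators that involves the factor $a$ lies in $\q\n$, so it suffices to place $a_1^2$, the products $a_1a_j$, and the products $a_{n_1}a_{n_2}$ with $\ell+1\le n_i\le e-1$ into $\q\n+a_1^2C$. The term $a_1^2$ is already in $a_1^2C$, and for the rest I apply Lemma 6.1 and check the exponent of $a$ is $\ge 1$, which puts the product in $a\n\subseteq\q\n$: for $a_1a_j$ with $j\le e-2$ the exponent is $b+b_j-b_{j+1}\ge b-\lceil b/2\rceil\ge 1$, while $a_1a_{e-1}=a^{\,b+b_{e-1}+1}$ has exponent $\ge1$; for $a_{n_1}a_{n_2}$ with $n_1+n_2<e$ the exponent $b_{n_1}+b_{n_2}-b_{n_1+n_2}\ge b_{n_2}-n_2\lceil b/2\rceil\ge 1$ by telescoping the step bound to $b_{n_1+n_2}\le b_{n_1}+n_2\lceil b/2\rceil$ and using $b_{n_2}\ge\lceil b/2\rceil n_2+1$; and when $n_1+n_2\ge e$ the exponent $b_{n_1}+b_{n_2}+1-bq$ is $\ge b+3$ from the lower bounds on $b_{n_1},b_{n_2}$ and $n_1+n_2\le 2e-2$. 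For the inductive step I write $\n^n=\n\cdot\n^{n-1}=\n(\q\n^{n-2}+a_1^{n-1}C)=\q\n^{n-1}+\n a_1^{n-1}$ and reduce to $\n a_1^{n-1}\subseteq\q\n^{n-1}+a_1^nC$: here $a\cdot a_1^{n-1}\in\q\n^{n-1}$, $a_1\cdot a_1^{n-1}=a_1^n$, and $a_j\cdot a_1^{n-1}=(a_ja_1)a_1^{n-2}$ with $a_ja_1\in a\n\subseteq\q$ from the base-case computation, so the positive power of $a$ forces $a_ja_1^{n-1}\in\q\n^{n-1}$.

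The main obstacle is the nonmembership $a_1^\ell\notin\q$, an Apéry-type statement since $a_1^\ell\in\q$ would mean $(b\ell-1)e+\ell\in H$. I would instead prove the sharper claim that $(bp-1)e+p\notin H$ for all $2\le p\le\ell$, the case $p=\ell$ giving (1) and the case $p=2$ being needed below. Writing a hypothetical $(bp-1)e+p=c_0e+c_1(be+1)+\sum_j c_j(b_je+j)$ with $c_i\ge0$ and setting $P=c_1+\sum_j c_jj$, $Q=c_0+c_1b+\sum_j c_jb_j$, one gets $P=p+te$ and $Q=bp-1-t$ for some $t\ge 0$. Substituting $b_j\ge\lceil b/2\rceil j+1$ into $c_1b+\sum_j c_jb_j\le bp-1-t$ yields $c_1\lfloor b/2\rfloor+\lceil b/2\rceil p+\lceil b/2\rceil te+\sum_j c_j\le bp-1-t$; since $e\ge\ell+1\ge p+1$ and $\lfloor b/2\rfloor\ge1$, the term $\lceil b/2\rceil te$ is too large unless $t=0$. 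With $t=0$ we have $P=p$, and as every admissible index satisfies $j\ge\ell+1>p$, all $c_j$ vanish and $c_1=p$; then $Q=c_0+bp=bp-1$ forces $c_0=-1<0$, a contradiction. Getting this bookkeeping exactly right, in particular the forcing of $t=0$, is the delicate heart of the argument.

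Finally the three ``Therefore'' assertions follow formally. Taking $n=e$ in (2) and using $a_1^e=a^{be+1}=a\cdot a^{be}\in\q\n^{e-1}$ (as $be\ge e-1$ gives $a^{be}\in\n^{e-1}$) yields $\n^e=\q\n^{e-1}$, so $\q$ is a reduction of $\n$. For stretchedness, (2) gives $\n^2+\q=\q+a_1^2C$ and $\n^3+\q=\q+a_1^3C$, so $(\n^2+\q)/(\n^3+\q)$ is cyclic on the image of $a_1^2$; it is annihilated by $\n$ because $a\cdot a_1^2\in\q$, $a_1\cdot a_1^2=a_1^3$, and $a_ja_1^2=(a_ja_1)a_1\in\q$, giving length $\le1$. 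It is nonzero because $C$ is graded and $a_1^3C$ lives in degrees exceeding $\deg a_1^2$, so $a_1^2\in\q+a_1^3C$ would force $a_1^2\in\q$, which is excluded by the $p=2$ case. Hence $\ell_C\!\big((\n^2+\q)/(\n^3+\q)\big)=1$.
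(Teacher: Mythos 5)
Your proposal is correct and follows essentially the same route as the paper: the same use of Lemma 6.1 together with the bounds $\lceil b/2\rceil n+1\le b_n$ and $b_{n+1}\le b_n+\lceil b/2\rceil$ to force a positive power of $a$ on every cross-product (hence membership in $\q\n$), followed by the evident induction on $n$. The only difference is completeness: where the paper merely asserts that $u^{(b\ell-1)e+\ell}\in C$ ``is impossible'' and leaves the final length-one statement implicit, you supply the full semigroup bookkeeping --- the nonmembership $(bp-1)e+p\notin H$ for $2\le p\le\ell$, whose case $p=2$ also gives $a_1^2\notin\q$ for the graded argument that $\ell_C(\n^2+\q/\n^3+\q)=1$ --- which fills in exactly what the paper omits.
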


\begin{proof}
$(1)$: Assume that $a_1^{\ell}=u^{(be+1)\ell} \in \q$ then we have $u^{(be+1)\ell-e}=u^{(b\ell-1)e+\ell} \in C$. 
However, it is impossible.
It is easy to see that $a_1^e=u^{(be+1)e}=a^{be+1} \in \q^{be+1}$ holds true.

\noindent
$(2)$: We have only to show that the equality 
$$\n^2=\q\n+a_1^2C+a_1(a_n \ | \ \ell \leq n \leq e-1)C+(a_{n_1}a_{n_2} \ | \ \ell \leq n_1, n_2 \leq e-1)C=\q\n+a_1^2C$$
holds true.
Because the inequalities $$b_{n}-b_{n+1}+b > b_n-b_{n+1}+\left \lceil \frac{b}{2} \right \rceil \geq 0$$ holds true by our assumption, we get 
$a_{1}a_{n}=a^{b_n+b_1-b_{n+1}}a_{n+1} \in \q\n$ for $\ell+1 \leq n  \leq e-2$ by Lemma 6.1 $(1)$.
It is easy to check that $a_1a_{e-1}=a^{b_{e-1}+b_1+1} \in \q^2$ holds true.

Let $\ell+1 \leq n_1,n_2 \leq e-1$ be integers and assume that $n_1+n_2 <e$.
Then since $b_{n_1} \geq b_{n_1+n_2}-\lceil \frac{b}{2} \rceil n_2$ and $b_{n_2} \geq b_{n_1+n_2}-\lceil \frac{b}{2} \rceil n_1$, we have $$b_{n_1}+b_{n_2}-b_{n_1+n_2} \geq b_{n_1+n_2}-\left \lceil \frac{b}{2} \right \rceil (n_1+n_2) > 0$$ by our assumption.
Hence, we get  $a_{n_1}a_{n_2}=a^{b_{n_1}+b_{n_2}-b_{n_1+n_2}}a_{n_1+n_2} \in \q\n$ by Lemma 6.1 $(1)$.

Assume that $n_1 +n_2>e$ and write $n_1+n_2=e+q_{n_1+n_2}$ for $0 \leq q_{n_1+n_2} \leq e-2$.
We then have 
\begin{eqnarray*}
b_{n_1}+b_{n_2} &\geq& \left\lceil \frac{b}{2} \right\rceil(n_1+n_2)+2 \geq \frac{b}{2} (e+q_{n_1+n_2})+2 \\
&\geq& \frac{b}{2}(2q_{n_1+n_2}+2)+2=bq_{n_1+n_2}+b+2
\end{eqnarray*}
 by our assumption.
Hence $b_{n_1}+b_{n_2}+1-bq_{n_1+n_2}>0$ holds true.
Therefore, we get  $a_{n_1}a_{n_2}=a^{b_{n_1}+b_{n_2}+1-bq_{n_1+n_2}}a_{1}^{q_{n_1+n_2}} \in \q\n$ by Lemma 6.1 $(2)$.

Thus, the required equality $\n^2=\q\n+a_1^2C$ is satisfied.
\end{proof}

We furthermore have the following proposition which is related to the reduction number of the graded maximal ideal $\n$ of $C$.

\begin{prop}
Let $m > n>0$ be integers with $\ell+1 \leq m \leq e-1$. 
Then the inequality $b_{m} \leq bm-n$ holds true if and only if $a_1^{m} \in \q\n^{n}$.
\end{prop}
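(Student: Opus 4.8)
The plan is to collapse the stated equivalence into a single computation of an $\n$-adic order. First I would record the monomial identity $a_1^m=u^{(be+1)m}=u^{(bm-b_m)e}\,u^{b_me+m}=a^{bm-b_m}a_m$, obtained by matching $u$-exponents; here $bm-b_m\ge 1$ since $b_m\le (b-1)m+\ell\le bm-1$. Because $\q\n^n=a\n^n$, because $C$ is a domain in which $a$ is a nonzerodivisor, and because $a_1^m/a=a^{bm-b_m-1}a_m$ already lies in $C$, membership $a_1^m\in\q\n^n$ is equivalent to $a^{bm-b_m-1}a_m\in\n^n$. Thus it suffices to prove that $\mathrm{ord}_{\n}(a^{bm-b_m-1}a_m)=bm-b_m$, where $\mathrm{ord}_{\n}(z)=\max\{k\ |\ z\in\n^k\}$: both directions of the proposition are then immediate from $a^{bm-b_m-1}a_m\in\n^n\iff n\le bm-b_m\iff b_m\le bm-n$.

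Since $\n$ is generated by the monomials $a=u^e$, $a_1=u^{be+1}$ and $a_j=u^{b_je+j}$, for $N\in H$ one has $u^N\in\n^k$ precisely when $N$ is a sum of at least $k$ of these generators (with repetition); hence $\mathrm{ord}_{\n}(u^N)$ is the largest number of generators occurring in such a representation. For $N=(bm-1)e+m$ the factorization into $bm-b_m-1$ copies of $a$ and one $a_m$ gives $\mathrm{ord}_{\n}\ge bm-b_m$, so the real content is the reverse inequality: no representation of $N$ uses more than $bm-b_m$ generators.

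To prove this I would parametrize a representation by the multiplicities $x_0,p,q_j\ge 0$ of $a,a_1,a_j$. Comparing the coefficient of $e$ with the residual part of $N$ forces $p+\sum_j jq_j=m+te$ for some integer $t\ge 0$ together with $x_0=bm-1-t-bp-\sum_j b_jq_j\ge 0$, and the number of generators equals $\nu=bm-1-F$ with $F:=t+(b-1)p+\sum_j(b_j-1)q_j$. The target $\nu\le bm-b_m$ is therefore exactly $F\ge b_m-1$. When $t\ge 1$ this is soft: the bounds $b-1\ge\lceil b/2\rceil$ and $b_j-1\ge\lceil b/2\rceil\,j$ give $F\ge\lceil b/2\rceil(m+te)+t$, which already exceeds $b_m-1$ because $b_m\le (b-1)m+\ell$ and $m\le e-1$.

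The crux is $t=0$, and this is where I expect the main obstacle. Substituting $p=m-\sum_j jq_j$ turns the goal into $\sum_j\phi(j)q_j\le\phi(m)$, where $\phi(j):=(b-1)j-b_j+1$. Here the constraint $x_0\ge 0$ is indispensable and must be kept active throughout: without it the all-$a_1$ choice $p=m$, $q_j=0$ would make $F$ smaller than $b_m-1$, but that choice forces $x_0=-1<0$ and is thereby excluded, so some $q_j\ge 1$ and all indices with $q_j>0$ lie in $[\ell+1,m]$ with $\sum_j jq_j\le m\le e-1$. On this range $\phi$ is nondecreasing, since $\phi(j+1)-\phi(j)=(b-1)-(b_{j+1}-b_j)\ge (b-1)-\lceil b/2\rceil\ge 0$, and superadditive, since $\phi(j_1)+\phi(j_2)\le\phi(j_1+j_2)$ is equivalent to $b_{j_1}+b_{j_2}\ge b_{j_1+j_2}+1$, which was already established in the proof of Lemma 6.2. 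Iterating superadditivity and then applying monotonicity gives $\sum_j\phi(j)q_j\le\phi\bigl(\sum_j jq_j\bigr)\le\phi(m)$, which yields $F\ge b_m-1$ and completes the order computation. The delicate point to get right is precisely the coupling between the nonnegativity constraint and the superadditivity estimate: the desired inequality is simply false if $x_0\ge 0$ is dropped, so the write-up must invoke it before passing to $\phi$.
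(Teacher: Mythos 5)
Your proof is correct, but it takes a genuinely different route from the paper's. The paper proves the ``only if'' direction by induction on $m$: it writes $a_1^m=\tau a\prod_{k=1}^n a_{i_k}$, cancels a factor of $a_1$ to invoke the inductive hypothesis whenever some $i_k=1$ (using the step bound $b_m\le b_{m-1}+\lceil b/2\rceil$), and otherwise uses the product formulas from the proof of Lemma 6.2 (namely $a_{n_1}a_{n_2}\in\q\n$) to reduce to a representation $a_1^m\in\q^n$, concluding from $u^{(bm-n)e+m}\in C$ that $b_m\le bm-n$. You instead prove the sharper statement that the $\n$-adic order of $a_1^m/a$ equals $bm-b_m$ exactly, by analyzing all semigroup representations of $(bm-1)e+m$ --- essentially an Ap\'ery-set computation --- via the function $\phi(j)=(b-1)j-b_j+1$, whose monotonicity and superadditivity encode precisely the numerical hypotheses on the $b_n$ (superadditivity being the inequality $b_{j_1}+b_{j_2}\ge b_{j_1+j_2}+1$ established in the proof of Lemma 6.2, which the paper also relies on). Your version buys three things: it handles both directions simultaneously; it is valid for every $n>0$ rather than only $n<m$; and it makes explicit the semigroup-minimality fact that the paper leaves implicit in its final sentence (that $(bm-n)e+m\in H$ forces $bm-n\ge b_m$) --- that step is really the same combinatorial content as your bound $F\ge b_m-1$, so your write-up fills in what the paper glosses over. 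The paper's induction, by contrast, is shorter on the page because it recycles Lemma 6.2 wholesale. Your identification of where $x_0\ge 0$ is indispensable (excluding the all-$a_1$ representation in the $t=0$ case) is the right delicate point, and your treatment of it is correct.
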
 

\begin{proof}
Suppose that the inequality $b_m \leq bm-n$ holds true.
Then we have $$a_1^m=u^{(be+1)m}=u^{(bm-b_m)e+b_me+m}=a^{bm-b_m}a_m \in \q \n^n.$$

On the other hand, suppose that $a_1^m \in \q\n^n$ and write $a_1^m=\tau a \prod_{k=1}^{n} a_{i_k}$ for some $\tau \in C$ and $i_k \in \{ 0,1,\ell+1, \ell+2,\cdots,e-1 \}$ for $1 \leq k \leq n$.

We proceed by induction on $m$.
Assume that $m=\ell+1$. 
Then since $a_1^{\ell} \notin \q$ by Lemma 6.2 $(1)$, we have $i_k \neq 1$ for all $1 \leq k \leq n$.
Therefore, since $a_{n_1}a_{n_2} \in \q\n$ for $\ell+1 \leq n_1,n_2 \leq e-1$ by the proof of Lemma 6.2, 
we may choose $a_{i_k}=a$ for all $1 \leq k \leq n-1$ so that $a_1^{m}=u^{bme+m} \in \q^n=u^{ne}C$.
Therefore, we get $bm-n \geq b_m$ because $u^{(bm-n)e+m} \in C$.

Assume that $m > \ell+1$ and that our assertion holds true for $m-1$.
If $a_{i_k}=a_1$ for some $1 \leq k \leq m$ then $a_1^{m-1} \in \q\n^{n-1}$.
Thus, by the hypothesis of induction on $m$, we get $$b_m \leq b_{m-1}+\left\lceil \frac{b}{2} \right\rceil \leq (b(m-1)-n+1)+ \left \lceil \frac{b}{2} \right \rceil \leq bm-n$$ as required.
We may take $i_k \in \{0,\ell+1,\ell+2,\cdots,e-1 \}$ so that we get the required inequality $b_m \leq bm-n$ by the same argument as above.
\end{proof}

The following corollary follows by Proposition 6.3.

\begin{cor}
Let $\ell+1 \leq n \leq e-1$ be an integer.
Then the following assertions hold true.
\begin{itemize}
\item[$(1)$] $a_1^n \in \q\n^{bn-b_n} \backslash \q\n^{bn-b_n+1}$ holds true.
\item[$(2)$] $\n^n=\q\n^{n-1}$ if and only if $b_n \leq bn-n+1$.
\end{itemize}
\end{cor}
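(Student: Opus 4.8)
The plan is to derive both assertions from Proposition 6.3, with part $(2)$ also invoking Lemma 6.2 $(2)$. Throughout I would use that the standing bounds $\lceil \frac{b}{2}\rceil n + 1 \le b_n \le (b-1)n + \ell$ give $bn - b_n \ge n - \ell \ge 1$, so the exponent $bn - b_n$ in assertion $(1)$ is a positive integer and, since $\ell+1 \le n \le e-1$, the element $a_n$ is a genuine generator of $\n$. For the membership half of $(1)$ I would argue directly, exactly as in the first computation of the proof of Proposition 6.3: since $b_n \le bn$ we may write
$$ a_1^n = u^{(be+1)n} = u^{(bn-b_n)e + b_n e + n} = a^{bn-b_n}a_n, $$
and because $a \in \q$ while the remaining $bn-b_n-1$ copies of $a$ together with $a_n$ lie in $\n$, this exhibits $a_1^n \in \q\,\n^{bn-b_n}$.

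For the non-membership half of $(1)$ I would invoke the contrapositive of the implication packaged in Proposition 6.3, namely that $a_1^n \in \q\,\n^{N}$ forces $b_n \le bn - N$. Applying this with $N = bn - b_n + 1$ would yield $b_n \le bn - (bn - b_n + 1) = b_n - 1$, which is absurd; hence $a_1^n \notin \q\,\n^{bn-b_n+1}$, completing $(1)$.

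Finally, part $(2)$ I would read off from Lemma 6.2 $(2)$, which gives $\n^n = \q\n^{n-1} + a_1^n C$, so that $\n^n = \q\n^{n-1}$ holds if and only if $a_1^n \in \q\n^{n-1}$. Combined with part $(1)$, which identifies $bn - b_n$ as the largest $k$ with $a_1^n \in \q\n^{k}$, this is equivalent to $n-1 \le bn - b_n$, i.e. to $b_n \le bn - n + 1$. Equivalently, one may apply Proposition 6.3 directly with $N = n-1$, noting $n > n-1 > 0$.

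The main obstacle is making sure Proposition 6.3 is genuinely applicable in the non-membership step. It is stated only for $m > N > 0$, whereas $N = bn - b_n + 1$ can exceed $n$ precisely when $b_n$ is small (for instance near its lower bound $\lceil \frac{b}{2}\rceil n + 1$ with $b \ge 3$). I would therefore either verify that the implication ``$a_1^n \in \q\n^{N} \Rightarrow b_n \le bn - N$'' survives without the restriction $m > N$ by re-running the induction of Proposition 6.3, which only ever passes to the pair $(m-1, N-1)$ and relies on the step bound $b_{m+1} \le b_m + \lceil \frac{b}{2}\rceil$, or else argue the non-membership intrinsically by showing that the maximal length of a representation of $(bn-1)e + n$ as a sum of semigroup generators equals $bn - b_n$. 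The latter reduces to the superadditivity of $b_{\bullet}$ encoded in $b_{m+1} \le b_m + \lceil \frac{b}{2}\rceil$, and checking this boundary case carefully is where the real work lies; the rest of the argument is formal.
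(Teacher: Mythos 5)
Your proof is correct and follows the route the paper intends: the paper disposes of this corollary with the single sentence that it ``follows by Proposition 6.3,'' and your direct membership computation, the contrapositive for non-membership, and the reduction of part $(2)$ to Lemma 6.2 $(2)$ together with Proposition 6.3 at $N=n-1$ (where the hypothesis $n>n-1>0$ does hold) are exactly what that sentence must mean. What you add, and rightly so, is the observation that the non-membership half of $(1)$ does \emph{not} follow from Proposition 6.3 as literally stated: one needs $N=bn-b_n+1<n$, i.e.\ $b_n\ge (b-1)n+2$, and this fails exactly for those $n$ with $b_n\le bn-n+1$ (the indices where $\n^n=\q\n^{n-1}$). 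The worry is not vacuous even for $b=2$: in the paper's own Example 6.8 $(1)$ (where $b=2$, $e=6$, $\ell=3$, $b_4=5$) one gets $N=bn-b_n+1=n$ at $n=4$. Your first proposed repair is the right one and does go through: in the proof of Proposition 6.3 the hypothesis $m>n$ is used only so that the inductive appeal to the pair $(m-1,n-1)$ stays inside the stated range; restating the ``membership implies inequality'' direction for all $N\ge 1$, the base case $m=\ell+1$ works verbatim for any $N$, the inductive step passes from $(m,N)$ to $(m-1,N-1)$ using only $b_m\le b_{m-1}+\lceil \frac{b}{2}\rceil\le b(m-1)-(N-1)+\lceil \frac{b}{2}\rceil\le bm-N$ (valid since $\lceil \frac{b}{2}\rceil\le b-1$), and the degenerate case $N-1=0$ is trivial because $b_m\le (b-1)m+\ell\le bm-1$ for $m\ge \ell+1$. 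With that extension of Proposition 6.3 carried out, your argument is a complete proof, and it patches a boundary case that the paper's one-line citation silently skips.
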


We set $W_n:= \q\n^{n-1} \cap \n^{n+1}/\q\n^n$ for $n \geq 1$.
Notice that $b{n_1}-b_{n_1}+1 < bn_2-b_{n_2} +1$ for all $\ell+1 \leq n_1 < n_2 \leq e-1$, and recall that $r=\max\{ n < e \ | \ b_n > bn-n+1 \} \cup \{\ell\}$.

Then we have the following lemma.

\begin{lem}
$W_{bn-b_n+1} \neq (0)$ for all $\ell+1 \leq n \leq r$.
\end{lem}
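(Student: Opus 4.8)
The plan is to exhibit an explicit nonzero element of $W_m$ for $m=bn-b_n+1$, namely the class of the monomial $a_1^n$. Concretely, I would verify the three conditions
$$a_1^n\in \q\n^{m-1},\qquad a_1^n\in \n^{m+1},\qquad a_1^n\notin \q\n^m,$$
which together say that $a_1^n$ lies in the numerator $\q\n^{m-1}\cap \n^{m+1}$ but not in the denominator $\q\n^m$ of $W_m$, and hence represents a nonzero class.

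The two conditions involving $\q$ are furnished directly by Corollary 6.4$(1)$. Since $m-1=bn-b_n$ and $m=bn-b_n+1$, that corollary reads precisely $a_1^n\in \q\n^{bn-b_n}=\q\n^{m-1}$ and $a_1^n\notin \q\n^{bn-b_n+1}=\q\n^m$, so these require no further work. The remaining task is the membership $a_1^n\in \n^{m+1}$.

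For this I would first record that $m+1\le n$. Rewriting, $m+1=bn-b_n+2\le n$ is equivalent to $b_n\ge (b-1)n+2$, i.e. to $b_n>bn-n+1$. Granting this inequality, $n-m-1\ge 0$, and the factorization $a_1^n=a_1^{\,n-m-1}\cdot a_1^{\,m+1}$ displays $a_1^n$ as an element of $a_1^{\,m+1}C\subseteq \n^{m+1}$, because $a_1\in \n$. This completes the verification of the three conditions, and hence shows $W_m\neq(0)$.

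The step that genuinely needs care — and the one I expect to be the main obstacle — is justifying $b_n>bn-n+1$ for \emph{every} $n$ in the range $\ell+1\le n\le r$, rather than only for $n=r$. I would do this by proving that $f(n):=b_n-(b-1)n$ is non-increasing on $\{\ell+1,\dots,e-1\}$: the hypothesis $b_{n+1}\le b_n+\lceil b/2\rceil$ gives $f(n+1)-f(n)\le \lceil b/2\rceil-(b-1)=1-\lfloor b/2\rfloor\le 0$ since $b\ge 2$. As $b_n>bn-n+1$ is exactly the condition $f(n)>1$, monotonicity forces $\{\,n\in\{\ell+1,\dots,e-1\}: f(n)>1\,\}$ to be an initial segment, which by the definition of $r$ is $\{\ell+1,\dots,r\}$. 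Hence $f(n)>1$ holds throughout $\ell+1\le n\le r$, exactly as needed; when $r=\ell$ the stated range is empty and there is nothing to prove.
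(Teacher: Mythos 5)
Your proof is correct, and it is genuinely leaner than the paper's. Both arguments rest on the same witness $a_1^n$ and on Corollary 6.4 $(1)$, but they use it differently. The paper proves, by descending induction on $m$, the inclusion $\q\n^{bn-b_n}\cap\n^{m}\subseteq \q\n^{bn-b_n+1}+a_1^nC$ for $bn-b_n+2\le m\le n$, thereby obtaining the equality $\q\n^{bn-b_n}\cap\n^{bn-b_n+2}=\q\n^{bn-b_n+1}+a_1^nC$ --- a complete description of the numerator of $W_{bn-b_n+1}$, showing that this module is cyclic, generated by the class of $a_1^n$ --- and only then reads off nonvanishing from $a_1^n\notin\q\n^{bn-b_n+1}$. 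You observe that for bare nonvanishing the inductive inclusion is unnecessary: the three memberships $a_1^n\in\q\n^{bn-b_n}$ and $a_1^n\notin\q\n^{bn-b_n+1}$ (both from Corollary 6.4 $(1)$) together with $a_1^n\in\n^n\subseteq\n^{bn-b_n+2}$ (trivial once $bn-b_n+2\le n$) already exhibit a nonzero class. Indeed, even the paper's closing deduction uses only the ``$\supseteq$'' half of its equality, so your shortcut loses nothing for the lemma as stated; what the paper's longer route buys is the extra structural information about $W_{bn-b_n+1}$, not the nonvanishing itself. A further point in your favor: the paper merely asserts the inequality $bn-b_n+2\le n$ for all $\ell+1\le n\le r$ (``notice that \dots''), whereas you justify it, showing $f(n)=b_n-(b-1)n$ is non-increasing because $f(n+1)-f(n)\le\lceil b/2\rceil-(b-1)=1-\lfloor b/2\rfloor\le 0$ for $b\ge 2$, so the condition $f(n)>1$ defining the set whose maximum is $r$ holds on the whole segment $\{\ell+1,\dots,r\}$ (vacuously when $r=\ell$). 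That monotonicity check is precisely the delicate point, and your treatment of it is correct.
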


\begin{proof}
Let $\ell+1 \leq n \leq r$ and notice that the inequality $bn-b_n+2 \leq n$ holds true.
Let us show the inclusion $$ \q\n^{bn-b_n}\cap \n^{m} \subseteq  \q\n^{bn-b_n+1}+a_1^{n}C$$
holds true for all $bn-b_n+2 \leq m \leq n$ by induction on $m$.

We may assume that $m < n$ and that our assertion holds true for $m+1$.
We have
$$\q\n^{bn-b_n} \cap \n^{m} \subseteq \q\n^{bn-b_n+1}+\q\n^{bn-b_n} \cap a_1^{m}C.$$
Let $\tau a_1^{m} \in \q\n^{bn-b_n} \cap a_1^{m}C$ for some $\tau \in C$.
If $a_1^m \in \q\n^{bn-b_n}$ then we have $a_1^m \in \q\n^{bm-b_m+1}$ since $bm-b_m+1 \leq bn-b_n$.
However, it is impossible by Corollary 6.4 $(1)$ and hence we get $\tau \in \n$.
Then, by the hypothesis of induction on $m$, the required inclusion $$\tau a_1^{m} \in \q\n^{bn-b_n} \cap \n^{m+1} \subseteq \q\n^{bn-b_n+1}+a_1^nC$$ holds true.
Thus, because $$\q \n^{bn-b_n} \cap \n^{bn-b_n+2}=\q \n^{bn-b_n+1}+a_1^{n}C$$ and $a_1^{n} \in \q\n^{bn-b_n} \backslash \q\n^{bn-b_n+1}$ by Lemma 6.4 $(1)$, we get $W_{bn-b_n+1} \neq (0)$ as required.
\end{proof}

Let $A=C_{\n}$ be a localization of $C$ at $\n$ and $\m=\n A$ the maximal ideal of $A$.
We set $Q=\q A$ and then $Q$ is a parameter ideal in $A$ which forms reduction of $\m$ by Lemma 6.2.
We then have the following theorem.

\begin{thm}
The following assertions hold true.
\begin{itemize}
\item[$(1)$] $A$ is a stretched Cohen-Macaulay local ring with $\dim A=1$, $\tau(A)=e-\ell$, and $\mu_A(\m)=e-\ell+1$.
\item[$(2)$] $n_{\m}=n_Q(\m)=\ell$ and $r_{\m}=r_Q(\m)=r$.
\item[$(3)$] $\Lambda_{\m}=\Lambda_Q(\m)=\{bn-b_n+1 \ | \ \ell+1 \leq n \leq r \}.$
\item[$(4)$] $\ell_A(A/\m^{n+1})=e(n+1)-(e-1+\binom{\ell}{2}+\sum_{n=\ell+1}^{r}(b_n-bn+n-1))$ for all $n \geq r-1$.
\item[$(5)$] The following two conditions are equivalent to each other:
\begin{itemize}
\item[$(i)$] ${\rmG}(\m)$ is Cohen-Macaulay and
\item[$(ii)$] $b_{\ell+1} \leq b\ell+b-\ell$ that is the equality $r=\ell$ holds true.
\end{itemize}
\end{itemize}
\end{thm}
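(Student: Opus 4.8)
The plan is to establish the five assertions largely separately, feeding the general results of Sections~2 and~4 with the explicit multiplication rules of Lemma~6.1 and the reduction-theoretic statements of Lemmas~6.2--6.5. For the ring-theoretic content of~$(1)$ I would argue that $C=k[H]$ is a one-dimensional Cohen--Macaulay domain, so $A=C_{\n}$ is a one-dimensional Cohen--Macaulay local ring; by Lemma~6.2, $\q=aC$ is a reduction of $\n$ and $\ell_C(\n^2+\q/\n^3+\q)=1$, and since the first defining condition of a stretched ideal is automatic for $I=\m$, the ring $A$ is stretched. Because $a=u^e$ is a nonzerodivisor, $\tau(A)=\dim_k\Soc(C/\q)$, and a $k$-basis of $C/\q$ is the Ap\'ery set $\mathrm{Ap}(H,e)=\{h\in H\mid h-e\notin H\}$. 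Using Lemma~6.1 I would identify it as
$$\mathrm{Ap}(H,e)=\{0\}\cup\{bje+j\mid 1\le j\le \ell\}\cup\{b_je+j\mid \ell+1\le j\le e-1\},$$
so that $0=w_0<_Hw_1<_H\cdots<_Hw_{\ell}$ is a chain built by successively adding $a_1$ (here $x\le_Hy$ means $y-x\in H$), while the hypotheses $\lceil\frac b2\rceil n+1\le b_n$ and $b_{n+1}\le b_n+\lceil\frac b2\rceil$ force each of $w_{\ell},w_{\ell+1},\dots,w_{e-1}$ to be $\le_H$-maximal. This leaves exactly $e-\ell$ maximal elements, giving $\tau(A)=e-\ell$. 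The count $\mu_A(\m)=e-\ell+1$ is then cleanest to read off as $\mu_A(\m)=\ell_A(\n/\n^2)=\ell_A(\n/\q\n)-\alpha_1=e-(\ell-1)$, once~$(2)$ supplies $n_{\m}=\ell$ and Lemma~2.2$(1)$ gives $\alpha_1=n_{\m}-1$ (and $\e_0(\m)=\ell_A(A/\q)=e$).

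For~$(2)$ I would rewrite $\n^{n+1}=\q\n^n+a_1^{n+1}C$ (Lemma~6.2$(2)$), so that $\n^{n+1}\subseteq\q$ is equivalent to $a_1^{n+1}\in\q$. Since $a_1^{\ell}\notin\q$ by Lemma~6.2$(1)$ we get $a_1^m\notin\q$ for $m\le\ell$, whence $n_{\m}\ge\ell$; conversely the standing hypothesis $b_{\ell+1}\le b\ell+b-1$ is exactly the instance $m=\ell+1$, $n=1$ of Proposition~6.3, yielding $a_1^{\ell+1}\in\q\n\subseteq\q$ and $n_{\m}\le\ell$. For the reduction number, Corollary~6.4$(2)$ gives $\n^n=\q\n^{n-1}\iff b_n\le bn-n+1$ for $\ell+1\le n\le e-1$, while $\n^n\ne\q\n^{n-1}$ for $2\le n\le\ell$ (otherwise $a_1^n\in\q$) and $\n^e=\q\n^{e-1}$; taking the largest $n$ with $\n^n\ne\q\n^{n-1}$ gives $r_{\m}=r$. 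Assertion~$(3)$ is then a counting argument: Lemma~6.5 places the $r-\ell$ pairwise distinct integers $bn-b_n+1$ ($\ell+1\le n\le r$) in $\Lambda_{\m}$, and since $|\Lambda_{\m}|=r_{\m}-n_{\m}=r-\ell$ by Proposition~2.3$(1)$, these exhaust $\Lambda_{\m}$.

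For~$(4)$ I would use $d=1$. Setting $h_i=\ell_A(\n^i/\n^{i+1})$ and computing $\ell_A(\n^i/\q\n^i)$ along the two filtrations $\q\n^i\subseteq\q\n^{i-1}\subseteq\n^i$ and $\q\n^i\subseteq\n^{i+1}\subseteq\n^i$ yields $h_i+\alpha_i=h_{i-1}+\alpha_{i-1}$, hence $h_i=e-\alpha_i$ after telescoping (using $h_0=1$ and $\alpha_0=\e_0(\m)-1=e-1$). Summing and using $\alpha_i=0$ for $i\ge r$ gives $\ell_A(A/\m^{n+1})=e(n+1)-\{(e-1)+\sum_{i\ge1}\alpha_i\}$ for all $n\ge r-1$. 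Finally Proposition~2.3$(2)$ together with $\Lambda_{\m}$ from~$(3)$ evaluates $\sum_{i\ge1}\alpha_i=\binom{r}{2}-\sum_{n=\ell+1}^{r}(bn-b_n+1)+(r-\ell)$, and a short binomial rearrangement rewrites $(e-1)+\sum_{i\ge1}\alpha_i$ as $e-1+\binom{\ell}{2}+\sum_{n=\ell+1}^{r}(b_n-bn+n-1)$, the asserted constant term.

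Assertion~$(5)$ is immediate from Corollary~4.2 and~$(2)$: for stretched $\m$ the ring $\rmG(\m)$ is Cohen--Macaulay iff $r_{\m}=n_{\m}$, i.e.\ iff $r=\ell$, and by Corollary~6.4$(2)$ with $n=\ell+1$ the equality $r=\ell$ is equivalent to $\n^{\ell+1}=\q\n^{\ell}$, that is to $b_{\ell+1}\le b(\ell+1)-\ell=b\ell+b-\ell$. I expect the main obstacle to be the socle computation in~$(1)$: proving that the Ap\'ery set is exactly the displayed set and that precisely the $e-\ell$ elements in residues $\ell,\ell+1,\dots,e-1$ are $\le_H$-maximal is where the delicate numerical constraints on the $b_n$ genuinely enter, whereas the remaining assertions are essentially bookkeeping on top of Lemmas~6.2--6.5 and the general theory of Sections~2 and~4.
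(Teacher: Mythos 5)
Your proposal is correct, and on assertions $(2)$ and $(3)$ it coincides with the paper's own argument: $n_{\m}=\ell$ and $r_{\m}=r$ are read off from Lemma 6.2 and Corollary 6.4, and $\Lambda_{\m}$ is pinned down by combining Lemma 6.5 with the count $|\Lambda_{\m}|=r_{\m}-n_{\m}=r-\ell$ from Proposition 2.3 $(1)$. The genuine differences are in $(1)$, $(4)$, and $(5)$. For the type in $(1)$, the paper merely asserts that $[\q:\n]/\q$ is spanned by $a_1^{\ell-1},a_{\ell+1},\cdots,a_{e-1}$; your Ap\'ery-set computation is the rigorous version of this step and in fact corrects an off-by-one slip in that assertion: $a_1^{\ell-1}$ cannot lie in $\q:\n$, since $a_1^{\ell-1}\cdot a_1=a_1^{\ell}\notin\q$ by Lemma 6.2 $(1)$; the correct socle representative is $a_1^{\ell}$, i.e.\ your maximal Ap\'ery element $b\ell e+\ell$, alongside $a_{\ell+1},\cdots,a_{e-1}$ (the paper also never addresses $\mu_A(\m)$, which your length count $\mu_A(\m)=\ell_A(\n/\q\n)-\alpha_1=e-\ell+1$ supplies). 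For $(4)$, both proofs rest on the same evaluation of $\sum_{n\geq 1}\alpha_n$ via Proposition 2.3 $(2)$ and the same binomial rearrangement, but the bridge to the Hilbert function differs: the paper cites [9, Theorem 4.7] (applicable since $\depth G\geq 0=d-1$ is automatic when $d=1$) to identify $\e_1(\m)=\sum_{n\geq 0}\ell_A(\m^{n+1}/Q\m^n)$, whereas your telescoping identity $h_i=e-\alpha_i$ is elementary, avoids the citation, and produces the range $n\geq r-1$ explicitly rather than implicitly. Finally, the paper's printed proof stops after $(4)$; your deduction of $(5)$, namely $\rmG(\m)$ Cohen-Macaulay iff $r_{\m}=n_{\m}$ (Corollary 4.2) iff $\n^{\ell+1}=\q\n^{\ell}$ iff $b_{\ell+1}\leq b\ell+b-\ell$ (Corollary 6.4 $(2)$), is exactly what the paper leaves to the reader, and routing it through $r_{\m}=r$ neatly sidesteps a direct induction showing that $b_{\ell+1}\leq b\ell+b-\ell$ forces $b_n\leq bn-n+1$ for all larger $n$. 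In short, the paper's write-up is terser because it leans on the general machinery and on [9]; yours is more self-contained and makes the socle computation, the one place where the numerical hypotheses on the $b_n$ genuinely bite, actually verifiable.
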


\begin{proof}
$(1)$: Because $[\q:\n]/\q$ is spanned by $a_1^{\ell-1}$, $a_{\ell+1}, a_{\ell+2},\cdots,a_{e-1}$ as an $C/\n$-vector space, we get $\tau(A)=e-\ell$.

\noindent
$(2)$: It follows by Lemma 6.2 and Corollary 6.4.

\noindent
$(3)$: Because $W_{bn-b_n+1} \neq (0)$ for $\ell+1 \leq n \leq r$ by Lemma 6.5 and $|\Lambda_{\m}|=r_{\m}-n_{\m}=r-\ell$ by Proposition 2.3 $(1)$, we have $\Lambda_{\m}=\{bn-b_n+1 \ | \ \ell+1 \leq n \leq r \}$.

\noindent
$(4)$: We have 
\begin{eqnarray*}
 \sum_{n \geq 1}\ell_A(\m^{n+1}/Q\m^n)&=&\binom{r}{2}-\sum_{n=\ell+1}^{r}(bn-b_n+1)+r-\ell\\
&=&\binom{r}{2}-\sum_{n=\ell+1}^{r}n-\sum_{n=\ell+1}^{r}(bn-n-b_n+1)+r-\ell\\
&=&\binom{\ell}{2}+\sum_{n=\ell+1}^{r}(b_n-bn+n-1)
\end{eqnarray*}
because $$\sum_{n \geq 1} \ell_A(\m^{n+1}/Q\m^n)=\binom{r_{\m}}{2}-\sum_{n \in \Lambda_{\m}}n+|\Lambda_{\m}|$$ by Proposition 2.3 $(2)$.
Thus, since $\ell_A(\m/Q)=e-1$ and by [9, Theorem 4.7], we get the equality of the first Hilbert coefficient 
$$\e_1(\m)=\sum_{n \geq 0}\ell_A(\m^{n+1}/Q\m^n) =e-1+\binom{\ell}{2}+\sum_{n=\ell+1}^{r}(b_n-bn+n-1)$$
of $\m$.
\end{proof}

Thanks to Theorem 6.6, we can construct the following examples of stretched maximal ideal which satisfy conditions of Theorem 1.1 as follows.

\begin{cor}
Let $b, e, \ell$, and $s$ are integers with $b \geq 2$ and $2 \leq s \leq \ell \leq e-3$.
We set $b_{\ell+1}=b(\ell+1)+1-s$ and $b_{\ell+2}=(b-1)(\ell+2)+1$.
Then we have the following.
\begin{itemize}
\item[$(1)$] $n_{\m}=n_{Q}(\m)=\ell$ and $r_{\m}=r_Q(\m)=\ell+1$.
\item[$(2)$] $\Lambda_{\m}=\Lambda_Q(\m)=\{s\}$.
\item[$(3)$] $\ell_A(A/\m^{n+1})=e(n+1)-(e-1+\binom{\ell+1}{2}-s+1)$ for all $n \geq \ell$.
\item[$(4)$] $\rmG(\m)$ is not Cohen-Macaulay.
\end{itemize}
\end{cor}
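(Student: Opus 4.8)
The plan is to present Corollary 6.7 as the instance of the construction preceding Theorem 6.6 determined by the prescribed data, and then to read off the four assertions from that theorem once the invariant $r$ has been identified. First I would choose the remaining coefficients $b_n$ for $\ell+3 \le n \le e-1$ (for instance minimally, $b_n = \lceil \frac{b}{2}\rceil n + 1$) and check that the full family $\{b_n\}$ satisfies the standing inequalities imposed before Theorem 6.6: namely $\lceil \frac{b}{2}\rceil n + 1 \le b_n$, the bound $b_{\ell+1} \le b\ell + b - 1$ (which reduces at once to $s \ge 2$), and the increment condition $b_{n+1} \le b_n + \lceil \frac{b}{2}\rceil$. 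These are formal substitutions; the only one needing a moment's care is the increment condition across the step from $b_{\ell+1}$ to $b_{\ell+2}$, which I would verify directly from the two prescribed values.

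The substance of the proof is the computation $r = \ell+1$, where $r = \max(\{n < e \mid b_n > bn - n + 1\} \cup \{\ell\})$. I would show the set of ``large'' indices is exactly $\{\ell+1\}$. For $n = \ell+1$ the inequality $b_{\ell+1} > b(\ell+1) - (\ell+1) + 1$ is, after substituting $b_{\ell+1} = b(\ell+1) + 1 - s$, equivalent to $s \le \ell$, which holds by hypothesis; hence $\ell+1$ lies in the set. For $n = \ell+2$ the prescribed value gives $b_{\ell+2} = (b-1)(\ell+2) + 1 = b(\ell+2) - (\ell+2) + 1$, so $\ell+2$ sits exactly on the boundary and is excluded; and a short induction on $n$, using $b_{n+1} \le b_n + \lceil \frac{b}{2}\rceil$ together with $\lceil \frac{b}{2}\rceil \le b - 1$, propagates $b_n \le bn - n + 1$ to every $n \ge \ell+2$. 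Thus $r = \max\{\ell+1, \ell\} = \ell+1$. This borderline placement of $b_{\ell+2}$ is the decisive feature of the data and the one short obstacle, since it is precisely what forces $r_{\m} = n_{\m} + 1$ rather than $r_{\m} = n_{\m}$.

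With $r = \ell+1$ established, the four assertions are specializations of Theorem 6.6. Part (2) of that theorem gives $n_{\m} = \ell$ and $r_{\m} = r = \ell+1$, which is (1). Part (3) gives $\Lambda_{\m} = \{bn - b_n + 1 \mid \ell+1 \le n \le r\}$; since the index range collapses to $n = \ell+1$, the one-line computation $b(\ell+1) - b_{\ell+1} + 1 = s$ yields $\Lambda_{\m} = \{s\}$, which is (2). For (3) I would substitute $r = \ell+1$ into the length formula of Theorem 6.6(4): the sum $\sum_{n=\ell+1}^{r}(b_n - bn + n - 1)$ reduces to its single term $b_{\ell+1} - b(\ell+1) + \ell = \ell - s + 1$, and the binomial identity $\binom{\ell}{2} + \ell = \binom{\ell+1}{2}$ turns the constant into $e - 1 + \binom{\ell+1}{2} - s + 1$, giving the stated length for all $n \ge r - 1 = \ell$. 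Finally (4) is immediate from Theorem 6.6(5): since $r = \ell+1 \neq \ell$ (equivalently $b_{\ell+1} > b\ell + b - \ell$), the ring $\rmG(\m)$ is not Cohen-Macaulay.
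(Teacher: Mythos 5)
Your overall route is exactly the paper's: the paper offers no separate argument for Corollary 6.7 beyond the remark ``Thanks to Theorem 6.6,'' and your specialization computations are all correct. Indeed $\ell+1$ lies in the set defining $r$ precisely because $s\le\ell$; the index $\ell+2$ sits on the boundary $b_{\ell+2}=b(\ell+2)-(\ell+2)+1$ and your induction with $\lceil b/2\rceil\le b-1$ excludes all larger indices, so $r=\ell+1$; the computation $b(\ell+1)-b_{\ell+1}+1=s$ gives $\Lambda_{\m}=\{s\}$; the single term $b_{\ell+1}-b(\ell+1)+\ell=\ell-s+1$ together with $\binom{\ell}{2}+\ell=\binom{\ell+1}{2}$ gives assertion (3) for $n\ge r-1=\ell$; and $r\ne\ell$ gives (4) via Theorem 6.6 (5).

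However, the one step you explicitly deferred --- the increment condition from $b_{\ell+1}$ to $b_{\ell+2}$, which you claim ``to verify directly from the two prescribed values'' --- does not in fact verify. Computing, $b_{\ell+1}+\lceil b/2\rceil-b_{\ell+2}=\ell+2-s-\lfloor b/2\rfloor$, so the standing hypothesis $b_{\ell+2}\le b_{\ell+1}+\lceil b/2\rceil$ of the Section 6 construction is equivalent to $\lfloor b/2\rfloor\le\ell+2-s$, which is not a consequence of $b\ge 2$ and $2\le s\le\ell\le e-3$: for instance $b=6$ and $s=\ell$ violate it. This condition is not cosmetic; it is what the proofs of Lemma 6.2 and Proposition 6.3 use to show $a_1a_n\in\q\n$, hence that $C$ is stretched, so without it Theorem 6.6 cannot be invoked at all. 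To be fair, the gap is inherited from the paper: Corollary 6.7 as stated tacitly assumes the data fits the standing hypotheses of Section 6 and omits this constraint. Your proof becomes complete, and matches the paper's intent, once you add the hypothesis $\lfloor b/2\rfloor+s\le\ell+2$ (automatic for $b\le 5$, and in particular for the paper's own examples, which take $b=2$) or otherwise restrict the parameters so that the prescribed values are admissible.
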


In the end of this paper, let us introduce some concrete examples of stretched local rings which satisfy conditions of Proposition 5.2 or Theorem 5.3 in Section 5.

\begin{ex}
Let $e=6$ and $b=2$.
Then the following assertions hold true.
\begin{itemize}
\item[$(1)$] Let $\ell=3$, $b_4=5$, and $b_5=6$. 
Then $H=\langle 6,13,34,41 \rangle$ and $r=3$.
Therefore, $r_{\m}=n_{\m}=3$, $\Lambda_{\m}=\emptyset$,
$\ell_A(A/\m^{n+1})=6(n+1)-8$ for all $n \geq 2$, and $\rmG(\m)$ is Cohen-Macaulay.
\item[$(2)$] Let $\ell=2$, $b_3=b_4=5$, and $b_5=6$.
Then $H=\langle 6,13,33, 34,41 \rangle$ and $r=3$.
Therefore, $r_{\m}=3$, $n_{\m}=2$, $\Lambda_{\m}=\{2\}$, $\ell_A(A/\m^{n+1})=6(n+1)-7$ for all $n \geq 2$, and ${\rmG}(\m)$ is not Cohen-Macaulay.
\item[$(3)$] Let $\ell=4$ and $b_5=6$.
Then $H=\langle 6,13,41 \rangle$ and $r=4$.
Therefore, $r_{\m}=n_{\m}=4$, $\Lambda_{\m}=\emptyset$, $\ell_A(A/\m^{n+1})=6(n+1)-11$ for all $n \geq 3$, and $\rmG(\m)$ is Cohen-Macaulay.
\item[$(4)$] Let $\ell=3$, $b_4=7$, and $b_5=6$.
Then $H=\langle 6,13,46, 41 \rangle$ and $r=4$.
Therefore, $r_{\m}=4$, $n_{\m}=3$, $\Lambda_{\m}=\{2\}$, $\ell_A(A/\m^{n+1})=6(n+1)-10$ for all $n \geq 3$, and $\rmG(\m)$ is not Cohen-Macaulay.
\item[$(5)$] Let $\ell=3$ and $b_4=b_5=6$.
Then $H=\langle 6,13,40,41 \rangle$ and $r=4$.
Therefore, $r_{\m}=4$, $n_{\m}=3$, $\Lambda_{\m}=\{3\}$, $\ell_A(A/\m^{n+1})=6(n+1)-9$ for all $n \geq 3$, and $\rmG(\m)$ is not Cohen-Macaulay.
\item[$(6)$] Let $\ell=2$, $b_3=5$, and $b_4=b_5=6$.
Then $H=\langle 6,13,33,40,41 \rangle$ and $r=4$.
Therefore, $r_{\m}=4$, $n_{\m}=2$, $\Lambda_{\m}=\{2,3\}$, $\ell_A(A/\m^{n+1})=6(n+1)-8$ for all $n \geq 3$, and $\rmG(\m)$ is not Cohen-Macaulay.
\end{itemize}
\end{ex}

%%%%%%%%%%%%%%%%%%%%%%%%%%%%%%%%%%%%%%%%%%%%%%%%%%%%%%%%%%%%%%%%%%%%%%%%%%%%%%%%%%%%%%%%%%%%%%%%%%%%%%%%%%%%%%%%%%%%%%%%%%%%%%%%%%%%%%%%%%%%%%%%%%%%%%%%%%%%%%%%%%%%%%%%%%%%%%%%%%%%%%%%%%%%%%%%%%%%%%%%%%%%%%%%%%%%%%%%%%%%%%%%%%%%%%%%%%%%%%%%%%%%%%%%%%%%%%%%%%%%%%%%%%%%%%%%%%%%%%%%%%%%%%%%%%%%%%%%%%%%%%%%%%%%%%%%%%%%%%%%%%%%%%%%%%%%%%%%%%%%%%%%%%%%%%%%%%%%%%%%%%%%%%%%%%%%%%%%%%%%%%%%%%%%%%%%%%%%%%%%

\addcontentsline{toc}{section}{references}


\begin{thebibliography}{99}

\bibitem{A}[A] S. S. Abhyankar, {\it Local rings of high embedding dimension}. Amer. J. Math. $\bf{89}$ (1967) 1073--1077.

\bibitem{BH} W. Bruns and J. Herzog, {\it Cohen-Macaulay rings}, Cambridge Stud. Adv. Math., vol. {\bf 39}, Cambridge Univ. Press, Cambridge, (1993).

\bibitem{C} A. Corso, \textit{Sally modules of m-primary ideals in local rings}, Comm. Algebra, ${\bf 37}$ (2009) 4503--4515.

\bibitem{CPV} A. Corso, C. Polini, and W. V. Vasconcelos, \textit{Multiplicity of the special fiber blowups}, Math. Proc. Camb. Phil. Soc., $\bf{140}$ (2006) 207--219.

\bibitem{CPVP} A. Corso, C. Polini, and M. Vaz Pinto, \textit{Sally modules and associated graded rings,} Comm. Algebra, ${\bf 26}$ (8) (1998) 2689--2708.

\bibitem{GNO} S. Goto, K. Nishida, and K. Ozeki, \textit{Sally modules of rank one}, Michigan Math. J. $\bf57$ (2008) 359--381.

\bibitem{H1} S. Huckaba, {\it A $d$-dimensional extension of a lemma of Huneke's and formulas for the Hilbert coefficients}, Proc. AMS. ${\bf 124}$ (1996) 1393--1401.

\bibitem{H2} S. Huckaba, {\it On associated graded rings having almost maximal depth}, Comm, Algebra ${\bf 26}$ (3) (1998) 967--976.

\bibitem{HM} S. Huckaba and T. Marley, \textit{Hilbert coefficients and the depth of associated graded rings}, J. London Math. Soc. (2) $\bf56$ (1997) 64--76.

\bibitem{JN} A. V. Jayanthan and R. Nanduri, {\it On the depth of fiber cones of stretched $\m$-primary ideals}, Indian J. Pure Appl. Math. ${\bf 45}$ (2014) 925--942.

\bibitem{MX} P. Mantero and Y. Xie, {\it Generalized stretched ideals and Sally's conjecture}, J. Pure and Appl. Algebra, 220 (2015) 1157--1177.

\bibitem{M} H. Matsumura, {\it Commutative Ring Theory}, Cambridge Stud. Adv. Math., vol. ${\bf 8}$, Cambridge Univ. Press, Cambridge, (1989).

\bibitem{O1} K. Ozeki, {\it The equality of Elias-Valla and the associated graded ring of maximal ideals}, J. Pure and Appl. Algebra, 216 (2012) 1306--1317.

\bibitem{O2} K. Ozeki, {\it The structure of Sally modules and Buchsbaumness of associated graded rings}, Nagoya Math. J., 212 (2013) 97--138.

\bibitem{RV1} M. E. Rossi and G. Valla, {\it A conjecture of J. Sally}, Comm. Algebra, (13) {\bf 24} (1996), 4249–-4261.

\bibitem{RV2} M. E. Rossi and G. Valla, {\it Cohen Macaulay local rings of embedding dimension e+d-3}, J. London Math. Soc., ${\bf 80}$  (2000), 107--126.

\bibitem{RV3} M. E. Rossi and G. Valla \textit{Stretched $\m$-primary ideals}, Beitr\"{a}ge Algebra und Geometrie Contributions to Algebra and Geometry, ${\bf 42}$ (2001) 103--122. 

\bibitem{18} J. D. Sally, \textit{On the associated graded ring of a local Cohen-Macaulay ring}, J. Math. Kyoto Univ., ${\bf 17}$ (1977) 19--21.

\bibitem{S2} J. D. Sally, {\it Stretched Gorenstein rings}, J. Lond. Math. Soc. $(2)$, {\bf 20} (1979) 19--26.

\bibitem{S3} J. D. Sally, {\it Cohen-Macaulay local rings of embedding dimension e+d-2}, J. Algebra ${\bf 83}$ (1983) 393-408.

\bibitem{V} W. V. Vasconcelos, \textit{Hilbert Functions, Analytic Spread, and Koszul Homology}, Contemporary Mathematics, Vol $\bf159$ (1994) 410--422.

\bibitem{VP} M. Vaz Pinto, \textit{Hilbert functions and Sally modules}, J. Algebra, $\bf192$ (1996) 504--523.

\bibitem{W} H.-J. Wang, {\it On Cohen-Macaulay local rings with embedding dimension e+d-2} J. Algebra {\bf 190} (1997), 226--240.


\end{thebibliography}
\end{document}